\documentclass[twoside,a4paper,reqno,11pt]{amsart} 
\usepackage[top=30mm,right=30mm,bottom=30mm,left=30mm]{geometry}
\usepackage[utf8]{inputenc}
\usepackage{amsthm}
\usepackage{amsfonts}
\usepackage{amsmath}
\usepackage{ amssymb }
\usepackage{todonotes}
\usepackage{hyperref} 
\usepackage{enumerate}%
\usepackage{comment}
\usepackage[foot]{amsaddr}
\newtheorem{theorem}{Theorem}
\newtheorem{proposition}[theorem]{Proposition}
\newtheorem{corollary}[theorem]{Corollary}
\newtheorem*{theorem*}{Theorem}
\newtheorem{lemma}[theorem]{Lemma}
\theoremstyle{definition}
\newtheorem{definition}[theorem]{Definition}
\newtheorem*{definition*}{Definition}
\newtheorem*{proposition*}{Proposition}
\newtheorem{question}[theorem]{Question}
\theoremstyle{remark}
\newtheorem{remark}[theorem]{Remark}
\newtheorem{example}[theorem]{Example}
\numberwithin{equation}{section}

\theoremstyle{plain}

\newenvironment{manualtheorem}[1]{%
  \manualtheoreminner
}{\endmanualtheoreminner}

\subjclass[2010]{Primary: 20D15, Secondary: 20D}

\title{Quasi-powerful $p$-groups}
\author{James Williams}
\address{J. Williams, School of Mathematics, University of Bristol, Bristol BS8 1UG, UK}
\email{j.l.i.williams@bristol.ac.uk}
\date{\today}

\begin{document}

\maketitle

\begin{abstract}
 In this paper we introduce the notion of a quasi-powerful $p$-group for odd primes $p$. These are the finite $p$-groups $G$ such that $G/Z(G)$ is powerful in the sense of Lubotzky and Mann. We show that this large family of groups shares many of the same properties as powerful $p$-groups. For example, we show that they have a regular power structure, and we generalise a result of Fern\'andez-Alcober on the order of commutators in powerful $p$-groups to this larger family of groups. We also obtain a bound on the number of generators of a subgroup of a quasi-powerful $p$-group, expressed in terms of the number of generators of the group. We give an infinite family of examples which demonstrates this bound is close to best possible.
\end{abstract}

\section{Introduction}

 It can be said that the modern study of finite $p$-groups began with the groundbreaking paper of P. Hall, published in  1933 \cite{phallcontribtothetheoryofgroupsofprimepowerorder}. In this paper Hall introduced the notion of a \textit{regular} $p$-group (see Definition \ref{defn of regular p group}) and he showed that these groups have desirable properties and a theory which in some sense parallels that of abelian groups. The study of families of finite $p$-groups with certain desirable properties continues to this day.

The \textit{powerful} $p$-groups, introduced by Lubotzky and Mann in \cite{LUBOTZKY1987484}, are another well-studied family of groups with abelian-like properties. Powerful $p$-groups also have many abelian-like properties. On the one hand they can be thought of as very similar to abelian groups, but on the other hand they can be thought of as close to a typical $p$-group (see Remark \ref{abelian likeness of powerful $p$-groups}). Given this, it is not surprising that the theory of powerful $p$-groups has found many applications, as problems concerning typical $p$-groups can often be reduced to questions about powerful $p$-groups. Perhaps the most widely celebrated application  is Shalev's proof of the coclass conjectures \cite{Shalev1994}. However the impact of powerful $p$-groups  is extensive and even stretches beyond finite $p$-groups (for more applications, see Remark \ref{end notes further references}). \par 
 
 Given the widespread usefulness  of powerful $p$-groups, it is natural to seek a larger family of groups with similar properties.  With this goal in mind, we introduce \textit{quasi-powerful} $p$-groups, and we extend many of the remarkable properties of powerful $p$-groups to this family.
 \begin{definition*}
 Let $p$ be an odd prime. We say that a finite $p$-group $G$ is  \textit{quasi-powerful} if $G/Z(G)$ is a powerful $p$-group.
 \end{definition*}
 This family is larger than that of powerful $p$-groups. For instance it contains all powerful $p$-groups and also all $p$-groups of nilpotency class $2$, and therefore unifies two large families of groups which are known to have abelian-like properties. Informally we can think of groups in this family as being close to powerful $p$-groups; in a powerful group any commutator is equal to a $p$th power in the group, whereas in a quasi-powerful group any commutator is equal to the product of a $p$th power and an element in the centre. It seems reasonable to expect that many properties of powerful $p$-groups can be adapted to this family of groups and in this paper we show this to be the case. It is our hope that quasi-powerful $p$-groups will be useful as a tool in inductive arguments and reductions. 
 
 In order to state our main results, we need to recall some notation and terminology. 
 Let $G$ be a finite $p$-group.  The $i$th Omega subgroup of $G$ is defined by
 $$ \Omega_{i}(G)=\langle g \in G \mid g^{p^{i}}=1\rangle. $$
 Notice that this coincides with the set of elements whose order divides $p^{i}$ if and only if for any two elements  $a,b \in G$ each of order at most $p^{i}$ we have that the order of the product $ab$ is at most $p^{i}$. Another equivalent formulation of this is that $\exp \Omega_{i}(G) \leq p^{i}$. This is clearly true for abelian $p$-groups, however this is not true for $p$-groups in general. For example, $\Omega_{1}(D_{8})$ contains an element of order $4$.
 
 The $i$th Agemo subgroup of $G$ is defined by $$ G^{p^{i}}=\langle g^{p^{i}} \mid g \in G \rangle. $$
 Sometimes this subgroup is denoted as $\mho_{i}(G)$. Note that the set of $p^{i}$th powers need not coincide with the group it generates. However, in the abelian setting they do coincide. 
 
 As the terminology and notation indicates, the Omega and Agemo subgroups are in some sense dual to each other. Indeed, for an abelian $p$-group we have that $|G:G^{p^{i}}|=|\Omega_{i}(G)|$ for all integers $i \geq 0$. 
 
These ideas motivate the notion of a \textit{regular power structure}. 
 
\begin{definition*} 
A finite $p$-group $G$ has a \textit{regular power structure} if the following three conditions hold for all positive integers $i$:
\begin{align}
 G^{p^{i}} &=\{g^{p^{i}} \mid g \in G \}. \label{eqn pth power} \\
      \Omega_{i}(G) &= \{g\in G \mid o(g) \leq p^{i} \}. \label{eqn omega conditon} \\
 |G:G^{p^{i}}| &= | \Omega_{i}(G)| \label{eqn index condition}.
\end{align}
\end{definition*}
\vspace{2mm}
We have seen above that abelian $p$-groups have a regular power structure. P. Hall showed in \cite{phallcontribtothetheoryofgroupsofprimepowerorder} that regular $p$-groups have a regular power structure. 
For odd primes $p$, powerful $p$-groups  also have a regular power structure - the first property (\ref{eqn pth power}) was established in \cite{LUBOTZKY1987484}, with the latter two properties (\ref{eqn omega conditon}) and (\ref{eqn index condition}) proved by Wilson in \cite{L2002} (using a result of  H\'{e}thelyi and  L\'{e}vai \cite{HETHELYI20031}). An alternate and shorter proof of these last two facts is given by  Fern\'andez-Alcober in  \cite{Fernandez-Alcober2007}. Another independent proof that (\ref{eqn index condition}) holds in powerful $p$-groups is given by  Mazur in \cite{Mazur2007}.

In \cite{GONZALEZSANCHEZ2004193}  Gonz\'{a}lez-S\'{a}nchez and  Jaikin-Zapirain introduce  the family of  \textit{potent} $p$-groups (we recall this  notion in Definition \ref{defn potent pgroup}), and they show that this family of groups  has a regular power structure (this property is called  \textit{power abelian} in \cite{GONZALEZSANCHEZ2004193}). Hence we see there is a sustained and significant interest not only in finding families of finite $p$-groups with a regular power structure, but also in exploring different approaches to the problem. 

Our first main result reveals that quasi-powerful $p$-groups have a regular power structure. 
\begin{theorem} \label{theorem intro regular pow struc and 3 points}
Let $p$ be an odd prime, let $G$ be a quasi-powerful $p$-group and let $i$ be a non-negative integer.
\begin{enumerate}[(i)]
    \item If $a,b \in G$ both have order at most $p^{i}$ then their product $ab$ has order at most $p^{i}$. \label{thm intro: product of two elements of order at most p^i has order at most p^i}
    \item The group generated by the $p^{i}$th powers coincides with the set of $p^{i}$th powers. \label{theorem letter group generated by pith powers coincides with set of powers}
    \item  $|G:G^{p^{i}}| = | \Omega_{i}(G)|$.  \label{theorem letter regular power structure last condition on order of G}
\end{enumerate}
In particular, $G$ has a regular power structure.
\end{theorem}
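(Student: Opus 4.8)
The plan is to derive everything from the powerfulness of $\bar G := G/Z(G)$, for which the three conditions of a regular power structure already hold (valid since $p$ is odd, by the cited results of Wilson and Fern\'andez-Alcober), and then to transfer these properties up to $G$ by controlling the central error terms. A useful preliminary is that the class of quasi-powerful groups is closed under quotients: if $N \trianglelefteq G$ then $Z(G)N/N \le Z(G/N)$, so $(G/N)/Z(G/N)$ is a quotient of $G/Z(G)N$, which is itself a quotient of the powerful group $G/Z(G)$; as powerfulness passes to quotients, $(G/N)/Z(G/N)$ is powerful and $G/N$ is quasi-powerful. This licenses induction on $|G|$, factoring out a central subgroup of order $p$.

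I would prove part (\ref{thm intro: product of two elements of order at most p^i has order at most p^i}) first, since it yields condition (\ref{eqn omega conditon}): once the set $\{g \in G \mid o(g) \le p^i\}$ is shown to be closed under multiplication it is a subgroup (closure under inverses being automatic), and it then coincides with $\Omega_i(G)$. So suppose $a,b \in G$ with $a^{p^i}=b^{p^i}=1$. Passing to $\bar G$, the images $\bar a, \bar b$ have order at most $p^i$, and since $\bar G$ is powerful with a regular power structure we get $(\overline{ab})^{p^i}=1$, that is $(ab)^{p^i} \in Z(G)$. Factoring out a central subgroup of order $p$ and invoking the inductive hypothesis reduces us to the case $(ab)^{p^i} \in N$ with $|N| = p$; the task is then to show this central element is trivial.

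For that crux I would expand $(ab)^{p^i}$ by the Hall--Petrescu collection formula,
$$(ab)^{p^i} = a^{p^i} b^{p^i} \prod_{k \ge 2} c_k^{\binom{p^i}{k}}, \qquad c_k \in \gamma_k(\langle a,b\rangle),$$
so that, using $a^{p^i}=b^{p^i}=1$, the central element $(ab)^{p^i}$ equals $\prod_{k\ge 2} c_k^{\binom{p^i}{k}}$. Here I would combine two ingredients: the powerful quotient satisfies $\gamma_k(\bar G) \le \bar G^{p^{k-1}}$, and $v_p\!\left(\binom{p^i}{k}\right) = i - v_p(k)$ for $1 \le k \le p^i$. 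Since $p^{v_p(k)} \le k$ forces $v_p(k) \le k-1$, every term $c_k^{\binom{p^i}{k}}$ is driven, modulo $Z(G)$, into a sufficiently high term of the $p$-power filtration of $\bar G$; tracking the residual central contribution of each term against this filtration and showing that the order-$p$ obstruction must vanish is the main obstacle and the technical heart of the argument.

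Finally I would deduce (\ref{theorem letter group generated by pith powers coincides with set of powers}) and (\ref{theorem letter regular power structure last condition on order of G}). For (\ref{theorem letter group generated by pith powers coincides with set of powers}) the nontrivial inclusion is that the set of $p^i$-th powers is closed under multiplication: given $x^{p^i}, y^{p^i}$, the powerful structure of $\bar G$ furnishes $k$ with $x^{p^i}y^{p^i} = k^{p^i} z$ for some $z \in Z(G)$, and one absorbs the central correction $z$ by the same Hall--Petrescu and induction technique as above. For (\ref{theorem letter regular power structure last condition on order of G}) I would analyse the $p^i$-th power map $g \mapsto g^{p^i}$, whose image is $G^{p^i}$ by (\ref{theorem letter group generated by pith powers coincides with set of powers}) and whose fibre over the identity is exactly $\Omega_i(G)$ by (\ref{thm intro: product of two elements of order at most p^i has order at most p^i}); establishing that all fibres are equinumerous, equivalently that $|\Omega_i(G):\Omega_{i-1}(G)| = |G^{p^{i-1}}:G^{p^i}|$ (which telescopes to the claimed identity $|G:G^{p^i}| = |\Omega_i(G)|$), again rests on the central-correction control developed for part (\ref{thm intro: product of two elements of order at most p^i has order at most p^i}). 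In this way all three parts reduce to the powerful case of $G/Z(G)$ together with a single recurring difficulty, namely the management of the central error terms produced by commutator collection.
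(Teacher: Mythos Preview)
Your proposal has a genuine gap at the point you yourself flag as ``the main obstacle and the technical heart of the argument.'' You reduce part (i) to showing that a central element $(ab)^{p^i}$ of order at most $p$ is actually trivial, and you propose to do this via Hall--Petrescu together with the filtration $\gamma_k(\bar G)\le \bar G^{p^{k-1}}$ in the powerful quotient. But this filtration information is useless here: you already know $(ab)^{p^i}\in Z(G)$, so modulo $Z(G)$ every term vanishes trivially. The entire difficulty lives in the centre, and facts about $\bar G=G/Z(G)$ alone cannot see it. You never say how to bound the orders of the central residues, and there is no evident way to do so from your ingredients.

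The paper's route is structurally different: rather than lifting properties from $\bar G$, it proves properties of $G$ itself. The key step is that $H:=G^pZ(G)$ is powerfully embedded in $G$ (Proposition \ref{H is powerfully embedded in G}), so for every $g\in G$ the subgroup $\langle g,H\rangle$ is powerful. This yields the commutator order bound $o([x,y])\le o(y)$ inside $G$ (Theorem \ref{Theorem letter. Quasi powerful order comm less than order elems}), from which $\Omega_1(G)$ has nilpotency class at most $3$. For $p>3$ this makes $\Omega_1(G)$ regular and the result follows; the case $p=3$ needs an explicit computation when the class is at most $3$ and otherwise a reduction via the Gonz\'alez-S\'anchez--Jaikin-Zapirain dichotomy (Theorem \ref{Theorem on potent p-groups}). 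None of this appears in your sketch, and it is precisely this commutator-order control in $G$ (not in $\bar G$) that replaces your missing ``central-correction control.''

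The same gap recurs in your treatment of (ii): writing $x^{p^i}y^{p^i}=k^{p^i}z$ with $z\in Z(G)$, there is no mechanism to absorb an arbitrary central $z$ into a $p^i$-th power. The paper avoids this by showing that the Hall--Petrescu error lies in $H^p$ (using that $H$ is powerfully embedded), then working inside the powerful group $\langle ab,H\rangle$; it further proves that $G^p$ is itself powerful, which bootstraps to all $i\ge 1$. Your proposal for (iii) is reasonable in outline, but since it rests on the unestablished ``central-correction control'' it inherits the same gap.
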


We will show that if $p>3$, then every quasi-powerful $p$-group is potent; thus for $p>3$ the fact they have a regular power structure follows from \cite{GONZALEZSANCHEZ2004193}. However all results given in this paper will be proved independently of this fact and moreover for us the difficult and most interesting case is $p=3$ and the quasi-powerful $3$-groups need not be potent (see Example \ref{example of a quasipowerful group}). 

It is well known that if $G$ is a powerful $p$-group then $G^{p}$ is also powerful. In fact, this is true for all known families of groups satisfying property (\ref{eqn pth power}). It is an open question if the same condition holds for any finite $p$-group satisfying (\ref{eqn pth power}). This question was posed by Wilson in \cite{L2002} and he noted that an affirmative answer to this question would  answer  a question of Shalev \cite[Problem 13]{Shalev1995} (see Question \ref{question of shalev}). Since quasi-powerful $p$-groups satisfy (\ref{eqn pth power}),  Wilson's question provides additional motivation for our next result. 

\begin{theorem}
\label{theorem letter group generated by pith powers is powerful pgroup}
Let $G$ be a quasi-powerful $p$-group. Then $G^{p^{i}}$ is a powerful $p$-group for all $i \geq 1$.
\end{theorem}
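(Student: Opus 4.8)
The plan is to reduce to the case $i=1$ and then remove a central error term by inducting on $|G|$. First, by Theorem~\ref{theorem intro regular pow struc and 3 points} the group $G$ has a regular power structure, so every element of $G^{p^i}$ is a genuine $p^i$-th power; consequently $(G^{p^i})^p = G^{p^{i+1}}$, and proving that $G^{p^i}$ is powerful amounts to showing $(G^{p^i})' \le G^{p^{i+1}}$. Moreover, again using the regular power structure, $G^{p^i} = (G^p)^{p^{i-1}}$, so once we know that $G^p$ is powerful the classical fact that power subgroups of a powerful $p$-group are again powerful (Lubotzky--Mann) shows $G^{p^i}=(G^p)^{p^{i-1}}$ is powerful for every $i \ge 1$. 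Thus it suffices to prove that $G^p$ is powerful, i.e.\ that $(G^p)' \le G^{p^2}$.

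Next I would obtain the bound modulo the centre. Write $Z=Z(G)$ and $\bar G = G/Z$, which is powerful by hypothesis. Since $\bar G$ is powerful, so is $\bar G^p$, whence $(\bar G^p)' \le (\bar G^p)^p = \bar G^{p^2}$. As $\bar G^p = G^pZ/Z$ and commutators and $p$-th powers are unaffected by the central factor, translating this back gives $(G^p)' Z \le G^{p^2} Z$, that is, $(G^p)' \le G^{p^2}Z$. A direct Hall--Petrescu collection of $[g^p,h^p]$ yields the same conclusion: the leading term $[g,h]^{p^2}$ lies in $G^{p^2}$ because $[g,h]\in G^pZ$ and both $(G^p)^{p^2}$ and $Z^{p^2}$ lie in $G^{p^2}$, while every correction term lands in $\gamma_3(G)\le G^{p^2}Z$.

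The main obstacle is eliminating the central factor $Z$: quasi-powerfulness only controls the commutator structure modulo the centre, and in general $Z \not\le G^{p^2}$, so the central error does not vanish for free. To handle it I would induct on $|G|$. For any central subgroup $N$ of order $p$, the quotient $G/N$ is again quasi-powerful, because $(G/N)/Z(G/N)$ is a quotient of $(G/N)/(Z/N)\cong \bar G$ and quotients of powerful groups are powerful; by the inductive hypothesis $(G/N)^p = G^pN/N$ is powerful, which unwinds to $(G^p)' \le G^{p^2}N$. If $\Omega_1(Z) \le G^{p^2}$ then every such $N$ lies in $G^{p^2}$ and we are done immediately. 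If the image of $\Omega_1(Z)$ in $G/G^{p^2}$ has rank at least $2$, then two central subgroups $N_1 \ne N_2$ of order $p$ have distinct order-$p$ images, whose intersection in $G/G^{p^2}$ is trivial, so $(G^p)' \le G^{p^2}N_1 \cap G^{p^2}N_2 = G^{p^2}$.

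The one delicate case is when this image is cyclic of order $p$; here the mod-$Z$ and mod-$N$ bounds only confine $(G^p)'$ to a single central line of order $p$ over $G^{p^2}$, and the induction stalls. To close it I would invoke the sharper order estimates for commutators in quasi-powerful groups (the generalisation of Fern\'andez-Alcober's theorem established earlier in the paper) to show that the surviving central element is in fact a $p^2$-th power, hence already in $G^{p^2}$. This final step, controlling the precise order and power-nature of the central part of $[g^p,h^p]$, is where I expect the real work to lie, since everything before it is formal manipulation with the powerful quotient and the regular power structure.
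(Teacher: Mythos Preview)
Your reduction to $i=1$ matches the paper's, though note a small circularity in your phrasing: you invoke the full regular power structure (Theorem~\ref{theorem intro regular pow struc and 3 points}(\ref{theorem letter group generated by pith powers coincides with set of powers})) to get $G^{p^i}=(G^p)^{p^{i-1}}$, but in the paper that part is proved \emph{using} the fact that $G^p$ is powerful. This is harmless, since the reduction only needs Theorem~\ref{G^p is set of pth powers} ($G^p$ equals the set of $p$th powers), which is independent.

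For the core step ($G^p$ powerful), the paper takes a different and more direct route than your induction-and-case-split. It first reduces via Lemma~\ref{K is pe in G if  [K,G] lew K^p[K,G,G]} to assume $[G^p,G^p,G^p]=1$, and by quotienting out $(G^p)^{p^2}$ to assume $\exp G\le p^3$. Then Hall's formula (\ref{eqn halls commutator expansion formula [x,y]^p^n}) gives $[x^p,y^p]\equiv [x,y^p]^p$ modulo $\gamma_2(M)^p\gamma_p(M)$ with $M=\langle x,[x,y^p]\rangle$; writing $[x,y^p]=g^{p^2}z$ and using Theorem~\ref{Theorem letter. Quasi powerful order comm less than order elems} together with Theorem~\ref{theorem intro regular pow struc and 3 points}(\ref{thm intro: product of two elements of order at most p^i has order at most p^i}) kills both error terms, so $[x^p,y^p]=[x,y^p]^p\in (G^p)^p$. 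No induction on $|G|$ is needed.

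Your inductive route is viable, but the ``delicate case'' you flag as the real work is in fact almost empty, and the key observation is missing from your sketch. If $G^{p^2}\neq 1$ then, being a nontrivial normal subgroup of a $p$-group, $G^{p^2}$ meets $Z(G)$ nontrivially; hence some central $N$ of order $p$ already lies in $G^{p^2}$, and your inductive bound $(G^p)'\le G^{p^2}N=G^{p^2}$ finishes at once. So the only genuinely hard case is $G^{p^2}=1$, i.e.\ $\exp G\le p^2$. Now Theorem~\ref{ theorem letter more detailed commutator order bound} with $i=2$, $j=k=1$ gives $o([x^p,y^p])\le p^{0}=1$ for all $x,y\in G$, so $(G^p)'=1$. (There is no circularity: the proof of Theorem~\ref{ theorem letter more detailed commutator order bound} uses only Theorem~\ref{theorem intro regular pow struc and 3 points}(\ref{thm intro: product of two elements of order at most p^i has order at most p^i}), not the present theorem.) So your approach works, but the step you expected to be hardest is a two-line argument once you notice that $G^{p^2}\cap Z(G)\neq 1$ whenever $G^{p^2}\neq 1$.
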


To understand finite $p$-groups, one needs to study the interactions between powers and commutators. For example, the defining properties of all of the families of groups described above all involve  some condition on commutators in terms of groups of $p$th powers. Theorem 1 in \cite{Fernandez-Alcober2007}  gives bounds on the order of a commutator in a powerful $p$-group in terms of the components within the commutator (see Theorem \ref{ Gustavos result for powerful p-groups}). These results turn out to be very useful when working with $p$th powers in powerful $p$-groups, and played a key role in \cite{Williams2019} and \cite{Williams2018} to show that certain normal subgroups of powerful $p$-groups are \textit{powerfully nilpotent} (see Section \ref{subsection powerfully nilpotent groups}).

We generalise these  bounds on commutators to quasi-powerful $p$-groups. The first result shows that in a quasi-powerful $p$-group the order of a commutator is bounded by the order of its components. 

\begin{theorem}
\label{Theorem letter. Quasi powerful order comm less than order elems}
Let $G$ be a quasi-powerful $p$-group and $x,y \in G$. Then $o([x,y]) \leq o(y)$.
\end{theorem}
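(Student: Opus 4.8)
The plan is to derive the bound as a quick consequence of part (i) of Theorem~\ref{theorem intro regular pow struc and 3 points}, which we may assume. Set $o(y)=p^{a}$. Part (i) asserts exactly that the set $S=\{g\in G: o(g)\le p^{a}\}$ is closed under multiplication; since $S$ contains the identity and is closed under inversion (because $o(g^{-1})=o(g)$), it is a subgroup of $G$, namely $\Omega_{a}(G)$. This is the only structural input I need.

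The key elementary observation is that a commutator can always be written as a product of two elements whose orders equal $o(y)$. With the standard convention $[x,y]=x^{-1}y^{-1}xy$ one has
\[
[x,y]=(x^{-1}y^{-1}x)\,y=(y^{-1})^{x}\,y .
\]
The factor $(y^{-1})^{x}$ is conjugate to $y^{-1}$, so $o\big((y^{-1})^{x}\big)=o(y^{-1})=o(y)=p^{a}$, and of course $o(y)=p^{a}$ as well. Hence both factors lie in $\Omega_{a}(G)$. Since $\Omega_{a}(G)$ is a subgroup, their product $[x,y]$ lies in $\Omega_{a}(G)$ too, and therefore $o([x,y])\le p^{a}=o(y)$, which is the claim.

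The point I would stress is that there is essentially no obstacle left at this stage: all the difficulty has been absorbed into Theorem~\ref{theorem intro regular pow struc and 3 points}, and once the elements of order at most $p^{a}$ are known to form a subgroup the commutator estimate is immediate. It is worth recording why the more obvious route is less convenient. One could instead pass to the powerful quotient $G/Z(G)$ and apply the known bound for powerful $p$-groups (Theorem~\ref{ Gustavos result for powerful p-groups}); but this only gives $[x,y]^{o(y)}\in Z(G)$, leaving one to bound the order of a central $p$-power, and it is precisely this residual difficulty that the $\Omega$-subgroup argument avoids.
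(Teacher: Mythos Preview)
Your argument is valid as a piece of mathematics, but it is circular within the paper's logical structure. The proof of Theorem~\ref{theorem intro regular pow struc and 3 points}(\ref{thm intro: product of two elements of order at most p^i has order at most p^i}) for $p=3$ passes through Lemma~\ref{ 3-group with class at most 3 has exp omega as 3}, and that lemma explicitly invokes Theorem~\ref{Theorem letter. Quasi powerful order comm less than order elems}: one needs the bound ``any commutator containing $a$ or $b$ has order at most $3$'' to collapse the Hall expansion of $(ab)^{3}$. So you may \emph{not} assume Theorem~\ref{theorem intro regular pow struc and 3 points}(\ref{thm intro: product of two elements of order at most p^i has order at most p^i}) at this point; the commutator bound has to be established first, by independent means. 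Your remark that ``all the difficulty has been absorbed into Theorem~\ref{theorem intro regular pow struc and 3 points}'' gets the dependency exactly backwards.

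The paper uses the very same factorisation $[g,h]=g^{-1}\cdot g^{h}$ that you wrote down, but it applies the ``product of two elements of order $\le p^{i}$'' step inside a \emph{powerful subgroup} of $G$ rather than in $G$ itself. Since $H=G^{p}Z(G)$ is powerfully embedded in $G$ (Proposition~\ref{H is powerfully embedded in G}), the group $W=\langle g,H\rangle$ is powerful by Lemma~\ref{powerfully embedded group with an element is powerful}; and because $G'\le H\le W$, the conjugate $g^{h}=g[g,h]$ also lies in $W$. Now $g^{-1}$ and $g^{h}$ are two elements of order at most $p^{i}$ in the powerful group $W$, where the $\Omega$-exponent bound is already known (Theorem~\ref{ Gustavos result for powerful p-groups}(iii)), and the conclusion follows without any forward reference.

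Your closing paragraph posits that the only alternative route is to pass to the quotient $G/Z(G)$. The paper's approach is a third option you did not consider --- descending to a powerful subgroup inside $G$ --- and it is precisely this that removes the residual difficulty you identify while keeping the argument non-circular.
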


The next result allows us to say more if we know how elements can be expressed as $p$th powers. 

\begin{theorem}
\label{ theorem letter more detailed commutator order bound}
Let $G$ be a quasi-powerful $p$-group. If $x,y \in G$ are such that $o(x) \leq p^{i+1}$ and $o(y) \leq p^{i}$, then $o([x^{p^{j}}, y^{p^{k}}]) \leq p^{i-j-k}$ for all $j, k \geq 0.$
\end{theorem}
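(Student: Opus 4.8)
The plan is to translate the statement, via the regular power structure of Theorem \ref{theorem intro regular pow struc and 3 points}, into a single multiplicative identity and then establish that by a commutator-collection argument. Writing $m=i-j-k$, property (\ref{eqn omega conditon}) says that $o([x^{p^{j}},y^{p^{k}}]) \le p^{m}$ is equivalent to $[x^{p^{j}},y^{p^{k}}] \in \Omega_{m}(G)$, that is, to $[x^{p^{j}},y^{p^{k}}]^{p^{m}}=1$; and property (\ref{eqn pth power}) lets me pass freely between a $p$-power $g^{p^{t}}$ and its order, so that $o(x^{p^{j}})=o(x)/p^{j}$ and $o(y^{p^{k}})=o(y)/p^{k}$ whenever these powers are non-trivial. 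First I would dispose of the degenerate ranges: if $k \ge i$ then $y^{p^{k}}=1$, and if $j \ge i+1$ then $x^{p^{j}}=1$, so in either case the commutator is trivial and there is nothing to prove. Thus I may assume $j \le i$ and $k \le i-1$, and treat the values $m<0$ as part of the same induction, where the conclusion degenerates to triviality of the commutator.

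The core is a single reduction lemma: in a quasi-powerful $p$-group, inserting one further $p$th power into a commutator divides its order by $p$, i.e. $o([x,y]) \le p^{s}$ forces $o([x^{p},y]) \le p^{s-1}$, together with the symmetric statement for the second entry obtained by applying it to $[y,x]=[x,y]^{-1}$. To prove it I would collect $[x^{p},y]=\prod_{t=0}^{p-1}[x,y]^{x^{t}}$ into the form $[x^{p},y]=[x,y]^{p}\,r$, where $r$ is a product of commutators of weight at least $3$ in $x$ and $y$. Since $G/Z(G)$ is powerful one has $[G,G] \le G^{p}Z(G)$, and iterating this gives $\gamma_{3}(G) \le G^{p^{2}}Z(G)$ and more generally $\gamma_{n}(G) \le G^{p^{n-1}}Z(G)$; hence the correction $r$ lies deep in the $p$-power series, and I would bound its order below $p^{s-1}$ using Theorem \ref{Theorem letter. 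Quasi powerful order comm less than order elems} together with the regular power structure. Combined with $o([x,y]^{p})=o([x,y])/p \le p^{s-1}$, this yields the lemma. The prime $p=3$ is the delicate point: regularity fails, the collection formula carries an extra correction term of weight $p=3$, and this term must be kept strictly within the same budget by invoking that $G^{p}$ is powerful (Theorem \ref{theorem letter group generated by pith powers is powerful pgroup}) rather than any regularity of $G$ itself.

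With the lemma in hand the theorem follows by iteration: starting from $o([x,y]) \le o(y) \le p^{i}$ (Theorem \ref{Theorem letter. Quasi powerful order comm less than order elems}), applying the first-entry form of the lemma $j$ times gives $o([x^{p^{j}},y]) \le p^{i-j}$, and then applying the second-entry form $k$ times to the pair $(x^{p^{j}},y)$ gives $o([x^{p^{j}},y^{p^{k}}]) \le p^{i-j-k}$. The subtle feature of the bookkeeping is the asymmetric hypothesis $o(x) \le p^{i+1}$ against $o(y) \le p^{i}$: a clean iteration of the lemma only ever consumes the order budget carried by the commutator itself, so I expect to phrase the argument as a simultaneous induction on $i$ in which the one extra unit in the bound on $o(x)$ is exactly the slack needed to peel the first $p$th power off $x$ without breaking the recursion. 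The main obstacle, and the part I expect to demand the most care, is proving the reduction lemma uniformly in $p$, namely bounding the weight-$\ge 3$ correction terms (and, for $p=3$, the weight-$3$ term) strictly within the remaining budget, which is precisely where the powerfulness of $G/Z(G)$ and of $G^{p}$ must be leveraged in place of the potency or regularity that $G$ itself need not possess.
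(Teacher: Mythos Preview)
Your approach is genuinely different from the paper's, and the paper's argument is dramatically simpler: it is a four-line induction on $|G|$. One passes to $\bar G = G/\Omega_{1}(G)$, where $o(\bar x)\le p^{i}$ and $o(\bar y)\le p^{i-1}$; the inductive hypothesis (with $i$ replaced by $i-1$) gives $[\bar x^{p^{j}},\bar y^{p^{k}}]^{p^{\,i-1-j-k}}=1$, hence $[x^{p^{j}},y^{p^{k}}]^{p^{\,i-1-j-k}}\in\Omega_{1}(G)$, and since $\exp\Omega_{1}(G)\le p$ by Theorem~\ref{theorem intro regular pow struc and 3 points}(\ref{thm intro: product of two elements of order at most p^i has order at most p^i}) one concludes. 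No collection, no analysis of correction terms, no special treatment of $p=3$.

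Your proposal has a real gap in the proof of the reduction lemma. You assert that the correction $r$ in $[x^{p},y]=[x,y]^{p}r$ has order at most $p^{s-1}$, but the tools you invoke do not deliver this. Writing $c=[x,y]$, Hall's formula puts the correction in $\gamma_{2}(M)^{p}\gamma_{p}(M)$ with $M=\langle x,c\rangle$. The $\gamma_{2}(M)^{p}$ part is fine: Theorem~\ref{Theorem letter. Quasi powerful order comm less than order elems} gives $o([x,c])\le o(c)\le p^{s}$, and after taking $p$th powers one lands in $\Omega_{s-1}(G)$. But for $\gamma_{p}(M)$ (and in particular for $\gamma_{3}(M)$ when $p=3$) the same application of Theorem~\ref{Theorem letter. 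Quasi powerful order comm less than order elems} to $[x,c,x]$, $[x,c,c]$, \dots\ only yields order $\le p^{s}$, one power of $p$ short. Your observation that $\gamma_{n}(G)\le G^{p^{n-1}}Z(G)$ places $r$ ``deep in the $p$-power series'', but membership in $G^{p^{2}}Z(G)$ carries no information about order relative to $s$; and invoking that $G^{p}$ is powerful does not help, because $x$ itself need not lie in $G^{p}$, so you cannot run Theorem~\ref{ Gustavos result for powerful p-groups}(ii) inside $G^{p}$ on the pair $(x,c)$. In short, the lemma as you state it --- with \emph{only} the hypothesis $o([x,y])\le p^{s}$ and no control on $o(x)$ --- is not established by your sketch, and it is not clear it is even true without tracking $o(x)$ and $o(y)$ through the argument. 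That in turn undermines the clean iteration you describe, since the natural fix (adding the hypotheses $o(x)\le p^{s+1}$, $o(y)\le p^{s}$ to the lemma) does not propagate: after one peel in $x$ you have $o(x^{p})\le p^{s}$ but still only $o(y)\le p^{s}$, so the asymmetric hypothesis needed for the next peel is lost. The paper sidesteps all of this by inducting on $|G|$ rather than on $j+k$.
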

An interesting point to note is that we apply Theorem \ref{theorem intro regular pow struc and 3 points}(\ref{thm intro: product of two elements of order at most p^i has order at most p^i}) to prove Theorem \ref{ theorem letter more detailed commutator order bound}. The argument we use is quite general and can be applied to prove an analagous result for potent $p$-groups for odd primes $p$, hence we can obtain the following theorem. \par

\begin{theorem}
\label{theorem intro detailed comm bound for potent groups}
Let $p$ be an odd prime and $G$ be a potent $p$-group. Suppose $x,y \in G$ with $o(x) \leq p^{i+1}$ and $o(y) \leq p^{i}$, then $o([x^{p^{j}},y^{p^{k}}]) \leq p^{i-j-k}$ for all $j, k \geq 0.$.
\end{theorem}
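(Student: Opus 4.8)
The plan is to run the proof of Theorem \ref{ theorem letter more detailed commutator order bound} essentially unchanged, after checking that the two structural facts it relies on are available for potent $p$-groups as well. Those facts are part (\ref{thm intro: product of two elements of order at most p^i has order at most p^i}) of Theorem \ref{theorem intro regular pow struc and 3 points} --- that $\Omega_m(G)=\{g\in G: o(g)\le p^m\}$ is a subgroup for every $m$ --- and the basic commutator bound $o([x,y])\le o(y)$ of Theorem \ref{Theorem letter. Quasi powerful order comm less than order elems}. For odd $p$ both hold for potent groups: the first because potent $p$-groups are power-abelian by \cite{GONZALEZSANCHEZ2004193}, hence have a regular power structure; and the second is the potent analogue of Theorem \ref{Theorem letter. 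Quasi powerful order comm less than order elems}, obtainable by the same argument. Note that by reversing a commutator ($[x,y]=[y,x]^{-1}$) the basic bound also yields $o([x,y])\le o(x)$, and iterating it gives $o([a,{}_\ell b])\le o([a,b])$ for every $\ell\ge 1$.

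First I would induct on $j+k$. The base case $j=k=0$ is precisely $o([x,y])\le p^i$, immediate from $o(y)\le p^i$ and the basic bound. For the inductive step, suppose $k\ge 1$ (the case $k=0$, $j\ge1$ is symmetric, peeling a power off $x$ instead). Put $a=x^{p^j}$ and $b=y^{p^{k-1}}$, and write $m=i-j-(k-1)$, so that the inductive hypothesis gives $o([a,b])\le p^m$; it then remains to prove the single-power-reduction statement
\[
o([a,b]) \le p^{m} \implies o([a,b^{p}]) \le p^{m-1}.
\]
Here I would use a Hall--Petrescu collection identity to write $[a,b^p]=[a,b]^p\, d_2^{\binom{p}{2}}\cdots d_p^{\binom{p}{p}}$, where $d_\ell\in\gamma_\ell(\langle[a,b],b\rangle)$. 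The leading factor $[a,b]^p$ has order at most $p^{m-1}$. For $2\le \ell\le p-1$ the coefficient $\binom{p}{\ell}$ is divisible by $p$, and since $d_\ell$ is a product of commutators in $[a,b]$ and $b$ it has order at most $p^m$ by the iterated basic bound (using part (\ref{thm intro: product of two elements of order at most p^i has order at most p^i}) to multiply such commutators); hence $d_\ell^{\binom{p}{\ell}}$ has order at most $p^{m-1}$. Every factor so far lies in $\Omega_{m-1}(G)$, so by part (\ref{thm intro: product of two elements of order at most p^i has order at most p^i}) of Theorem \ref{theorem intro regular pow struc and 3 points} their product does too.

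The hard part will be the single remaining factor $d_p^{\binom{p}{p}}=d_p$, whose coefficient $\binom{p}{p}=1$ carries no compensating power of $p$: a priori I only know $o(d_p)\le p^m$, whereas I need $o(d_p)\le p^{m-1}$, that is $d_p\in\Omega_{m-1}(G)$. This is exactly where the potency hypothesis (Definition \ref{defn potent pgroup}) must enter: since $d_p\in\gamma_p(G)\le\gamma_{p-1}(G)\le G^p$, the element $d_p$ is a $p$-th power, and the task is to show it is the $p$-th power of an element of order at most $p^m$, which supplies the missing drop in order. Making this precise --- controlling the order of the base of that $p$-th power rather than merely placing $d_p$ in $G^p$ --- is the crux, and it is most delicate when $p=3$, where potency reads $\gamma_2(G)\le G^3$ and the collection is shortest, so there is least room to absorb the correction. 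Once $d_p\in\Omega_{m-1}(G)$ is established, part (\ref{thm intro: product of two elements of order at most p^i has order at most p^i}) of Theorem \ref{theorem intro regular pow struc and 3 points} closes the induction and yields $o([x^{p^j},y^{p^k}])\le p^{i-j-k}$ for all $j,k\ge 0$.
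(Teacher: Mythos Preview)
Your opening sentence is exactly right in spirit: the paper obtains Theorem~\ref{theorem intro detailed comm bound for potent groups} by remarking that the proof of Theorem~\ref{ theorem letter more detailed commutator order bound} goes through verbatim for any family closed under quotients in which $\exp\Omega_1(G)\le p$, and potent $p$-groups (for odd $p$) are such a family by \cite{GONZALEZSANCHEZ2004193}. But your guess at what that proof actually is misses the mark, and this leads you into an unnecessary difficulty.

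The paper's argument does \emph{not} induct on $j+k$ and does not touch Hall--Petrescu. It inducts on $|G|$: pass to $\bar G=G/\Omega_1(G)$, which is again potent and has strictly smaller order. There $o(\bar x)\le p^{i}$ and $o(\bar y)\le p^{i-1}$, so the inductive hypothesis (with $i-1$ in place of $i$) gives $o([\bar x^{p^j},\bar y^{p^k}])\le p^{(i-1)-j-k}$. Lifting back, $[x^{p^j},y^{p^k}]^{p^{i-j-k-1}}\in\Omega_1(G)$, and since $\exp\Omega_1(G)\le p$ we conclude $o([x^{p^j},y^{p^k}])\le p^{i-j-k}$. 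That is the whole proof. In particular it never needs the basic commutator bound of Theorem~\ref{Theorem letter. Quasi powerful order comm less than order elems} for potent groups, nor any collection identity.

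By contrast, your route via $[a,b^p]=[a,b]^p d_2^{\binom{p}{2}}\cdots d_p$ leaves a genuine gap at the term $d_p$: you correctly observe that potency places $d_p\in\gamma_{p-1}(G)\le G^p$, but you need the stronger statement $d_p\in\Omega_{m-1}(G)$, and ``$d_p$ is a $p$th power'' does not by itself control the order of a preimage. You yourself flag this as the crux and do not resolve it; as written, the induction on $j+k$ does not close. The paper's induction on $|G|$ sidesteps this entirely because quotienting by $\Omega_1(G)$ simultaneously drops the order bounds on \emph{both} $x$ and $y$ by one, so the exponent saving comes for free from $\exp\Omega_1(G)\le p$ rather than from any delicate analysis of a correction term.
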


One of the most abelian-like and important properties of powerful $p$-groups is that the minimal number of generators of a subgroup is bounded by the minimal number of generators of the group. We prove an extension of this result for quasi-powerful $p$-groups, and we show that the given bound is close to best possible. Let $d(G)$ denote the minimum number of generators for $G$.

\begin{theorem}
\label{theorem intro bound on rank subgroups}
Let $G$ be a quasi-powerful $p$-group with $d(G)=r$ and let $H \leq G$. Then $d(H) \leq \frac{1}{2}r(r+3)$. 
\end{theorem}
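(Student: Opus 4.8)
The plan is to control $H$ through the powerful quotient $G/Z(G)$ and to isolate the contribution coming from the centre. Write $Z=Z(G)$ and $A=H\cap Z$, so that $A$ is central in $H$ and $H/A\cong HZ/Z\le G/Z$. Since $G/Z$ is powerful and $d(G/Z)\le d(G)=r$, the fundamental property of powerful $p$-groups that every subgroup can be generated by at most $d$ elements gives $d(HZ/Z)\le r$. Writing $\bar A=A\Phi(H)/\Phi(H)$ for the image of $A$ in the Frattini quotient, the surjection $H\twoheadrightarrow H/A$ induces a short exact sequence of $\mathbb F_p$-vector spaces
\begin{equation*}
0\longrightarrow \bar A\longrightarrow H/\Phi(H)\longrightarrow (H/A)/\Phi(H/A)\longrightarrow 0,
\end{equation*}
so that $d(H)=d(HZ/Z)+\dim_{\mathbb F_p}\bar A$. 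Thus the whole problem reduces to proving the bound $\dim_{\mathbb F_p}\bar A\le \binom{r+1}{2}=\binom{r}{2}+r$, after which $d(H)\le r+\binom{r}{2}+r=\tfrac12 r(r+3)$.

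The mechanism that should make $\bar A$ small is the quasi-powerful analogue, at the level of $H$, of the commutator--power relation. Because $HZ/Z$ is a subgroup of the powerful group $G/Z$ it is itself powerful, so $[HZ/Z,HZ/Z]\le (HZ/Z)^p$; lifting this to $G$ and intersecting with $H$ (via Dedekind's law, using $H^p\le H$) yields
\begin{equation*}
H'\le H^p(H\cap Z)=H^pA,
\end{equation*}
so that every commutator in $H$ is a $p$-th power times a central element and $\Phi(H)=H^p(A\cap\Phi(H))$. My strategy for bounding $\bar A$ is to filter the central elements of $H$ by the $\Phi$-series of the ambient group $G$ and to separate three contributions. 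First, the central elements that remain visible in $G/\Phi(G)$ contribute at most $z_1:=\dim_{\mathbb F_p}(Z\Phi(G)/\Phi(G))$ to $\dim\bar A$; but these are exactly the central generators of $G$, and a Frattini-quotient count gives $d(G/Z)=r-z_1$, whence $d(HZ/Z)\le r-z_1$. This trade-off is what produces the exact constant. Secondly, the central elements lying in the commutator layer $G'$ should contribute at most $\binom r2$, this being the number of commutators $[x_i,x_j]$ of a fixed generating set $x_1,\dots,x_r$ of $G$. Thirdly, the central elements arising from the $p$-th-power layer $G^p$ should contribute at most $r$, one for each $x_i^p$. Assembling these gives $\dim\bar A\le z_1+\binom r2+r$, and combining with $d(HZ/Z)\le r-z_1$ yields $d(H)\le (r-z_1)+(z_1+\binom r2+r)=\binom r2+2r$, as required.

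The hard part is to make this filtration of the central part rigorous, and in particular to show that no deeper layers contribute. The obstacle is the familiar mismatch between $\Phi(H)$ and $\Phi(G)\cap H$: a central element of $H$ lying deep in the $\Phi$-series of $G$ may still fail to lie in $\Phi(H)$, and, conversely, central elements realised as commutators or $p$-th powers only by using elements of $G$ outside $H$ (for instance $[x_1,x_3]$ inside a subgroup not containing $x_3$) nonetheless register in $H/\Phi(H)$. To control this I would lean on the relation $H'\le H^pA$ together with the regular power structure of Theorem \ref{theorem intro regular pow struc and 3 points}: the fact that in $H$ commutators are absorbed into $p$-th powers modulo the centre should force the filtration to terminate after the commutator and power layers, so that the three contributions above are genuinely exhaustive. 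Verifying this termination, and thereby that the central elements surviving in $H/\Phi(H)$ are accounted for by the $\le\binom r2+r$ commutators and powers of a generating set of $G$ once the $z_1$ trade-off is taken into account, is the technical heart of the argument.
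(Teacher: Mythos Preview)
Your proposal contains a genuine error and, as you yourself acknowledge, leaves the decisive step unproved.

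The error is the assertion that $HZ/Z$, being a subgroup of the powerful group $G/Z$, is itself powerful. Subgroups of powerful $p$-groups need not be powerful; this is one of the standard caveats of the theory. Consequently the relation $H'\le H^{p}A$ that you deduce from it is unjustified, and with it the identity $\Phi(H)=H^{p}(A\cap\Phi(H))$. Without these, the link you need between $\bar A$ and the commutator/power structure of $G$ is not available. Even granting the set-up, the filtration argument for $\dim_{\mathbb F_p}\bar A$ is only a heuristic: you correctly identify the obstacle (central elements of $H$ lying deep in $G$ may nonetheless survive in $H/\Phi(H)$) and label its resolution ``the technical heart of the argument'', but you do not carry it out. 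Nothing in the sketch explains why the three layers you list are exhaustive.

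The paper avoids both difficulties by a change of viewpoint that you should adopt. Rather than analysing the $H$-dependent subgroup $A=H\cap Z(G)$, one fixes once and for all the \emph{smallest} $K\le Z(G)$ with $G'\le G^{p}K$, so that $G/K$ is genuinely powerful. Then for every $H\le G$ one has $d(H)\le d(HK/K)+d(H\cap K)\le r+d(K)$, the first inequality by Theorem~\ref{thm pow p group subgroup rank bounded} applied to $G/K$ and the second because $K$ is abelian. It remains only to bound $d(K)$, a single group independent of $H$. Minimality of $K$ forces $G^{p}\cap K\le\Phi(K)$, so for this purpose one may assume $G^{p}=1$; then $G$ has class at most $2$, and the filtration $K\ge K\cap G'\ge 1$ together with the embedding $K/(K\cap G')\hookrightarrow G/G'$ and Witt's count of $\binom{r}{2}$ basic commutators gives $d(K)\le r+\binom{r}{2}=\tfrac{1}{2}r(r+1)$. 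Working with a fixed $K$ rather than with $H\cap Z(G)$ is exactly what makes the filtration tractable.
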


Furthermore, we exhibit an infinite family of examples of groups $G$ such that $d(G)=r$ but with a subgroup $H$ such that $d(H)=\frac{1}{2}r(r+1)$. Hence the bound in Theorem \ref{theorem intro bound on rank subgroups} is close to best possible.

\vspace{2mm}

We now say a few words on the layout of this paper. First, in Section \ref{section preliminiaries} we recall some preliminary results and definitions from the theory of finite $p$-groups. Next, in Section \ref{subsection basic properties of quasi-powerful $p$-groups} we introduce some basic properties of quasi-powerful $p$-groups. We include an example to demonstrate that quasi-powerful $3$-groups are a `new' family with regular power structure. The section culminates with the proof of Theorem \ref{Theorem letter. Quasi powerful order comm less than order elems}. Section \ref{section quasi-powerful pgroups have regular power structure} is split into three parts, with each part corresponding to establishing a part of Theorem \ref{theorem intro regular pow struc and 3 points}.   In Section \ref{subsection an application} we give an application of Theorem \ref{theorem intro regular pow struc and 3 points}, whereby we prove Theorem \ref{ theorem letter more detailed commutator order bound}. Our argument can be adapted to potent $p$-groups (as in Theorem \ref{theorem intro detailed comm bound for potent groups}). Our focus turns to the minimal number of generators of subgroups in Section \ref{section generators for subgroups}. In Section \ref{section on the even prime} we comment on the case when $p=2$. We give an example to show that if the definition of a quasi-powerful $p$-group for odd primes were extended to $p=2$, the groups need not have a regular power structure. 

\vspace{4mm}

\textbf{Notation:} Our notation is standard. We denote the order of $x\in G$ as $o(x)$. In keeping with \cite{Fernandez-Alcober2007}, we use the convention that if $G$ is a $p$-group and $x \in G$, then we define the meaning of the inequality $o(x) \leq p^i$ with $i<0$ to be that $x=1$. We denote the exponent of $G$ by $\exp G$. All iterated commutators are left normed. The terms of the \textit{lower central series} of $G$ are defined recursively as $\gamma_{1}(G)=G$ and $\gamma_{k+1}(G)=[\gamma_{k}(G),G]$ for integers $k\geq 1$. We use bar notation for images in a quotient group; it will always be made explicitly clear what the quotient group under consideration is. We denote the minimal number of generators of a group $G$ by $d(G)$. We denote the Frattini subgroup of $G$ by $\Phi(G)$.  

\section*{Acknowledgements}
I would like to thank Dr Tim Burness for many helpful discussions and for his detailed feedback on earlier versions of this paper. I am also very grateful to Dr Gareth Tracey for his continued encouragement with this project. 

\section{Preliminaries} \label{section preliminiaries}
\setcounter{theorem}{0}
\numberwithin{theorem}{section}
In all of what follows we shall be dealing with finite $p$-groups where $p$ is an odd prime unless explicitly stated otherwise. For the convenience of the reader, we collect here some properties of $p$-groups which shall be used in the rest of the paper. Most of the results are standard, but we draw the reader's attention to Theorem \ref{ Gustavos result for powerful p-groups} and Theorem \ref{Theorem on potent p-groups} which have appeared relatively recently in the literature.
\subsection{Powerful $p$-groups} \label{subsection powerful p group facts}
We recall from \cite{LUBOTZKY1987484} what it means for a $p$-group to be powerfully embedded and for a $p$-group to be powerful.
\begin{definition}
A subgroup $N$ of a finite $p$-group $G$ is \textit{powerfully embedded} in $G$ if $N^{p} \geq [N,G]$ (for $p=2$, if $N^{4} \geq [N,G]$). A finite $p$-group is \textit{powerful} if it is powerfully embedded in itself, that is, if $[G,G] \leq G^{p}$ (for $p=2$, if $[G,G] \leq G^{4}$).
\end{definition}
 
The following theorem demonstrates why powerful $p$-groups are so named - because they are full of $p$th powers. Theorem \ref{theorem intro regular pow struc and 3 points}(\ref{theorem letter group generated by pith powers coincides with set of powers}) and Theorem \ref{theorem letter group generated by pith powers is powerful pgroup} generalise this theorem.
\begin{theorem}[\cite{khukhro_1998}, Theorem 11.10]
\label{theorem properties of pth powers in powerful groups}
Let $G$ be a powerful $p$-group, and let $k \in \mathbb{N}$. 
\begin{enumerate}[(i)]
\item The subgroup $G^{p^{k}}$ coincides with the set $\{x^{p^{k}} | x \in G \}$ of $p^{k}$th powers of elements of $G$; in particular, $(G^{p^{i}})^{p^{j}}=G^{p^{i+j}}$ for all $i,j \in \mathbb{N}$.
\item $G^{p^{k}}$ is powerfully embedded in $G$.
\end{enumerate}

\end{theorem}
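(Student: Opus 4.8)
The plan is to prove both parts simultaneously by reducing everything to the case $k=1$ and then bootstrapping up. The engine is the following Main Lemma, which I would establish for $p$ odd: if $N$ is powerfully embedded in $G$, then (a) $N^p$ is powerfully embedded in $G$, and (b) the set $\{x^p \mid x \in N\}$ of $p$th powers already equals the subgroup $N^p$. Granting the Main Lemma, the theorem follows by induction on $k$. Since $G$ is powerful it is powerfully embedded in itself, so taking $N=G$ gives the case $k=1$. For the inductive step, suppose $G^{p^{k-1}}$ is powerfully embedded in $G$ and equals $\{x^{p^{k-1}}\mid x\in G\}$ as a set; applying the Main Lemma with $N=G^{p^{k-1}}$ shows that $(G^{p^{k-1}})^p$ is powerfully embedded and equals $\{y^p \mid y\in G^{p^{k-1}}\}=\{x^{p^{k}}\mid x\in G\}$. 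The set description also forces $(G^{p^{k-1}})^p=G^{p^{k}}$, which yields part (ii) together with the nesting identity $(G^{p^{i}})^{p^{j}}=G^{p^{i+j}}$ by iteration.

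For part (a) of the Main Lemma I would use commutator calculus to show $[N^p,G]\le N^{p^2}$. The natural route is first to prove that $[N,G]$ is itself powerfully embedded in $G$, and then to establish the identity $[N^p,G]=[N,G]^p$; combined with the hypothesis $[N,G]\le N^p$ this gives $[N^p,G]=[N,G]^p\le (N^p)^p=N^{p^2}$, as required. The identity $[N^p,G]=[N,G]^p$ I would prove by expanding $[x^p,g]$ via the standard commutator identities and collecting, so that $[x^p,g]\equiv [x,g]^p$ modulo terms lying in higher (powerfully embedded) commutator subgroups, followed by a downward induction along the lower central series. A by-product of the same calculation is the estimate $\gamma_i(H)\le H^{p^{i-1}}$ for a powerful group $H$, which I will need below (note that $N$ is powerful, since $[N,N]\le[N,G]\le N^p$).

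Part (b) is where the real work lies, and I expect it to be the main obstacle. Take $x,y\in N$; by the Hall--Petrescu collection formula one may write
\[
x^p y^p = (xy)^p \, c_2^{\binom{p}{2}} c_3^{\binom{p}{3}} \cdots c_{p-1}^{\binom{p}{p-1}} c_p ,
\]
with $c_i\in\gamma_i(\langle x,y\rangle)\le\gamma_i(N)$. Because $p$ is odd, $p\mid\binom{p}{i}$ for $2\le i\le p-1$, so each factor $c_i^{\binom{p}{i}}$ lies in $\gamma_i(N)^p\le N^{p^{i}}\le N^{p^2}$, while $c_p\in\gamma_p(N)\le N^{p^{p-1}}\le N^{p^2}$ since $p-1\ge 2$. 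Hence the entire correction term lies in $N^{p^2}=(N^p)^p$, giving $x^p y^p=(xy)^p w$ with $w\in(N^p)^p$.

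It then remains to absorb $w$ into a single $p$th power, i.e. to produce $z\in N$ with $z^p=(xy)^p w$; this shows the set of $p$th powers is closed under multiplication and therefore coincides with $N^p$. I would carry out this absorption by induction on $|G|$ (the subgroup $N^p$ is properly smaller than $N$ and powerful by part (a), so the inductive hypothesis applies to it and controls the deeper $p$th powers), passing to a suitable proper section in which the correction term vanishes. The oddness of $p$ is used twice and is essential: to kill the binomial coefficients $\binom{p}{i}$ for $2\le i\le p-1$, and to guarantee $\gamma_p(N)\le N^{p^2}$; this is precisely the point at which the $p=2$ case genuinely differs and must instead be run with the fourth-power definition of powerful embedding.
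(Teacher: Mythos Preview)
The paper does not prove this theorem at all: it is quoted in Section~\ref{section preliminiaries} as a preliminary fact from the literature (Theorem~11.10 of \cite{khukhro_1998}), so there is no in-paper proof to compare your proposal against.

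That said, your outline is essentially the standard textbook argument found in the cited reference (and in Dixon--du Sautoy--Mann--Segal): one proves the two-part ``Main Lemma'' for a powerfully embedded $N\trianglelefteq G$ and then inducts on $k$. Two small points to tighten. First, in your part~(a) you write $[N^{p},G]=[N,G]^{p}\le (N^{p})^{p}=N^{p^{2}}$; the last equality is exactly what has not yet been established, but you do not need it---the inclusion $[N,G]^{p}\le (N^{p})^{p}$ already follows from $[N,G]\le N^{p}$, and that is precisely the definition of $N^{p}$ being powerfully embedded. Second, in part~(b) you invoke $\gamma_{i}(N)^{p}\le N^{p^{i}}\le N^{p^{2}}$, which again presupposes the nesting identity you are in the process of proving. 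The clean way to phrase it is: for $2\le i\le p-1$ one has $c_{i}\in\gamma_{2}(N)\le N^{p}$, hence $c_{i}^{p}\in (N^{p})^{p}$ directly; and $c_{p}\in\gamma_{p}(N)\le\gamma_{3}(N)\le[N^{p},G]\le (N^{p})^{p}$ by part~(a), using $p\ge 3$. With those cosmetic fixes your absorption step (producing $z\in N$ with $z^{p}=(xy)^{p}w$) is the genuine remaining work; the usual execution is to observe that $x\mapsto x^{p}$ induces a surjective homomorphism $N/N^{p}\to N^{p}/(N^{p})^{p}$ (well-definedness and additivity coming from the Hall--Petrescu correction lying in $(N^{p})^{p}$), so the image of the $p$-power map contains a generating set of $N^{p}$ modulo its Frattini subgroup and hence is all of $N^{p}$. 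Your induction-on-order sketch amounts to the same thing.
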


\begin{lemma}[\cite{khukhro_1998}, Lemma 11.2]
\label{K is pe in G if  [K,G] lew K^p[K,G,G]}
A normal subgroup $K$ in a finite $p$-group $G$ is powerfully embedded in $G$ if $[K,G] \leq K^{p}[K,G,G]$.
\end{lemma}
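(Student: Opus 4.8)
The plan is to pass to a quotient in order to clear the term $K^p$ from the hypothesis, and then to exploit the nilpotency of $G$. First I would note that $K^p$ is characteristic in $K$ (it is generated by $p$th powers, so it is preserved by every automorphism of $K$) and hence, since $K \trianglelefteq G$, normal in $G$. This lets me form $\overline{G} = G/K^p$ and write $\overline{K} = K/K^p$. Under the bar map the hypothesis $[K,G] \leq K^{p}[K,G,G]$ becomes $[\overline{K},\overline{G}] \leq [\overline{K},\overline{G},\overline{G}]$, because $\overline{K^p}=1$; and the desired conclusion $[K,G] \leq K^p$ becomes exactly $[\overline{K},\overline{G}] = 1$. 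So it suffices to prove this reduced statement.

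Next I would set $\overline{N} = [\overline{K},\overline{G}]$. Since $\overline{K}$ is normal in $\overline{G}$, the subgroup $\overline{N}$ is normal in $\overline{G}$, and normality gives the inclusion $[\overline{N},\overline{G}] \leq \overline{N}$. The reduced hypothesis, on the other hand, reads $\overline{N} \leq [\overline{N},\overline{G}]$. Combining these two inclusions yields the key equality $\overline{N} = [\overline{N},\overline{G}]$.

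Finally I would feed this equality into the nilpotency of $\overline{G}$ (a finite $p$-group, hence nilpotent). Since $\overline{N} = [\overline{K},\overline{G}] \leq \gamma_2(\overline{G})$, iterating the equality gives $\overline{N} = [\overline{N},\overline{G}] \leq [\gamma_2(\overline{G}),\overline{G}] = \gamma_3(\overline{G})$, and inductively $\overline{N} \leq \gamma_m(\overline{G})$ for every $m$. Taking $m$ larger than the nilpotency class of $\overline{G}$ forces $\overline{N}=1$, which is precisely what we wanted.

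I do not anticipate a genuine obstacle here; the argument is short. The single point that needs the right idea rather than routine manipulation is the final step, where one must recognise that a normal subgroup satisfying $\overline{N} = [\overline{N},\overline{G}]$ in a nilpotent group is necessarily trivial. Everything else — the normality of $K^p$ in $G$ and the behaviour of the commutator inclusions under the quotient map — is standard bookkeeping.
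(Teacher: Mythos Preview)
Your argument is correct. The paper itself does not supply a proof of this lemma; it is quoted as a preliminary fact from \cite{khukhro_1998}, and your reduction modulo $K^{p}$ followed by the observation that $\overline{N}=[\overline{N},\overline{G}]$ forces $\overline{N}=1$ by nilpotency is precisely the standard proof given in that reference.
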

Lemma \ref{K is pe in G if  [K,G] lew K^p[K,G,G]} will be used in this paper when we wish to show that certain subgroups are powerfully embedded as it allows for a reduction to a simpler case by assuming $[K,G,G]=1$.

\begin{lemma}[\cite{khukhro_1998}, Lemma 11.7]
\label{powerfully embedded group with an element is powerful}
If $N$ is a powerfully embedded subgroup of $G$, then, for any $h \in G$, the subgroup $H = \langle h \rangle N$ is a powerful $p$-group and $[H,H] \leq N^{p}$. 
\end{lemma}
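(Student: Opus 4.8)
The plan is to reduce everything to the elementary observation that $N^p$ is a normal subgroup of $H$ which already contains every commutator of a convenient generating set. First I would record that a powerfully embedded subgroup is automatically normal: since $[N,G] \leq N^p \leq N$, we have $N \trianglelefteq G$, and hence $N \trianglelefteq H$ where $H = \langle h \rangle N$. In particular $H$ is a genuine subgroup of $G$, and being contained in the finite $p$-group $G$ it is itself a finite $p$-group; it remains only to verify the two structural claims.

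Next I would establish the inclusion $[H,H] \leq N^p$. The key tool is the standard fact that if $H = \langle X \rangle$ then $[H,H]$ is the normal closure in $H$ of the set $\{[x,y] : x,y \in X\}$ of commutators of generators. Here I take the generating set $X = \{h\} \cup N$. The commutators of generators fall into two types: those of the form $[n_1,n_2]$ with $n_1,n_2 \in N$, which lie in $[N,N] \leq [N,G] \leq N^p$; and those of the form $[h,n]$ with $n \in N$, which lie in $[\langle h\rangle, N] \leq [G,N] \leq N^p$. Thus every commutator of generators lies in $N^p$. To pass from the generators to the whole of $[H,H]$ I would use that $N^p$ is characteristic in $N$ and $N \trianglelefteq H$, so $N^p \trianglelefteq H$; the normal closure in $H$ of a set contained in the normal subgroup $N^p$ is again contained in $N^p$, and hence $[H,H] \leq N^p$, which is the second assertion.

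Powerfulness then follows immediately: since $N \leq H$ we have $N^p \leq H^p$, and therefore $[H,H] \leq N^p \leq H^p$, so $H$ is powerful. I do not anticipate a serious obstacle; the only points requiring any care are the normal-closure description of $[H,H]$ and the verification that $N^p$ is normal in $H$, both of which are routine. For the even prime the same argument applies verbatim with $N^p$ replaced throughout by $N^4$, using the $p=2$ form of powerful embedding.
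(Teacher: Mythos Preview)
Your proof is correct. Note, however, that the paper does not actually prove this lemma: it is quoted as a preliminary result from Khukhro's book \cite{khukhro_1998}, so there is no ``paper's own proof'' to compare against. That said, your argument is essentially the standard one found in the cited reference: reduce to commutators of generators, observe they all land in $N^{p}$ via the powerful-embedding hypothesis, and use normality of $N^{p}$ in $H$ to pass to the full derived subgroup. Your handling of the $p=2$ case and the care taken with the normal-closure step are both fine.
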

Lemma \ref{powerfully embedded group with an element is powerful} will enable us to reduce problems about quasi-powerful $p$-groups to powerful $p$-groups. We will show in Proposition \ref{H is powerfully embedded in G} that for a quasi-powerful $p$-group $G$, the subgroup $H=G^{p}Z(G)$ is powerfully embedded in $G$. Hence for any $g\in G$ we will know that $\langle g, H \rangle $ is a powerful $p$-group.

\begin{lemma}[Interchanging Lemma, \cite{SHALEV1993271}, Lemma 3.1]
\label{lemma shalev interchange}
If $M$ and $N$ are powerfully embedded subgroups in a finite $p$-group $P$, then $[M^{p^{i}},N^{p^{j}}]=[M,N]^{p^{i+j}}$ for all $i,j \in \mathbb{N}$.

\end{lemma}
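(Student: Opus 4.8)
The plan is to reduce the statement to the single base case $[M^{p},N]=[M,N]^{p}$ (together with its mirror image $[M,N^{p}]=[M,N]^{p}$) and then bootstrap to arbitrary exponents by induction on $i+j$. For the inductive step, recall that a power of a powerfully embedded subgroup is again powerfully embedded by Theorem~\ref{theorem properties of pth powers in powerful groups}(ii), so $M^{p^{i-1}}$ and $N^{p^{j}}$ are powerfully embedded in $P$; applying the base case to them gives $[M^{p^{i}},N^{p^{j}}]=[(M^{p^{i-1}})^{p},N^{p^{j}}]=[M^{p^{i-1}},N^{p^{j}}]^{p}$, and since $L:=[M,N]$ is powerful we may collapse iterated powers using $(L^{p^{a}})^{p}=L^{p^{a+1}}$ from Theorem~\ref{theorem properties of pth powers in powerful groups}(i). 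Thus everything rests on the base case.

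Before proving the base case I would first record that $L=[M,N]$ is itself powerfully embedded (hence powerful) in $P$; this can be obtained from Lemma~\ref{K is pe in G if  [K,G] lew K^p[K,G,G]} after the usual reduction to the case $[L,P,P]=1$, combined with the three subgroups lemma and the relations $[M,P]\le M^{p}$ and $[N,P]\le N^{p}$. With $L$ powerful in hand, the inclusion $[M^{p},N]\subseteq L^{p}$ is the easy half: since $M$ is powerful, every element of $M^{p}$ is a genuine $p$th power $x^{p}$ by Theorem~\ref{theorem properties of pth powers in powerful groups}(i), and the commutator collection formula gives $[x^{p},y]=\prod_{k=0}^{p-1}[x,y]^{x^{k}}$ for $y\in N$. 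Because $L$ is powerfully embedded, $[L,P]\le L^{p}$, so $L/L^{p}$ is central in $P/L^{p}$ and each conjugate $[x,y]^{x^{k}}$ is congruent to $[x,y]$ modulo $L^{p}$. Hence $[x^{p},y]\equiv[x,y]^{p}\equiv 1 \pmod{L^{p}}$, which yields $[M^{p},N]\le L^{p}=[M,N]^{p}$.

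The reverse inclusion $[M,N]^{p}\subseteq[M^{p},N]$ is the main obstacle, and here I would push the collection formula one level deeper. The key simplification is to reduce, via a minimal-counterexample argument passing to the quotient $P/[L,P,P]$, to the case $[L,P,P]=1$. In this case $[c,x]=[[x,y],x]$ lies in the central subgroup $[L,P]$, so $[x,y]^{x^{k}}=[x,y]\,[c,x]^{k}$ and the collection formula telescopes to $[x^{p},y]=[x,y]^{p}\,[c,x]^{\binom{p}{2}}$. Since $p$ is odd, $\binom{p}{2}=p(p-1)/2$ is divisible by $p$, and as $[c,x]\in[L,P]\le L^{p}$ we obtain $[c,x]^{\binom{p}{2}}\in (L^{p})^{p}=L^{p^{2}}$; thus $[x,y]^{p}\equiv[x^{p},y]\pmod{L^{p^{2}}}$. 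As the $p$th powers of the generators $[x,y]$ of $L$ generate $L^{p}$ modulo $L^{p^{2}}$, this gives $L^{p}\subseteq[M^{p},N]\,L^{p^{2}}$.

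Finally I would close the gap with a Frattini-type argument. Since $L^{p}$ is powerful, its Frattini subgroup equals $(L^{p})^{p}=L^{p^{2}}$, so the inclusion $L^{p}\subseteq[M^{p},N]\,L^{p^{2}}$ reads $L^{p}=(L^{p}\cap[M^{p},N])\,\Phi(L^{p})$ by Dedekind's modular law; as the Frattini subgroup consists of non-generators this forces $L^{p}=L^{p}\cap[M^{p},N]\subseteq[M^{p},N]$. Combined with the easy inclusion this gives $[M^{p},N]=[M,N]^{p}$, and the symmetric computation gives $[M,N^{p}]=[M,N]^{p}$, completing the base case. The two points I expect to require the most care are the reduction to $[L,P,P]=1$ (ensuring that the statement in the quotient lifts back, which is why one argues with a minimal counterexample and absorbs the residual term $[L,P,P]$) and the verification that $L=[M,N]$ is powerfully embedded without circularity; the oddness of $p$ enters decisively through the divisibility of $\binom{p}{2}$ by $p$.
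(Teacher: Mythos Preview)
The paper does not prove this lemma; it is quoted in the preliminaries section with a citation to Shalev, so there is no in-paper argument to compare your proposal against.

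That said, your sketch is essentially correct and follows the standard route. Two small points are worth noting. First, Theorem~\ref{theorem properties of pth powers in powerful groups}(ii) as stated only says that $G^{p^{k}}$ is powerfully embedded in the powerful group $G$ itself; what you need for the inductive step is the stronger fact that a power of a powerfully embedded subgroup remains powerfully embedded in the ambient group $P$. This is true and standard (and is proved by exactly the kind of commutator collection you carry out later), so there is no circularity, but the citation is slightly off. Second, your minimal-counterexample reduction to $[L,P,P]=1$ is in fact unnecessary: applying the Hall commutator expansion directly, the error term lies in $\gamma_{2}(T)^{p}\gamma_{p}(T)$ with $T=\langle x,[x,y]\rangle$, and one checks $\gamma_{2}(T)\le[L,P]\le L^{p}$ while for $p\ge 3$ also $\gamma_{p}(T)\le[L,P,P]\le[L^{p},P]\le L^{p^{2}}$ (using that $L^{p}$ is again powerfully embedded). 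Hence the congruence $[x^{p},y]\equiv[x,y]^{p}\pmod{L^{p^{2}}}$ already holds without any reduction, and your Frattini argument then finishes the base case exactly as written. If you prefer to keep the minimal-counterexample framing, the lift-back you flag as delicate works precisely because $[L,P,P]\le L^{p^{2}}=\Phi(L^{p})$, after which the same Frattini argument applied to $[M^{p},N]\,[L,P,P]=L^{p}$ forces $[M^{p},N]=L^{p}$.
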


Generalising the next theorem to the case when $G$ is a quasi-powerful $p$-group is one of the main aims of this paper. We remark that we are sometimes able to deploy this theorem directly by reducing from a quasi-powerful group to a powerful subgroup.

\begin{theorem}[\cite{Fernandez-Alcober2007}, Theorem 1]
\label{ Gustavos result for powerful p-groups}
Let $G$ be  a powerful $p$-group. Then, for every $i \geq 0$:

\begin{enumerate}[(i)]
\item If $x,y \in G$ and $o(y) \leq p^{i}$ then $o([x,y]) \leq p^{i}$.
\item If $x,y \in G$ are such that $o(x) \leq p^{i+1}$ and $o(y) \leq p^{i}$, then $o([x^{p^{j}}, y^{p^{k}}]) \leq p^{i-j-k}$ for all $j, k \geq 0$. 
\item If $p$ is odd, then $\exp \Omega_{i}(G) \leq p^{i}$.
\end{enumerate}
\end{theorem}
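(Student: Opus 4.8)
The plan is to prove the three parts in the given order, each feeding into the next, and to reduce everything to genuinely powerful two-generated sections where the structural lemmas of this subsection apply. The two main tools will be the Interchanging Lemma (Lemma \ref{lemma shalev interchange}), which turns a commutator of $p$-power subgroups into a $p$-power of a commutator, and the Hall--Petrescu collection formula, used in the form $(uv)^{n}=u^{n}v^{n}c_{2}^{\binom{n}{2}}\cdots c_{n}^{\binom{n}{n}}$ with $c_{k}\in\gamma_{k}(\langle u,v\rangle)$, together with the companion expansion of $[u,v^{n}]$. Throughout I would induct on the nilpotency class of $G$, passing to the quotients $G/\gamma_{c}(G)$, which remain powerful, so that the deep commutators produced by collection lie in sections of strictly smaller class where an inductive hypothesis is available. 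The key arithmetic input is the exact valuation $v_{p}\!\left(\binom{p^{i}}{k}\right)=i-v_{p}(k)$.

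For part (i), fix $y$ with $o(y)\le p^{i}$ and expand $[x,y^{p^{i}}]$ by the collection identity for $[x,y^{n}]$. Since $y^{p^{i}}=1$ the left-hand side is trivial, producing a relation whose leading factor is $[x,y]^{\binom{p^{i}}{1}}=[x,y]^{p^{i}}$ and whose remaining factors are the higher Engel commutators $[x,{}_{k}y]\in\gamma_{k+1}(\langle x,y\rangle)$ raised to $\binom{p^{i}}{k}$ for $k\ge 2$. Each of these lives in a section of smaller class, where induction bounds its order; combining those bounds with the valuation identity above forces every correction factor to vanish, leaving $[x,y]^{p^{i}}=1$. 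To guarantee that the ambient two-generated sections really are powerful I would use that $G^{p}$ is powerfully embedded (Theorem \ref{theorem properties of pth powers in powerful groups}) and that $\langle h\rangle G^{p}$ is powerful for every $h\in G$ (Lemma \ref{powerfully embedded group with an element is powerful}), invoking Lemma \ref{K is pe in G if  [K,G] lew K^p[K,G,G]} where a subgroup must be recognised as powerfully embedded.

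Part (ii) is then proved by peeling off one $p$-th power at a time, taking (i) as the base case $j=k=0$. Expanding $[x^{p},y]$ (respectively $[x,y^{p}]$) by collection writes it as a leading factor $[x,y]^{p}$ times deeper commutators of strictly smaller order, so that each peeled power lowers the exponent bound by one; the subgroup-level identity $[M^{p^{a}},N^{p^{b}}]=[M,N]^{p^{a+b}}$ from Lemma \ref{lemma shalev interchange} is the clean statement underpinning this book-keeping. Iterating $j+k$ times and applying (i) to the innermost commutator yields $o([x^{p^{j}},y^{p^{k}}])\le p^{i-j-k}$. For part (iii) I would show that $S_{i}=\{g\in G: o(g)\le p^{i}\}$ is closed under multiplication, whence $\Omega_{i}(G)=S_{i}$ and the exponent bound follows. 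Given $a,b\in S_{i}$, Hall--Petrescu with $n=p^{i}$ kills $a^{p^{i}}$ and $b^{p^{i}}$ and gives $(ab)^{p^{i}}=\prod_{k\ge 2}c_{k}^{\binom{p^{i}}{k}}$ with $c_{k}\in\gamma_{k}(\langle a,b\rangle)$; parts (i) and (ii) bound the orders of the $c_{k}$, and matching these against the valuation identity gives $(ab)^{p^{i}}=1$.

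The main obstacle is the terms whose collection exponent loses a power of $p$, namely those with $p\mid k$, where $v_{p}\!\left(\binom{p^{i}}{k}\right)=i-v_{p}(k)<i$. Such a deficit must be paid for by the extra order-reduction that (i) and especially the refined bound (ii) supply for commutators of larger weight, and organising the induction on class so that these deep terms are settled before the shallow ones is the delicate heart of the argument. This is also exactly where the hypothesis that $p$ is odd enters: for odd $p$ the first deficient term occurs at weight $k=p\ge 3$, where (ii) provides enough reduction, whereas for $p=2$ the deficit already appears at weight $k=2$, where no such reduction is available and the statement genuinely fails, as $\Omega_{1}(D_{8})$ shows.
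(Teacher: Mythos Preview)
This theorem is quoted in the paper as a preliminary result from \cite{Fernandez-Alcober2007}; no proof is given here, so there is no argument in the paper to compare your attempt against. For context, when the paper later proves the quasi-powerful analogues (Theorems~\ref{Theorem letter. Quasi powerful order comm less than order elems} and~\ref{ theorem letter more detailed commutator order bound}) it treats part~(iii) as the primitive input and deduces (i) from the identity $[x,y]=(y^{x})^{-1}y$ inside an auxiliary powerful subgroup, and (ii) by induction on $|G|$ modulo $\Omega_{1}(G)$; Fern\'andez-Alcober's own proof has the same logical shape. Your proposed order (i)$\Rightarrow$(ii)$\Rightarrow$(iii) runs against this grain, and the difficulty you yourself flag at the end is exactly where the reversal bites.

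That gap is genuine, not a matter of routine detail. In your argument for (i), the correction terms with $p\mid k$ carry binomial exponents of $p$-valuation $i-v_{p}(k)<i$, so killing them requires $o(c_{k})\le p^{\,i-v_{p}(k)}$; an induction on class using (i) alone only gives $o(c_{k})\le p^{i}$. You propose to cover the deficit with (ii), which is not yet proved, so the scheme is circular unless you set up a simultaneous induction and check that every appeal at class $c$ is to a strictly smaller-class instance---and that verification is precisely ``the delicate heart'' you do not supply. There is a second structural problem: a two-generated subgroup $\langle x,y\rangle$ of a powerful group need not be powerful, and the Interchanging Lemma requires \emph{powerfully embedded} subgroups, not arbitrary cyclic ones, so neither tool is available inside $\langle x,y\rangle$ as you use it. Passing to $\langle h\rangle G^{p}$ only helps once one of $x,y$ already lies in $G^{p}$. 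To make a collection-based strategy work you would have to track the Hall--Petrescu words through the global chain $G\ge G^{p}\ge G^{p^{2}}\ge\cdots$ rather than through $\langle x,y\rangle$, which in effect rebuilds Fern\'andez-Alcober's argument.
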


The following result was originally proved in \cite{L2002}, but an alternate proof was given in \cite{Fernandez-Alcober2007} and \cite{Mazur2007}.

\begin{theorem}[\cite{L2002} Theorem 3.1 and \cite{Fernandez-Alcober2007} Theorem 4]
\label{theorem |G^p^k|=|G: omega k G|}
Let $G$ be a powerful $p$-group. Then $|G:G^{p^{i}}|=|\Omega_{i}(G)|$ for all $i \geq 0$.

\end{theorem}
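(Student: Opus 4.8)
The plan is to recast the identity as a statement about the fibres of the $p^{i}$-th power map. Write $f_{i}\colon G\to G$, $x\mapsto x^{p^{i}}$. By Theorem~\ref{theorem properties of pth powers in powerful groups}(i) the image of $f_{i}$ is exactly $G^{p^{i}}$, so $f_{i}$ is a surjection onto $G^{p^{i}}$. If I can show that every fibre of $f_{i}$ has the same cardinality, then $|G| = |G^{p^{i}}|\cdot|f_{i}^{-1}(1)|$, and the theorem will follow once the fibre over $1$ is identified. So I would first pin down that distinguished fibre: it is $\{g : g^{p^{i}}=1\}$, and since $p$ is odd Theorem~\ref{ Gustavos result for powerful p-groups}(iii) gives $\exp\Omega_{i}(G)\le p^{i}$; hence every element of $\Omega_{i}(G)$ has order dividing $p^{i}$, while every element of order at most $p^{i}$ lies in $\Omega_{i}(G)$ by definition. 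This yields $f_{i}^{-1}(1)=\Omega_{i}(G)$, and it then remains to prove that $|f_{i}^{-1}(g)| = |\Omega_{i}(G)|$ for every $g\in G^{p^{i}}$.

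The heart of the argument is to show that the fibre over any $g=a^{p^{i}}$ is the coset $a\,\Omega_{i}(G)$; equivalently, to establish the two implications (a) $z\in\Omega_{i}(G)\Rightarrow (az)^{p^{i}}=a^{p^{i}}$, and (b) $a^{p^{i}}=c^{p^{i}}\Rightarrow a^{-1}c\in\Omega_{i}(G)$. Both reduce to understanding how the power map fails to be multiplicative, and here I would invoke the Hall--Petrescu collection formula to write $(uv)^{p^{i}}$ as $u^{p^{i}}v^{p^{i}}$ times a product of $\binom{p^{i}}{k}$-th powers of iterated commutators in $u$ and $v$. The subgroup $\langle u,v\rangle$ can be placed inside a powerful section of $G$ (via Lemma~\ref{powerfully embedded group with an element is powerful}, with the Interchanging Lemma~\ref{lemma shalev interchange} available to manipulate the relevant commutator powers), so the order bounds of Theorem~\ref{ Gustavos result for powerful p-groups}(ii) control the orders of these commutators and of their pertinent powers. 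I expect those bounds to force the correction terms to collapse once the hypotheses $z^{p^{i}}=1$ (for (a)) or $a^{p^{i}}=c^{p^{i}}$ (for (b)) are imposed, giving the two implications and hence the coset description of each fibre.

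The main obstacle is precisely this last point: showing that the commutator correction in the collection formula genuinely vanishes, so that the fibres really are cosets of $\Omega_{i}(G)$ and therefore all of size $|\Omega_{i}(G)|$. This is where Theorem~\ref{ Gustavos result for powerful p-groups}(ii) does the real work, and it is also the reason the restriction to odd $p$ enters (through $\exp\Omega_{i}(G)\le p^{i}$). Once uniform fibre size is in hand, the counting identity $|G|=|G^{p^{i}}|\cdot|\Omega_{i}(G)|$ gives $|G:G^{p^{i}}|=|\Omega_{i}(G)|$ at once.

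As a cross-check, or a fallback if the coset description proves too rigid, I would keep in reserve an induction on $i$: pass to the powerful quotient $G/\Omega_{1}(G)$, use the easy inclusion $\Omega_{i}(G)/\Omega_{1}(G)=\Omega_{i-1}\!\left(G/\Omega_{1}(G)\right)$ (if $\bar{x}^{p^{i-1}}=1$ then $x^{p^{i-1}}\in\Omega_{1}(G)$, so $o(x)\le p^{i}$), together with $G^{p^{i}}=(G^{p})^{p^{i-1}}$ from Theorem~\ref{theorem properties of pth powers in powerful groups}(i), to reduce to a section of smaller exponent. Reconciling the two index computations in that approach looks more delicate, so I would treat the fibre-counting argument as the primary route.
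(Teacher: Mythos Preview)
This theorem is stated in the paper as a preliminary result, cited from Wilson and Fern\'andez-Alcober; the paper itself supplies no proof of it. So there is no in-paper argument to compare against directly.

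That said, your primary approach is essentially Fern\'andez-Alcober's: he too analyses the $p^{i}$-th power map and shows its fibres are cosets of $\Omega_{i}(G)$. Your implication (a) is his Theorem~3 (for $p$ odd: if $z\in\Omega_{i}(G)$ and $a\in G$ then $(az)^{p^{i}}=a^{p^{i}}$), and its proof does use the commutator order bounds of Theorem~\ref{ Gustavos result for powerful p-groups} in the way you indicate. Your fallback induction, on the other hand, is closer in spirit to Wilson's original argument.

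Where your sketch is thin is implication (b). You propose to apply Hall--Petrescu to $a^{-1}$ and $c$ and then invoke Theorem~\ref{ Gustavos result for powerful p-groups}(ii) to kill the correction terms, but that theorem requires order bounds of the form $o(x)\le p^{i+1}$, $o(y)\le p^{i}$ on the elements involved, and here you have no such bounds on $a$ or $c$ --- only the relation $a^{p^{i}}=c^{p^{i}}$. The correction terms in $(a^{-1}c)^{p^{i}}$ do not visibly collapse from that hypothesis alone, so ``I expect those bounds to force the correction terms to collapse'' is precisely the point that needs a real argument. One clean way around this is to note that (a) by itself already gives one inequality: each fibre is a union of $\Omega_{i}(G)$-cosets, hence $|G|\ge |G^{p^{i}}|\cdot|\Omega_{i}(G)|$, i.e.\ $|G:G^{p^{i}}|\ge|\Omega_{i}(G)|$. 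The reverse inequality can then be obtained by the inductive route you describe as a fallback, and that combination is in fact how the published proofs are organised. So your instinct to keep the induction in reserve is well placed; it is not merely a cross-check but the natural complement to the fibre argument.
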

We will generalise Theorem \ref{theorem |G^p^k|=|G: omega k G|} to the case when $G$ is a quasi-powerful $p$-group in Section \ref{section quasi powerful p groups have a regular power structure}. \par

We now list two of the most abelian-like properties of powerful $p$-groups with respect to generators and subgroups.
\begin{theorem}[\cite{LUBOTZKY1987484}, Theorem 1.12] \label{thm pow p group subgroup rank bounded}
Let $G$ be a powerful $p$-group with $d(G)=r$ and let $H \leq G$. Then $d(H) \leq d(G)$.
\end{theorem}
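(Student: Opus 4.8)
The plan is to study $H$ through the descending chain of Agemo subgroups $G=G^{p^0}\ge G^{p}\ge G^{p^2}\ge\cdots$, which for a powerful $p$-group is especially well-behaved. First I would record the structural facts I need from Theorem \ref{theorem properties of pth powers in powerful groups}: each $G^{p^i}$ is powerfully embedded in $G$, so that $(G^{p^i})^p=G^{p^{i+1}}$ and $[G^{p^i},G^{p^i}]\le G^{p^{i+1}}$; consequently each factor $A_i:=G^{p^i}/G^{p^{i+1}}$ is elementary abelian, which I regard as an $\mathbb{F}_p$-vector space. Since $[G,G]\le G^p$ we have $\Phi(G)=G^p$, so $\dim A_0=d(G)=r$. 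The crucial extra ingredient is that the $p$-power map induces a homomorphism $P_i\colon A_i\to A_{i+1}$, $xG^{p^{i+1}}\mapsto x^pG^{p^{i+2}}$, and that $P_i$ is surjective because every element of $G^{p^{i+1}}=(G^{p^i})^p$ is genuinely a $p$-th power of an element of $G^{p^i}$. In particular $\dim\ker P_i=\dim A_i-\dim A_{i+1}$, so $\sum_i\dim\ker P_i=\dim A_0=r$ by telescoping.

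Next I would transport this filtration to $H$. For each $i$ set $B_i:=(H\cap G^{p^i})G^{p^{i+1}}/G^{p^{i+1}}\le A_i$, the image of $H\cap G^{p^i}$ in $A_i$, so that $B_i\cong (H\cap G^{p^i})/(H\cap G^{p^{i+1}})$. Since $g\in H\cap G^{p^i}$ forces $g^p\in H\cap G^{p^{i+1}}$, the map $P_i$ carries $B_i$ into $B_{i+1}$. I would then compute $d(H)=\dim_{\mathbb{F}_p}H/\Phi(H)$ through the induced filtration $V_i:=(H\cap G^{p^i})\Phi(H)/\Phi(H)$ of $V:=H/\Phi(H)$, giving $d(H)=\sum_i\dim(V_i/V_{i+1})$. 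The key observation is that $V_i/V_{i+1}$ is a quotient of $B_i$, and moreover that $P_{i-1}(B_{i-1})$ lies in the kernel of the surjection $B_i\twoheadrightarrow V_i/V_{i+1}$: indeed $P_{i-1}(B_{i-1})$ is spanned by the images of elements $g^p$ with $g\in H\cap G^{p^{i-1}}$, and each such $g^p$ lies in $H^p\le\Phi(H)$, hence maps to zero in $V$. Therefore $\dim(V_i/V_{i+1})\le\dim\bigl(B_i/P_{i-1}(B_{i-1})\bigr)$ for every $i$ (with the convention $B_{-1}=0$).

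It then remains to sum these inequalities. Writing $\kappa_i=\dim(B_i\cap\ker P_i)$ and applying rank--nullity to $P_{i-1}|_{B_{i-1}}$, a routine telescoping collapses the boundary terms and yields
$$ d(H)\le\sum_i\dim\bigl(B_i/P_{i-1}(B_{i-1})\bigr)=\sum_i\kappa_i\le\sum_i\dim\ker P_i=\dim A_0=r, $$
which is exactly the bound $d(H)\le d(G)$.

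The one genuinely delicate point, and the place where the hypothesis that $p$ is odd is essential, is the claim that $P_i$ is a well-defined homomorphism: one must check both that $x\equiv x'\pmod{G^{p^{i+1}}}$ forces $x^p\equiv(x')^p\pmod{G^{p^{i+2}}}$ and that $(xy)^p\equiv x^py^p\pmod{G^{p^{i+2}}}$ for $x,y\in G^{p^i}$. This is the standard fact that on a powerful $p$-group the $p$-th power map induces a homomorphism $N/N^p\to N^p/N^{p^2}$, applied with $N=G^{p^i}$; I would prove it via the Hall--Petrescu collection formula, observing that the correction terms all involve commutators in $[N,N]\le N^p$ or the binomial coefficient $\binom{p}{2}$, and hence land in $N^{p^2}=G^{p^{i+2}}$ precisely because $N$ is powerful and $p$ is odd. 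Everything else is bookkeeping with the two filtrations, and the interchanging Lemma \ref{lemma shalev interchange} is available should one wish to control the commutator terms $[G^{p^i},G^{p^j}]$ more explicitly.
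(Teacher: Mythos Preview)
The paper does not supply its own proof of this statement; it is quoted verbatim from \cite{LUBOTZKY1987484} (their Theorem~1.12) and used only as a background result, so there is nothing in the paper to compare your argument against.

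Your proof is correct and is essentially the classical Lubotzky--Mann argument: filter $G$ by the Agemo chain, observe that the $p$-th power map induces surjective $\mathbb{F}_p$-linear maps $P_i\colon A_i\twoheadrightarrow A_{i+1}$ between successive factors, intersect the filtration with $H$, and count. The telescoping identity $\sum_i\dim\bigl(B_i/P_{i-1}(B_{i-1})\bigr)=\sum_i\dim(B_i\cap\ker P_i)$ and the observation that $P_{i-1}(B_{i-1})$ lands in $\Phi(H)$ are exactly the ingredients used in the original source. One small quibble: your closing remark that ``the hypothesis that $p$ is odd is essential'' for $P_i$ to be a homomorphism overstates the case. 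The Lubotzky--Mann theorem is valid for $p=2$ as well, and under the $p=2$ definition of powerful ($[G,G]\le G^4$) one has $[G^{2^i},G^{2^i}]\le G^{2^{i+2}}$, so the squaring map still induces a homomorphism on the Agemo factors and your proof goes through unchanged. The oddness of $p$ is a standing hypothesis of the present paper, not of this particular theorem.
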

\begin{theorem}[\cite{LUBOTZKY1987484}, Theorem 1.11]
\label{theorem pow p groups prod of d cyclic groups}
Let $G$ be a powerful $p$-group with $d(G)=d$, then $G$ is a product of $d$ cyclic groups.
\end{theorem}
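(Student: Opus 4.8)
The plan is to argue by induction on $|G|$ via the Frattini subgroup, and to prove the sharper statement that $G=\langle a_1\rangle\langle a_2\rangle\cdots\langle a_d\rangle$ as a product of subsets, where $a_1,\dots,a_d$ is a minimal generating set and $d=d(G)$; the theorem is then immediate. Since $G$ is powerful we have $[G,G]\le G^p$, so $\Phi(G)=G^p[G,G]=G^p$, and consequently the images of $a_1,\dots,a_d$ form an $\mathbb{F}_p$-basis of the elementary abelian quotient $G/G^p$. Because $G^p=\Phi(G)$ is a proper subgroup whenever $G\neq 1$, and $G^p$ is itself powerful (it is powerfully embedded in $G$ by Theorem \ref{theorem properties of pth powers in powerful groups}, and a powerfully embedded subgroup is powerful as a group), the subgroup $G^p$ is the natural object for the inductive step.

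First I would establish the auxiliary fact that $G^p=\langle a_1^p,\dots,a_d^p\rangle$. Passing to the abelianisation $G/[G,G]$, the image of $G^p$ is generated by the images of the $a_i^p$, which yields $G^p=\langle a_1^p,\dots,a_d^p\rangle[G,G]$; the task is then to absorb the commutator factor, using $[G,G]\le G^p$ together with a descending argument along the chain $G\ge G^p\ge G^{p^2}\ge\cdots$. Granting this, the inductive hypothesis applied to $G^p$ gives $G^p=\langle a_1^p\rangle\cdots\langle a_d^p\rangle$, and since $\langle a_i^p\rangle\le\langle a_i\rangle$ this already furnishes the containment $G^p\subseteq S$, where $S:=\langle a_1\rangle\cdots\langle a_d\rangle$.

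It then remains to promote $S$ to all of $G$. As $G/G^p$ is elementary abelian with basis the images of the $a_i$, every $g\in G$ can be written as $g=a_1^{c_1}\cdots a_d^{c_d}w$ with $0\le c_i<p$ and $w\in G^p\subseteq S$, so $G=S\cdot G^p$; it therefore suffices to show that $S$ is closed under multiplication, for then $S$ is a subgroup containing the generators and $S=G$. The key mechanism is the Interchanging Lemma (Lemma \ref{lemma shalev interchange}) applied with $M=N=G$, which gives $[G,G^{p^k}]=[G,G]^{p^k}\le (G^p)^{p^k}=G^{p^{k+1}}$ (using $(G^{p^i})^{p^j}=G^{p^{i+j}}$ from Theorem \ref{theorem properties of pth powers in powerful groups}); hence moving a factor lying in $G^{p^k}$ leftward past some $a_i$ produces a commutator correction landing in the strictly deeper term $G^{p^{k+1}}$.

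I expect the main obstacle to be precisely this collection process. Rearranging a product $a_1^{k_1}\cdots a_d^{k_d}\cdot a_1^{pl_1}\cdots a_d^{pl_d}$ into standard form requires commuting the $p$th-power tail past the head, and each such move spawns commutators. What makes the rewriting terminate is that these corrections descend through $G\ge G^p\ge G^{p^2}\ge\cdots$, which reaches the trivial group after finitely many steps since $G$ is a finite $p$-group; combined with the inductive fact that each $G^{p^k}$ is itself a standard product $\langle a_1^{p^k}\rangle\cdots\langle a_d^{p^k}\rangle\subseteq S$, every correction can be reabsorbed into $S$. Making this bookkeeping precise, by tracking at which depth each correction appears and verifying that the rewriting genuinely closes up, is where the real work lies; the powerful hypothesis enters only through the single inequality $[G,G^{p^k}]\le G^{p^{k+1}}$, which is exactly what keeps the error terms strictly deeper than the factors being moved.
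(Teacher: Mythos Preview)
The paper does not give its own proof of this statement: Theorem~\ref{theorem pow p groups prod of d cyclic groups} is quoted in the preliminaries from \cite{LUBOTZKY1987484} without argument, so there is nothing in the paper to compare your attempt against. Your outline is essentially the classical Lubotzky--Mann collection argument (filter by the chain $G\ge G^p\ge G^{p^2}\ge\cdots$ and use $[G,G^{p^k}]\le G^{p^{k+1}}$ so that commutator corrections drop strictly in depth), and the overall strategy is sound.

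One genuine repair is needed, however. You phrase the inductive hypothesis for a \emph{minimal} generating set and then invoke it on $G^{p}$ with the sequence $a_1^{p},\dots,a_d^{p}$. These elements do generate $G^{p}$, but they need not form a minimal generating set: one may have $d(G^{p})<d$, and some $a_i^{p}$ may even be trivial. As stated, your inductive hypothesis does not apply. The fix is to prove the stronger claim that $G=\langle a_1\rangle\cdots\langle a_n\rangle$ for \emph{any} finite generating sequence $a_1,\dots,a_n$ of a powerful $p$-group $G$; minimality is never used in the collection step, so the induction then goes through, and the theorem follows by specialising to $n=d(G)$. With that adjustment your sketch is a correct proof plan, and the descending-depth bookkeeping you describe is exactly how the argument is usually completed.
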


We obtain variants of both of these results for quasi-powerful $p$-groups in Section \ref{section generators for subgroups}. 

\begin{remark}
\label{abelian likeness of powerful $p$-groups} The properties of powerful $p$-groups given so for demonstrate many of the abelian-like features of powerful $p$-groups. Thus on the one hand we can think of powerful $p$-groups as being close to abelian groups. On the other hand it turns out that we can think of powerful $p$-groups as being close to a typical $p$-group. For example, by a result of Lubotzky it is known that every finite $p$-group appears as a section of a powerful $p$-group (see \cite{Mann2003}, Theorem 1). Additionally it is known that if all characteristic subgroups of a finite $p$-group $G$ can be generated by $r$ elements, then $G$ contains an $r$ generator, powerful, characteristic subgroup whose index is bounded in terms of $p$ and $r$ (\cite{LUBOTZKY1987484}, Theorem 1.14). 
\end{remark}

We close this discussion on powerful $p$-groups by recalling a result from \cite{GONZALEZSANCHEZ2004193}. First we need the following definition. 
\begin{definition}
\label{defn potent pgroup}
A finite $p$-group is \textit{potent} if $[G,G]\leq G^{4}$ for $p=2$ or $\gamma_{p-1}(G) \leq G^{p}$ for $p>2$.
\end{definition}
Note that for $p\in \{ 2,3 \}$, the definitions of potent and powerful coincide.

The next result will be a crucial ingredient in the proof of Theorem \ref{theorem intro regular pow struc and 3 points}(\ref{thm intro: product of two elements of order at most p^i has order at most p^i}).

\begin{theorem}[\cite{GONZALEZSANCHEZ2004193}, Theorem 5.1]
\label{Theorem on potent p-groups}
Let $G$ be a powerful $p$-group and $N$ a normal subgroup of $G$. Then one of the following two properties holds:
\begin{enumerate}[(i)]
\item For any $i,s,t \geq 0$ such that $n = i+s+t \geq 1$ if $p$ is odd, $[G,G]^{p^{n}} \leq [N^{p^{i}}, G^{p^{s}}]^{p^{t}}$.
\item There exists a proper powerful subgroup $T$ of $G$ such that $N \leq T$.
\end{enumerate}

\end{theorem}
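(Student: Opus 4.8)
The plan is to prove the dichotomy by assuming that alternative (ii) fails and deducing (i). The first move is to collapse the whole family of inclusions in (i) to a single exponent pattern. Since $[G,G]$ is powerfully embedded in $G$ it is itself a powerful $p$-group (Lemma~\ref{powerfully embedded group with an element is powerful} with $h=1$), so Theorem~\ref{theorem properties of pth powers in powerful groups}(i) applied to it gives $([G,G]^{p^{m}})^{p^{t}}=[G,G]^{p^{m+t}}$ for all $m,t\geq 0$. Hence, by raising to the $p^{t}$ power, it suffices to treat the case $t=0$, namely to prove $[G,G]^{p^{i+s}}\leq[N^{p^{i}},G^{p^{s}}]$ for all $i,s\geq 0$ with $i+s\geq 1$. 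I would record the identity that drives everything: the Interchanging Lemma (Lemma~\ref{lemma shalev interchange}) applied to the powerful group $G$ gives $[G^{p^{i}},G^{p^{s}}]=[G,G]^{p^{i+s}}$, so the target inclusion asserts precisely that replacing one copy of $G^{p^{i}}$ by the possibly smaller subgroup $N^{p^{i}}$ loses none of $[G,G]^{p^{i+s}}$.

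Next I would split on whether $NG^{p}=G$. For odd $p$ a powerful group satisfies $\Phi(G)=G^{p}$ (as $[G,G]\leq G^{p}$, so $\Phi(G)=G^{p}[G,G]=G^{p}$), hence $NG^{p}=G$ forces $N\Phi(G)=G$ and therefore $N=G$ by the Frattini property. In this degenerate case (i) holds with equality, since $[N^{p^{i}},G^{p^{s}}]^{p^{t}}=[G^{p^{i}},G^{p^{s}}]^{p^{t}}=[G,G]^{p^{i+s+t}}=[G,G]^{p^{n}}$ by the Interchanging Lemma and the power structure of $[G,G]$ used above. Thus alternative (i) is settled cleanly whenever $N$ is large.

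It remains to treat $NG^{p}\neq G$, where I would aim to produce the proper powerful subgroup required by (ii). The natural first candidate is $T=NG^{p}$, which is proper and contains $N$; the task is to show it is powerful. I would reduce this to proving $T$ powerfully embedded in $G$, since then $[T,T]\leq[T,G]\leq T^{p}$. By the powerfully embedded criterion (Lemma~\ref{K is pe in G if  [K,G] lew K^p[K,G,G]}) it is enough to verify $[T,G]\leq T^{p}[T,G,G]$, and that criterion licenses the standard reduction to the case $[T,G,G]=1$. There one computes $[T,G]=[N,G]\,[G^{p},G]$; the Interchanging Lemma gives $[G^{p},G]=[G,G]^{p}$, and since $[G,G]\leq T$ we get $[G,G]^{p}\leq T^{p}$, so the whole question collapses to the single inclusion $[N,G]\leq T^{p}$.

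The main obstacle is exactly this last inclusion, and behind it the fact that $N$ is an arbitrary normal subgroup and need \emph{not} be powerfully embedded: without that hypothesis neither $[N^{p^{i}},G^{p^{s}}]\supseteq[G,G]^{p^{i+s}}$ nor $[N,G]\leq T^{p}$ is automatic, and $T=NG^{p}$ may itself need to be replaced by a finer overgroup. I expect the argument to require an induction on $|G|$, feeding in the reduction to $[N,G,G]=1$ (which linearises the relevant power--commutator calculus) and tracking precisely how far $[N,G]$ falls short of filling $[G,G]$ modulo $p$-th powers. The delicate point is to show that this shortfall is exactly what forces $N$ into a proper powerful overgroup, and that the overgroup produced is \emph{genuinely} powerful rather than merely proper, uniformly across all triples $(i,s,t)$; reconciling the two alternatives within the inductive step is where the real work lies.
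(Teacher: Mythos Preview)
This theorem is not proved in the paper; it is quoted in the preliminaries as Theorem~5.1 of \cite{GONZALEZSANCHEZ2004193}. There is therefore no proof in this paper against which to compare your attempt.

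On the proposal itself: your reductions are fine---collapsing to $t=0$ via the power structure of the powerfully embedded subgroup $[G,G]$, and handling $NG^{p}=G$ by observing $\Phi(G)=G^{p}$ and invoking the Frattini argument to force $N=G$. But the remaining case, in which you try to show that $T=NG^{p}$ is powerful, is exactly the substance of the theorem, and you have not established it; you say as much in your final paragraph. There is also a wrinkle in the logic: you open by proposing to assume (ii) fails and deduce (i), yet in the case $NG^{p}\neq G$ you switch to constructing $T$ and concluding (ii), which would contradict your standing assumption rather than deliver (i). What you would actually need, if you persist with the candidate $T=NG^{p}$, is the inclusion $[N,N]\leq (NG^{p})^{p}$ (for $T$ powerful) or the stronger $[N,G]\leq (NG^{p})^{p}$ (for $T$ powerfully embedded), and neither follows from $N$ being merely normal in $G$. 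If this route went through uniformly, the theorem would reduce to the Interchanging Lemma when $N=G$ together with the single observation that $NG^{p}$ is proper and powerful otherwise---so alternative (i) would never carry content for proper $N$. The paper's own application of the result (in the proof of Proposition~\ref{prop exponent of omega 1 for p=3}) invokes case (i) precisely for a proper normal subgroup, and the original argument in \cite{GONZALEZSANCHEZ2004193} is substantially more involved, both of which indicate that this shortcut is not available and that the gap you flag is the whole difficulty.
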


\subsection{Regular $p$-groups}
\label{subsection regular p groups}
Regular $p$-groups were introduced by P. Hall in his pioneering paper \cite{phallcontribtothetheoryofgroupsofprimepowerorder}. 
\begin{definition}
\label{defn of regular p group}
Let $G$ be a finite $p$-group. We say $G$ is a \textit{regular} $p$-group if for every $x,y \in G$ we have that $x^{p}y^{p} = (xy)^{p}c$ for some $c \in \gamma_{2}(\langle x,y \rangle)^{p} $.
\end{definition}
We now recall two results from the theory of regular $p$-groups. The first gives a condition on when a group is regular, based on the nilpotency class of the group. The second result tells us that in a regular $p$-group the order of a product of two elements cannot exceed the order of the factors. 

\begin{theorem}[ \cite{phallcontribtothetheoryofgroupsofprimepowerorder}, Corollary 4.14, Theorem 4.26]
\label{properties of regular pgroups}
Let $G$ be a finite $p$-group.
\begin{enumerate}[(i)]
\item If the nilpotency class of $G$ is less than $p$ then $G$ is regular.
\item If $a$ and $b$ are any two elements of the regular $p$-group $G$, then the order of $ab$ cannot exceed the orders of both $a$ and $b$. In particular for any $i \geq 0$ the subgroup $\Omega_{i}(G)= \{ x \in G \mid x^{p^{i}}=1     \}.$ 
\end{enumerate}

\end{theorem}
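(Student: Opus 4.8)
The plan is to treat the two parts separately: part (i) follows cleanly from the Hall--Petrescu collection formula, while part (ii) needs a more delicate induction.

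For part (i), I would invoke the Hall--Petrescu identity: for any $x,y$ in a group and any positive integer $m$ there exist $c_{i}\in\gamma_{i}(\langle x,y\rangle)$ with $x^{m}y^{m}=(xy)^{m}\prod_{i=2}^{m}c_{i}^{\binom{m}{i}}$. Specialising to $m=p$, the coefficient $\binom{p}{i}$ is divisible by $p$ for every $1\le i\le p-1$, so each factor $c_{i}^{\binom{p}{i}}$ with $2\le i\le p-1$ is the $p$th power of an element of $\gamma_{i}(\langle x,y\rangle)\le\gamma_{2}(\langle x,y\rangle)$ and hence lies in $\gamma_{2}(\langle x,y\rangle)^{p}$. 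The only other factor is $c_{p}\in\gamma_{p}(\langle x,y\rangle)$, and since the class of $G$ is less than $p$ we have $\gamma_{p}(\langle x,y\rangle)\le\gamma_{p}(G)=1$, so this factor is trivial. Collecting terms yields $x^{p}y^{p}=(xy)^{p}c$ with $c\in\gamma_{2}(\langle x,y\rangle)^{p}$, which is precisely the definition of a regular $p$-group.

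For part (ii), the key reduction is that it suffices to prove: if $o(a),o(b)\le p^{i}$ then $o(ab)\le p^{i}$. The displayed description of $\Omega_{i}(G)$ then follows, since this closure property (together with the automatic closure under inverses) forces the elements of order at most $p^{i}$ to already form the subgroup $\Omega_{i}(G)$ they generate, and the reverse inclusion is trivial. As subgroups of regular $p$-groups are again regular, I would replace $G$ by $H=\langle a,b\rangle$ and argue by induction on $|H|$. The engine is the iterated identity $a^{p^{i}}b^{p^{i}}=(ab)^{p^{i}}c$ with $c\in\gamma_{2}(H)^{p^{i}}$, obtained by induction on $i$ from the single-step defining property. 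Using $a^{p^{i}}=b^{p^{i}}=1$ this gives $(ab)^{p^{i}}=c^{-1}\in\gamma_{2}(H)^{p^{i}}$, so everything reduces to showing that this commutator correction term vanishes. If $H$ is abelian then $\gamma_{2}(H)=1$ and we are done; otherwise $\gamma_{2}(H)$ is a proper regular subgroup, and I would apply the inductive hypothesis both to a central quotient $H/\langle z\rangle$, with $z$ a central element of order $p$ in $\gamma_{2}(H)$, and to $\gamma_{2}(H)$ itself, where the corresponding statements about $p^{i}$th powers and orders are already available.

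The main obstacle is exactly this last step: recognising the correction term $c\in\gamma_{2}(H)^{p^{i}}$ as a trivial $p^{i}$th power rather than a nontrivial element killed by $p^{i}$. This is the genuine heart of Hall's theory, and the clean way to carry it through is to establish the entire regular power-structure package by a single simultaneous induction on $|G|$ — namely that $G^{p^{i}}$ coincides with the set of actual $p^{i}$th powers, that the elements of order at most $p^{i}$ form $\Omega_{i}(G)$, and the duality $|G:G^{p^{i}}|=|\Omega_{i}(G)|$. These assertions are interdependent and reinforce one another precisely at the point where the correction term must be identified, and once they hold for all proper subgroups and quotients the term is forced to be trivial, completing part (ii).
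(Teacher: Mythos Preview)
The paper does not prove this theorem; it is quoted as a classical preliminary result of P.~Hall (with explicit citations to Corollary~4.14 and Theorem~4.26 of his 1933 paper) and used as a black box. So there is no argument in the paper to compare your proposal against.

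On the merits of your sketch: part~(i) is correct and is the standard proof via the Hall--Petrescu formula.

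For part~(ii), the overall shape is right, but one step is stated too loosely. You claim that the iterated identity $a^{p^{i}}b^{p^{i}}=(ab)^{p^{i}}c$ with $c\in\gamma_{2}(H)^{p^{i}}$ is ``obtained by induction on $i$ from the single-step defining property''. It does not follow by a naive induction on $i$ alone: when you pass from $i$ to $i+1$ the new correction terms land only in $\gamma_{2}(H)^{p}$, not automatically in $\gamma_{2}(H)^{p^{i+1}}$, and to push them down you already need to know that in the proper regular subgroup $\gamma_{2}(H)$ the set of $p^{i}$th powers is a subgroup and that $(\gamma_{2}(H)^{p})^{p^{i}}=\gamma_{2}(H)^{p^{i+1}}$. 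In other words, the iteration on $i$ is entangled with the induction on $|H|$ and with the companion statements about $G^{p^{i}}$, exactly as you concede in your final paragraph. Your closing remark that the whole regular power-structure package must be established by a single simultaneous induction on $|G|$ is precisely Hall's method and is the honest way to close the gap; but as written, the middle paragraph of your part~(ii) presents as a free-standing step something that only becomes valid once that simultaneous induction is in place. If you rewrite part~(ii) so that the iterated identity is itself one of the statements being proved simultaneously (in the sharper form that $c$ is a product of $p^{i}$th powers of elements of $\gamma_{2}(H)$), the argument goes through.
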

One of the main results of this paper is a version of Theorem \ref{properties of regular pgroups} (ii) for quasi-powerful $p$-groups. For $p$ sufficiently large we will see that we can use Theorem \ref{properties of regular pgroups} (i) to reduce to the case that the group is regular. However a different argument is needed for $p=3$.

\section{Basic Properties of Quasi-powerful $p$-groups}
\numberwithin{theorem}{section}

\label{subsection basic properties of quasi-powerful $p$-groups}
In this section we introduce the basic properties of quasi-powerful $p$-groups for odd primes $p$. The results proved in this section will be used throughout the rest of the paper. Proposition \ref{H is powerfully embedded in G} will sometimes allow us to reduce to a powerful subgroup within a quasi-powerful group. At the end of this section this idea is used to prove Theorem \ref{Theorem letter. Quasi powerful order comm less than order elems}. 

\begin{definition}
Let $p$ be an odd prime, we say that a $p$-group $G$ is a \textit{quasi-powerful} $p$-group if $G/Z(G)$ is a powerful $p$-group.
\end{definition}

We do not give a definition of quasi-powerful groups for $p=2$, but in Section \ref{section on the even prime} we give an example which suggests for $p=2$ a different definition is needed. 

Suppose that $G/Z(G)$ is powerful. Throughout the rest of this paper, we shall let $H=G^{p}Z(G)$.  Notice that $G^{\prime} \leq H$. From this it is clear that powerful $p$-groups and groups of nilpotency class $2$ are quasi-powerful $p$-groups. However there exist quasi-powerful $p$-groups which are neither of those things. We now give an example of a $3$-group of nilpotency class $3$ such that $G/Z(G)$ is powerful, but $G$ is neither regular nor powerful. 
\begin{example}
\label{example of a quasipowerful group}
 Let $G$ be the group with presentation: 
$$\langle a,b,c,d \mid a^{27}, b^{3}, c^{27}, d^{3},a^b=a, a^c=a^4b, a^d=a, b^c=ba^9, b^d=b, c^d=cb^{-1} \rangle. $$
The following details are easily checked using GAP \cite{GAP4.8.4}, where this group can be constructed as \texttt{SmallGroup(6561,86718)} using the package \texttt{SglPPow} \cite{SglPPow1.1}. We can describe the structure of this group as $$ ((\mathbb{Z}_{27} \times \mathbb{Z}_{3}) \rtimes \mathbb{Z}_{27}) \rtimes \mathbb{Z}_{3}. $$
In addition, we have $Z(G)=\langle a^{3}b^{-1}, a^{9}, c^{9} \rangle$ and $G^{\prime}=\langle a^{3}, b \rangle$. The group is not powerful because $b \notin G^{3}$. However $G/Z(G)$ is powerful, since $b \in Z(G)G^{p}$. Furthermore since for $p=3$ the definitions of potent and powerful coincide, $G$ is also not a potent $p$-group. \par
Moreover one can show that this group is not a regular $p$-group. For example let $x=a^{18}c^{18}d$ and $y=c$. Then $$(xy)^{-3}\cdot x^3 \cdot y^3 \notin \left(\gamma_{2}(\langle x,y \rangle)\right)^{3}.$$ 
\end{example}
Example \ref{example of a quasipowerful group} demonstrates a quasi-powerful $p$-group  which does not fall into one of the families which are already known to have a regular power structure.  

Before moving on we make the following remark which will be used frequently throughout the rest of this paper.
\begin{remark}
It is easy to see that the property of being a quasi-powerful $p$-group is preserved under taking quotients. However it is not necessarily preserved under taking subgroups. For instance in Example \ref{example of a quasipowerful group}, one can check that the subgroup $\langle a^{3}, b, c, d \rangle $ is not quasi-powerful.
\end{remark}

We now begin by investigating the subgroup $H=G^{p}Z(G)$.

\begin{lemma}
\label{lemma f^ph^p=j^pz}
For any $g,h \in G$ we have that $g^{p}h^{p}=j^{p}z$ for some $j \in G$ and $ z \in Z(G)$. 
\end{lemma}
\begin{proof}
Since $\bar{G}=G/Z(G)$ is powerful, in the quotient group the product of $p$th powers is a $p$th power and so $ g^{p}Z(G) \cdot h^{p}Z(G)= j^{p} Z(G)$ for some $j \in G$. Thus $g^{p}h^{p}=j^{p}z$ for some $j \in G$ and $z \in Z(G)$. 
\end{proof}

\begin{remark} \label{formofelementsinH}
This means that any $h \in H$ is of the form $g^{p}z$ for some $g\in G$ and $z \in Z(G)$, since $H=G^{p}Z(G)$, so $h= g_{1}^{p} \dots g_{t}^{p} z_{1} = g^{p}z$ by repeated application of Lemma \ref{lemma f^ph^p=j^pz}.
\end{remark}

\begin{proposition}
\label{H is powerfully embedded in G}
The subgroup $H$ is powerfully embedded in $G$.
\end{proposition}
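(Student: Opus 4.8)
The plan is to apply Lemma~\ref{K is pe in G if  [K,G] lew K^p[K,G,G]}, so that it suffices to verify the single inclusion $[H,G]\le H^p[H,G,G]$. First note that $H=G^pZ(G)$ is characteristic in $G$, hence normal, so the lemma is available. Since $Z(G)$ is central, Remark~\ref{formofelementsinH} lets me write every element of $H$ as $g^pz$ with $g\in G$ and $z\in Z(G)$, and then $[g^pz,k]=[g^p,k]$ for all $k\in G$. Thus $[H,G]$ is generated by the commutators $[g^p,k]$ (it is itself normal, being characteristic), and since $H^p[H,G,G]$ is also normal (a product of characteristic subgroups), it is enough to show that each individual $[g^p,k]$ lies in $H^p[H,G,G]$.

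I would set $L=[H,G,G]$ and compute modulo $L$; the benefit of the reduction afforded by the lemma is exactly that $[H,G]$ is central modulo $L$. This is where the quasi-powerful hypothesis is used: since $G/Z(G)$ is powerful we have $G'\le G^pZ(G)=H$, whence $[g,k]\in H$ and $\gamma_3(G)=[G',G]\le[H,G]\le H$. Writing $c=[g,k]$ and collecting in the standard way, I would expand
$$[g^p,k]=c^{g^{p-1}}c^{g^{p-2}}\cdots c^{g}\,c,$$
then replace each $c^{g^i}$ by $c\,[c,g^i]$ and use that $[c,g^i]\in[H,G]$ is central modulo $L$; pulling all the correction factors to the right gives, modulo $L$,
$$[g^p,k]\equiv c^p\prod_{i=0}^{p-1}[c,g^i]\equiv [g,k]^p\,[g,k,g]^{p(p-1)/2},$$
where the final step uses that $x\mapsto[c,x]$ is a homomorphism modulo $L$ (again because its values lie in $[H,G]$, central mod $L$), so that $[c,g^i]\equiv[c,g]^i=[g,k,g]^i$ and $\sum_{i=0}^{p-1}i=p(p-1)/2$.

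To finish, I would observe that $p$ odd makes $(p-1)/2$ an integer, so $[g,k,g]^{p(p-1)/2}=\bigl([g,k,g]^{(p-1)/2}\bigr)^p$ is a $p$th power of an element of $\gamma_3(G)\le H$, while $[g,k]^p$ is a $p$th power of $[g,k]\in G'\le H$; both therefore lie in $H^p$. Hence $[g^p,k]\in H^pL=H^p[H,G,G]$, which completes the verification and, by the lemma, shows $H$ is powerfully embedded in $G$. The step I expect to be the \emph{main obstacle} is precisely this control of the power-commutator expansion: one must check that all the correction terms assemble, modulo $[H,G,G]$, into a genuine $p$th power of an element of $H$, and it is the oddness of $p$ (forcing $p\mid p(p-1)/2$) that turns the error term $[g,k,g]^{p(p-1)/2}$ into such a $p$th power rather than a mere element of $\gamma_3(G)$.
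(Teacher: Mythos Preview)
Your proof is correct and follows essentially the same approach as the paper's own argument: both invoke Lemma~\ref{K is pe in G if  [K,G] lew K^p[K,G,G]} to reduce to the case where $[H,G,G]$ is trivial (you phrase this as working modulo $L=[H,G,G]$, the paper simply assumes $[H,G,G]=1$), both expand $[g^p,k]$ via the identity $[g^p,k]=[g,k]^{g^{p-1}}\cdots[g,k]$, and both collect the central correction terms to obtain $[g,k]^p\,[g,k,g]^{p(p-1)/2}\in H^p$ using that $p$ is odd. The arguments are interchangeable.
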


\begin{proof}
By Lemma \ref{K is pe in G if  [K,G] lew K^p[K,G,G]}, we may assume that $[H,G,G]=1$. Consider some $h \in H$. By Remark \ref{formofelementsinH} we can write $h=x^{p}z$ for some $x\in G$ and $z \in Z(G)$. Let $ g \in G$ and consider
\begin{align}
    [h,g] &= [x^{p}z,g] \nonumber \\
    &= [x^{p},g] \nonumber \\
    &= [x,g]^{x^{p-1}}[x,g]^{x^{p-2}} \dots [x,g] \nonumber \\
    &= [x,g][x,g,x^{p-1}][x,g][x,g,x^{p-2}]\dots [x,g]. \label{eqndagger}
\end{align}
We have $p$ of the $[x,g]$ terms and $p-1$ of the $[x,g,\star]$ terms. Observe that $[x,g] \in G^{\prime} \leq H $. Hence these terms of weight $3$ are central because $[H,G,G]=1$. Also notice that this implies $[x,g,x^{i}]=[x,g,x]^{i}$. Hence (\ref{eqndagger}) becomes 
$$[x,g]^p[x,g,x]^{(p-1)+(p-2)+\dots +1} = [x,g]^p[x,g,x]^{1/2 (p-1)p}.$$ 
As $[x,g]\in H$ and $[x,g,x]\in H$ and $p$ is an odd prime we see that $[h,g]\in H^{p}$. It follows that $[H,G] \leq H^p$. \end{proof}
\begin{remark}
As $H$ is powerfully embedded in $G$, we can conclude that for $p>3$ the group $G$ is potent, since $$\gamma_{p-1}(G) \leq [H,\underbrace{G,\dots, G}_{p-3}]\leq H^{p} \leq G^{p}.$$ Thus if $p>3$  we could appeal to \cite{GONZALEZSANCHEZ2004193} to conclude that these groups have a regular power structure. However, we will give an independent proof of this fact and deal with all odd primes. Furthermore we will see the most involved and interesting case is when $p=3$, and we have already seen that a quasi-powerful $3$-group need not be potent.
\end{remark}

We now prove an analogue of Theorem \ref{ Gustavos result for powerful p-groups}(i) for quasi-powerful $p$-groups, this is Theorem \ref{Theorem letter. Quasi powerful order comm less than order elems}.

\begin{proof}[Proof of Theorem \ref{Theorem letter. Quasi powerful order comm less than order elems}]
Let $g,h \in G$ with $g$ of order at most $p^{i}$. The commutator $[g,h]$ can be written as $g^{-1} g^{h}$ where both terms of the product are of order at most $p^{i}$. The group $W= \langle g, H \rangle $ contains $[g,h]$ and $g^{-1}$ and so contains $g^{h}$. The group $W$ is powerful by Lemma \ref{powerfully embedded group with an element is powerful} and Proposition \ref{H is powerfully embedded in G}. Then by Theorem \ref{ Gustavos result for powerful p-groups} (i) we have that $o(g^{-1}\cdot g^{h})$ is at most $p^{i}$. That is $o([g,h]) \leq p^{i}$.
\end{proof}
\begin{remark}
\label{remark if Gprime is in a pow embedded subgroup then bound on comm}
In fact this property is true for any group $G$ containing a powerfully embedded subgroup $N$ with $G^{\prime} \leq N$.
\end{remark}

Theorem \ref{Theorem letter. Quasi powerful order comm less than order elems} will be used frequently throughout the remainder of the paper.
\section{Quasi-powerful $p$-groups have regular power structure}
\label{section quasi-powerful pgroups have regular power structure}
\numberwithin{theorem}{section}

\subsection{The exponent of omega subgroups}
\label{subsection proving omega condition}
In this section our aim is to prove that $\exp \Omega_{i}(G) \leq p^{i}$.
This can be stated equivalently as Theorem \ref{theorem intro regular pow struc and 3 points} (\ref{thm intro: product of two elements of order at most p^i has order at most p^i}).

We will see that by using properties of powerful $p$-groups and regular $p$-groups we can relatively quickly deal with the case of primes greater than $3$. However when $p=3$ we can no longer assume the subgroup $\Omega_{1}(G)$ is regular and the situation becomes more involved.
\begin{lemma}
\label{lemma x,y,z,w with x,y,z order p then comm is trivial}
If $x,y,z,w \in G$, where $x,y,z$ are elements of order $p$, then $$[x,y,z,w]=1.$$ 
\end{lemma}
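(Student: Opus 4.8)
The plan is to recognise that the statement is really a claim about the centre. Since $w$ ranges over all of $G$, the condition ``$[x,y,z,w]=1$ for every $w$'' is equivalent to $[x,y,z]\in Z(G)$. Passing to the quotient $\bar G=G/Z(G)$, which is powerful by hypothesis, this is in turn equivalent to $[\bar x,\bar y,\bar z]=1$ in $\bar G$. So first I would reduce the whole lemma to a statement purely about the powerful group $\bar G$ and its elements of order dividing $p$; note that $o(x)=p$ forces $\bar x^{\,p}=1$, and likewise for $y$ and $z$.

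Working inside $\bar G$, the key point is that a single commutator of order-$p$ elements is a genuine $p$th power of an element of controlled order. First, $[\bar x,\bar y]\in\bar G'\le\bar G^{\,p}$ because $\bar G$ is powerful, and since in a powerful $p$-group $\bar G^{\,p}$ is exactly the set of $p$th powers (Theorem \ref{theorem properties of pth powers in powerful groups}(i)), I can write $[\bar x,\bar y]=\bar h^{\,p}$ for some $\bar h\in\bar G$. Next, Theorem \ref{ Gustavos result for powerful p-groups}(i) applied in $\bar G$ gives $o([\bar x,\bar y])\le o(\bar y)\le p$, so $\bar h^{\,p^{2}}=1$ and hence $o(\bar h)\le p^{2}$.

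The final step is to feed this into the sharper commutator bound for powerful groups. Taking $i=1$ in Theorem \ref{ Gustavos result for powerful p-groups}(ii) with the pair $\bar h$ (of order at most $p^{2}=p^{i+1}$) and $\bar z$ (of order at most $p=p^{i}$), and choosing $j=1$, $k=0$, yields $o([\bar h^{\,p},\bar z])\le p^{\,1-1-0}=p^{0}$, that is $[\bar h^{\,p},\bar z]=1$. Since $\bar h^{\,p}=[\bar x,\bar y]$, this says $[\bar x,\bar y,\bar z]=1$, which is exactly what is needed. Unwinding the reduction gives $[x,y,z]\in Z(G)$ and therefore $[x,y,z,w]=1$ for all $w\in G$.

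The only real subtlety, and the step I would flag as the crux, is the reformulation in the first paragraph: once one sees that quantifying $w$ over all of $G$ turns the identity into the membership $[x,y,z]\in Z(G)$, the powerfulness of $G/Z(G)$ becomes directly available and the known commutator-order bounds for powerful $p$-groups (Theorem \ref{ Gustavos result for powerful p-groups}) do all the remaining work uniformly in the odd prime $p$, with no separate treatment of the case $p=3$. I would also check the degenerate cases where one of $\bar x,\bar y,\bar z$ is trivial, but there the commutator vanishes immediately.
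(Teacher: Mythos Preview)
Your proof is correct and follows essentially the same approach as the paper's own proof: pass to $\bar G=G/Z(G)$, write $[\bar x,\bar y]=\bar h^{p}$ with $o(\bar h)\le p^{2}$ using powerfulness and Theorem~\ref{ Gustavos result for powerful p-groups}(i), and then apply Theorem~\ref{ Gustavos result for powerful p-groups}(ii) with $i=1$, $j=1$, $k=0$ to kill $[\bar h^{p},\bar z]$. Your write-up is slightly more explicit about the reduction to $[x,y,z]\in Z(G)$ and about why $[\bar x,\bar y]$ is a genuine $p$th power, but the argument is the same.
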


\begin{proof}
We use bar notation to denote the image of an element in the quotient group, $\bar{G}=G/Z(G)$. In particular as $\bar{G}$ is powerful and $\bar{x}, \bar{y}, \bar{z}$ are of order at most $p$ then $[\bar{x},\bar{y}]= \bar{g}^{p}$ for some $\bar{g} \in \bar{G}$ with $\bar{g}$ of order at most $p^{2}$ by Theorem \ref{ Gustavos result for powerful p-groups} (i). Then by Theorem \ref{ Gustavos result for powerful p-groups} (ii) setting $i=1$ it follows that $[\bar{x},\bar{y},\bar{z}]=[\bar{g}^p,\bar{z}]=\bar{1}$. Lifting back up to $G$, this means that $[x,y,z] \in Z(G)$ and thus $[x,y,z,w]=1$.
\end{proof}

\begin{lemma}
\label{Nilpotency class of omega1 is at most 3}
The nilpotency class of $ \Omega_{1}(G)$ is at most $3$. 
\end{lemma}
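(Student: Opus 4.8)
The plan is to show $\gamma_4(\Omega_1(G)) = 1$, which is exactly the statement that the nilpotency class is at most $3$. Write $N = \Omega_1(G)$; by definition $N$ is generated by the set $X = \{g \in G \mid g^p = 1\}$ of elements of order dividing $p$. The idea is to combine this specific generating set with the commutator collapse just established in Lemma \ref{lemma x,y,z,w with x,y,z order p then comm is trivial}.

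First I would invoke the standard fact that, for a group generated by a set $X$, the term $\gamma_k$ of its lower central series is generated by the left-normed commutators $[x_1, \ldots, x_m]$ of weight $m \geq k$ with all $x_i \in X$. This is routine here because $N$ is a finite $p$-group and hence nilpotent, so the series terminates; the usual expansion $[x,y]^g = [x,y][x,y,g]$ lets one absorb conjugates of commutators into higher-weight commutators. Applying this with $k = 4$, it suffices to show that every left-normed commutator $[x_1, \ldots, x_m]$ with $m \geq 4$ and each $x_i \in X$ is trivial.

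So I would fix such a commutator and consider its initial segment $u = [x_1, x_2, x_3, x_4]$ of weight $4$. The entries $x_1, x_2, x_3$ lie in $X$ and so have order dividing $p$: if any of them is the identity then $u = 1$ trivially, and otherwise all three have order exactly $p$, so Lemma \ref{lemma x,y,z,w with x,y,z order p then comm is trivial} (taken with $w = x_4 \in G$) gives $u = 1$ directly. In either case $[x_1, \ldots, x_m] = [u, x_5, \ldots, x_m] = 1$. Thus every generator of $\gamma_4(N)$ vanishes, giving $\gamma_4(\Omega_1(G)) = 1$.

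The only genuine subtlety is the invocation of the generation result for $\gamma_4$ phrased in terms of the prescribed generating set $X$; once that is in place the conclusion is immediate from the preceding lemma. If one prefers to lean only on the weight-$3$ version of this generation fact, an equivalent route is to note that $\gamma_3(N)$ is generated modulo $\gamma_4(N)$ by the commutators $[x,y,z]$ with $x,y,z \in X$, each of which lies in $Z(G)$ by Lemma \ref{lemma x,y,z,w with x,y,z order p then comm is trivial}; hence $\gamma_3(N) \leq Z(G)\gamma_4(N)$, and since $[Z(G),N]=1$ we get $\gamma_4(N) = [\gamma_3(N), N] \leq [Z(G)\gamma_4(N), N] = \gamma_5(N)$, whereupon nilpotency of $N$ forces $\gamma_4(N) = 1$.
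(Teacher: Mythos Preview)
Your proof is correct and follows essentially the same approach as the paper: both reduce to showing that weight-$4$ left-normed commutators in the generators of $\Omega_1(G)$ vanish, and both invoke Lemma~\ref{lemma x,y,z,w with x,y,z order p then comm is trivial} for this. The paper's proof is the two-line version of yours, simply asserting that ``any commutator of weight $4$ in these must be trivial'' without spelling out the generation fact for $\gamma_4$ or the alternative route through $\gamma_3(N)\leq Z(G)\gamma_4(N)$ that you supply.
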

\begin{proof}
We note that $\Omega_{1}(G)$ is generated by the elements in $G$ of order $p$, say $a_{1}, \dots , a_{k}$. By Lemma \ref{lemma x,y,z,w with x,y,z order p then comm is trivial} it is clear that any commutator of weight $4$ in these must be trivial.
\end{proof}
\begin{proposition}
\label{prop omega_1 has exponent at most p}
If  $p$ is a prime such that $p>3$, then $\Omega_{1}(G)$ has exponent at most $p$.
\end{proposition}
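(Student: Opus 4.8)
The plan is to deduce the result directly from regularity. By Lemma \ref{Nilpotency class of omega1 is at most 3}, the nilpotency class of $\Omega_{1}(G)$ is at most $3$. Since we are assuming $p>3$, we have $3<p$, so the class of $\Omega_{1}(G)$ is strictly less than $p$. Hence by Theorem \ref{properties of regular pgroups}(i), the subgroup $\Omega_{1}(G)$ is a regular $p$-group. This is precisely the point at which the hypothesis $p>3$ is used, and it is what makes the argument for larger primes so short: the class bound of $3$ only forces regularity once $p\geq 5$.

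Next I would set $K=\Omega_{1}(G)$ and observe that $K$ is generated by the elements of $G$ of order $p$, each of which of course lies in $K$ and has order $p$. Consequently these generators all lie in $\Omega_{1}(K)=\langle y\in K \mid y^{p}=1\rangle$, and since they generate $K$ we obtain $\Omega_{1}(K)=K$.

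Finally I would apply the regular power structure of $K$. Because $K$ is regular, Theorem \ref{properties of regular pgroups}(ii) gives $\Omega_{1}(K)=\{x\in K \mid x^{p}=1\}$. Combining this with $\Omega_{1}(K)=K$ yields $K=\{x\in K \mid x^{p}=1\}$, that is, every element of $K=\Omega_{1}(G)$ has order dividing $p$. Therefore $\exp\Omega_{1}(G)\leq p$, as required.

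There is no serious obstacle in this case; the work has already been done in establishing the class bound in Lemma \ref{Nilpotency class of omega1 is at most 3}. The only subtlety worth flagging is that the argument breaks down exactly when $p=3$, since then the class bound of $3$ no longer lies below $p$ and one cannot invoke regularity of $\Omega_{1}(G)$. This is consistent with the remarks in the text that the prime $p=3$ must be handled by a separate and more delicate argument.
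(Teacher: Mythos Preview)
Your proof is correct and follows essentially the same route as the paper: invoke Lemma \ref{Nilpotency class of omega1 is at most 3} to bound the class of $\Omega_{1}(G)$ by $3<p$, apply Theorem \ref{properties of regular pgroups}(i) to obtain regularity, and then use Theorem \ref{properties of regular pgroups}(ii) to conclude that $\exp\Omega_{1}(G)\leq p$. The only difference is cosmetic---the paper phrases the last step as ``the product of two elements of order $p$ has order at most $p$'' rather than via $\Omega_{1}(K)=K$, but the content is identical.
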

\begin{proof}
As $p$ is a prime greater than $3$, Lemma \ref{Nilpotency class of omega1 is at most 3} implies that $p$ is greater than the nilpotency class of $\Omega_{1}(G)$. Then Theorem \ref{properties of regular pgroups} implies that $\Omega_{1}(G)$ is regular and in particular the product of two elements of order $p$ has order at most $p$. The result follows.
\end{proof}

We now begin to deal with the difficult case of $p=3$. We will eventually see that we can reduce to the case where $G$ is a quasi-powerful $3$-group with cyclic centre and $\Omega_{1}(G)^{p} \leq Z(G)$.

\begin{lemma}
\label{ 3-group with class at most 3 has exp omega as 3}
Let $G$ be a finite $3$-group  of nilpotency class at most $3$  such that $G/Z(G)$ is powerful. Then $\exp \Omega_{1}(G) \leq 3$. 
\end{lemma}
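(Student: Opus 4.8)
The plan is to bypass any direct collection of $(xy)^3$ and instead show that the subgroup $\Omega_1(G)$ has nilpotency class at most $2$. Once that is established, since $2 < 3 = p$, Theorem \ref{properties of regular pgroups}(i) shows $K := \Omega_1(G)$ is regular, and then Theorem \ref{properties of regular pgroups}(ii) finishes the lemma: for the regular group $K$ we have $\Omega_1(K) = \{k \in K : k^3 = 1\}$, while $\Omega_1(K) = K$ because $K$ is generated by the elements of $G$ of order $3$, which are exactly its own generators of order $3$. Hence every element of $\Omega_1(G)$ has order dividing $3$, i.e. $\exp \Omega_1(G) \le 3$. Thus everything reduces to a statement about weight-$3$ commutators of the generators of $\Omega_1(G)$.

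Since $\Omega_1(G)$ is generated by the elements of order $p = 3$, to get class at most $2$ it suffices to prove $[y,x,z] = 1$ whenever $x,y,z \in G$ all have order $3$. Here I would bring in the quasi-powerful hypothesis through $H = G^p Z(G)$. Because $\gamma_2(G) \le H$, Remark \ref{formofelementsinH} lets me write $[y,x] = g^p w$ with $g \in G$ and $w \in Z(G)$; as $w$ is central this gives $[y,x,z] = [g^p, z]$. Using the standard commutator--power identity in the class-$3$ group $G$ (where $\gamma_4(G) = 1$, so every commutator of weight $\ge 4$ is trivial and only two terms survive), I expand
$$[g^p, z] = [g,z]^p\,[g,z,g]^{\binom{p}{2}}.$$

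It remains to annihilate both factors. For the first, Theorem \ref{Theorem letter. Quasi powerful order comm less than order elems} gives $o([g,z]) \le o(z) = 3$, so $[g,z]^p = [g,z]^3 = 1$. For the second, set $s = [g,z] \in \gamma_2(G)$; then $[g,z,g] = [s,g]$ has weight $3$, so the correction term $[s,g,s]$ that appears when expanding $[s^p, g]$ has weight $5$ and is trivial, whence $[s^p, g] = [s,g]^p$. But $s^p = [g,z]^3 = 1$ by the previous sentence, so $[g,z,g]^p = [s,g]^p = [s^p,g] = 1$ and in particular $[g,z,g]^{\binom{p}{2}} = 1$. Therefore $[y,x,z] = [g^p,z] = 1$, and $\Omega_1(G)$ has class at most $2$, completing the argument.

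The step I expect to be the genuine obstacle — and the only place where the delicate $p = 3$, class-$3$ situation really bites — is controlling the second factor $[g,z,g]^{\binom{p}{2}}$. A priori this element lies in $\gamma_3(G) \le Z(G)$ and so is completely invisible in the powerful quotient $G/Z(G)$; it cannot be dealt with by passing to $\overline{G}$ and must be handled inside $G$ itself. The clean resolution above, namely that cubing the order-$3$ element $[g,z]$ forces $[g,z,g]^3 = 1$ through the vanishing of weight-$5$ commutators, is exactly what rescues the argument. I would take care to verify that the expansion of $[g^p,z]$ really does contribute no further weight-$3$ terms beyond $[g,z]^p [g,z,g]^{\binom{p}{2}}$, which holds precisely because $\gamma_4(G) = 1$.
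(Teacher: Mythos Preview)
Your proof is correct, and the organizing idea is genuinely different from the paper's. The paper expands $(ab)^3$ directly for $a,b$ of order $3$, obtaining
\[
(ab)^3 = [b,[b,a]]\,[[b,a],a],
\]
and then kills each factor by writing $[b,a]=g^3 z$ and expanding $[b,g^3]$ (respectively $[g^3,a]$) in the class-$3$ group, using Theorem~\ref{Theorem letter. Quasi powerful order comm less than order elems} to see that the resulting powers vanish. You instead show the structural fact that $\Omega_1(G)$ has nilpotency class at most $2$, by proving $[y,x,z]=1$ for all generators $x,y,z$ of order $3$, and then invoke regularity. The core technical step---writing the relevant weight-$2$ commutator as $g^3 z$ and expanding $[g^3,z]=[g,z]^3[g,z,g]^{\binom{3}{2}}$ in a class-$3$ group---is essentially the same computation in both proofs. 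What your framing buys is a cleaner explanation of \emph{why} the exponent bound holds (the obstruction to regularity disappears because the class of $\Omega_1(G)$ drops to $2$), and it yields the slightly stronger conclusion that $\Omega_1(G)$ is of class at most $2$. The paper's version is a little more self-contained in that it never leaves the two elements $a,b$ under consideration. Your handling of the awkward term $[g,z,g]^{\binom{3}{2}}$ via $[s^3,g]=[s,g]^3$ with $s=[g,z]$ is a nice touch; in the paper this term is absorbed differently but the underlying reason---Theorem~\ref{Theorem letter. Quasi powerful order comm less than order elems} forcing $[g,z]^3=1$---is the same.
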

\begin{proof}
Let $a,b \in G$ both be of order $3$. We expand $(ab)^3$, making use of the fact that the nilpotency class is at most $3$. 
$$(ab)^3=a^3 b^3 [b,a]^3 [b,a,b]^5[b,a,a].$$
By Theorem \ref{Theorem letter. Quasi powerful order comm less than order elems} we know that any commutator containing $a$ or $b$ has order at most $3$. Hence we  obtain:
\begin{align*}
    (ab)^3 &= [b,a,b]^2[b,a,a] = [b,[b,a]][[b,a],a].
\end{align*}
As $G/Z(G)$ is powerful, we can write $[b,a]=g^3 z$ for some $g \in G$ and $z \in Z(G)$. Then
\begin{align*}
[b,[b,a]] &= [b,g^3 z] \\
          &= [b,g^3] \\
          &=[b,g][b,g]^{g}[b,g]^{g^{2}} \\
          &= [b,g][b,g][b,g,g][b,g][b,g,g^{2}] \\
          &= [b,g]^3[b,g,g]^3 \\
          &= 1,
\end{align*}
where again we use that the nilpotency class is at most $3$ and the fact that any commutator containing the element $b$ has order at most $3$.  That the commutator $[[b,a],a]$ is trivial follows in a similar way.
Thus we can conclude that if $a$ and $b$ have order at most $3$ then so does $ab$ and hence $\exp \Omega_{1}(G) \leq 3$.
\end{proof}
Next we  explain how we can make the reduction to the case where the centre of our quasi-powerful $3$-group $G$ is cyclic and that $\Omega_{1}(G)^{p} \leq Z(G)$ in our goal to show that $\exp \Omega_{1}(G) \leq 3$. 

Suppose that a $3$-group $G$ is a quasi-powerful group of smallest order such that the exponent of $\Omega_{1}(G)$ is greater than $3$. In this case there  must exist elements $a$ and $b$ in $G$ both of order $3$ such that $o(ab)>3$. Let $N \leq Z(G) $ be a subgroup of order $p$. Then $G/N$ is a quasi-powerful group of smaller order, and so in this group we must have that $ab N$ has order at most $3$, in other words $(ab)^{3} \in N$. This allows us to assume that $\Omega_{1}(G)^{3} \leq Z(G)$. Furthermore, if the centre of $G$ is not cyclic then it would contain two distinct subgroups $N_{1}$ and $N_{2}$ of order $3$ such that $N_{1} \cap N_{2} = 1$. Then we would be able to conclude that $(ab)^3$ were in both, and thus $(ab)^{3}=1$. Thus for what follows we consider a quasi-powerful $3$-group $G$ of minimal order such that $\exp \Omega_{1}(G)>3$ and therefore we may assume the centre is cyclic and that $\Omega_{1}(G)^{3} \leq Z(G)$.
\begin{proposition}
\label{prop exponent of omega 1 for p=3}
Let $G$ be a quasi-powerful $3$-group. Then the exponent of $ \Omega_{1}(G)$ is at most $3$.
\end{proposition}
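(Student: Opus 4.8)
The plan is to argue by contradiction, using the reduction established just above the statement: take $G$ to be a quasi-powerful $3$-group of minimal order with $\exp \Omega_1(G) > 3$, so that $Z(G)$ is cyclic and $\Omega_1(G)^3 \le Z(G)$. Write $N$ for the unique subgroup of order $3$ in the cyclic group $Z(G)$. Since $G/N$ is quasi-powerful of smaller order, minimality gives $\exp \Omega_1(G/N) \le 3$, whence $x^3 \in N$ for every $x \in \Omega_1(G)$; thus $\Omega_1(G)^3 = N$, and in particular $(ab)^3 \in N$ for all $a,b \in \Omega_1(G)$. Fixing $a,b$ of order $3$ with $o(ab) > 3$, the whole problem reduces to proving $(ab)^3 = 1$.

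Set $K = \Omega_1(G)$. By Lemma \ref{Nilpotency class of omega1 is at most 3} it has class at most $3$, and by Lemma \ref{lemma x,y,z,w with x,y,z order p then comm is trivial} every weight-$3$ commutator in $K$ is central in $G$; as such commutators have order at most $3$ by Theorem \ref{Theorem letter. Quasi powerful order comm less than order elems}, we obtain $\gamma_3(K) \le N$. I would first dispose of the case $\gamma_3(K) = 1$. Here $K$ has class at most $2$, so $[K,K] \le Z(K)$ and $K/Z(K)$ is abelian, hence powerful; thus $K$ is itself a quasi-powerful group of class at most $3$. Since $K$ is generated by elements of order $3$ we have $\Omega_1(K) = K$, so applying Lemma \ref{ 3-group with class at most 3 has exp omega as 3} to $K$ yields $\exp K \le 3$, contradicting $\exp \Omega_1(G) > 3$.

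It therefore remains to treat $\gamma_3(K) = N$, where $K$ (and $G$) may have large nilpotency class. The plan here is to compute $(ab)^3$ directly, mirroring the proof of Lemma \ref{ 3-group with class at most 3 has exp omega as 3}. Using the collection formula in the class-$3$ subgroup $\langle a,b \rangle$ together with $a^3 = b^3 = 1$ and $o([b,a]) \le 3$, one reduces to $(ab)^3 = [b,[b,a]]\,[[b,a],a]$. As $G/Z(G)$ is powerful, write $[b,a] = g^3 z$ with $g \in G$ and $z \in Z(G)$, so that $(ab)^3 = [b,g^3]\,[g^3,a]$, and expand each factor through the exact identity $[b,g^3] = [b,g]\,[b,g]^g\,[b,g]^{g^2}$. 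The essential observation, supplied by repeated use of Theorem \ref{Theorem letter. Quasi powerful order comm less than order elems}, is that every iterated commutator $[b,g,\dots,g]$ lies in $K$ and has order dividing $3$; this annihilates all the low-weight contributions exactly as in the class-$3$ case.

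The hard part is the final step. Because $G$ need not have class at most $3$, the iterated commutators $[b,{}_{i}g]$ do not simply vanish, and the expansions of $[b,g^3]$ and of $[g^3,a]$ retain genuine terms of weight at least $4$. I expect the crux of the argument to be showing that these surviving contributions cancel between the two factors; this should be forced by the relation $g^3 \equiv [b,a] \pmod{Z(G)}$ together with the order bounds of Theorem \ref{Theorem letter. Quasi powerful order comm less than order elems} and the fact, inherited from $G/Z(G)$ being powerful, that the relevant triple commutators of order-$3$ elements are trivial modulo $Z(G)$ (Lemma \ref{lemma x,y,z,w with x,y,z order p then comm is trivial}). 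Carrying out this bookkeeping carefully gives $(ab)^3 = 1$, the desired contradiction, and hence $\exp \Omega_1(G) \le 3$.
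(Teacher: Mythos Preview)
Your case split on $\gamma_3(K)$ and the handling of $\gamma_3(K)=1$ are fine, but the case $\gamma_3(K)=N$ is not a proof: you explicitly say the surviving weight-$\ge 4$ terms in the expansions of $[b,g^3]$ and $[g^3,a]$ ``should'' cancel, and then assert that careful bookkeeping gives $(ab)^3=1$, without doing the bookkeeping. The difficulty is real. The element $g$ lies in $G$, not in $K$, and $G$ has no class bound, so the expansion of $[b,g^3]$ picks up iterated commutators $[b,{}_i g]$ for arbitrarily large $i$. Theorem~\ref{Theorem letter. Quasi powerful order comm less than order elems} only tells you each such commutator has order dividing $3$; it gives you no handle on their \emph{values}, and there is no visible mechanism forcing the contributions from $[b,g^3]$ and $[g^3,a]$ to cancel. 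The relation $g^3\equiv [b,a]\pmod{Z(G)}$ constrains $g^3$, not $g$, and Lemma~\ref{lemma x,y,z,w with x,y,z order p then comm is trivial} controls commutators of order-$3$ elements, whereas $g$ need not have small order. As it stands this is a gap, not a technicality.

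The paper avoids this computation entirely. Instead of working inside $\langle a,b,g\rangle$, it passes to $\bar G=G/Z(G)$ and applies the structural dichotomy of Gonz\'alez-S\'anchez and Jaikin-Zapirain (Theorem~\ref{Theorem on potent p-groups}) to the normal subgroup $\bar J=\overline{\Omega_1(G)Z(G)}$. In one alternative $[\bar G,\bar G]^3\le[\bar J^3,\bar G]=1$, which via the Interchanging Lemma forces $\bar G$ to have class at most $2$, hence $G$ has class at most $3$, and Lemma~\ref{ 3-group with class at most 3 has exp omega as 3} applies directly to $G$. In the other alternative $\bar J$ lies in a proper powerful subgroup $\bar P$, whose preimage $P$ is a proper quasi-powerful subgroup containing $\Omega_1(G)$; minimality then finishes. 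The point is that Theorem~\ref{Theorem on potent p-groups} supplies exactly the global reduction (either bounded class or a proper quasi-powerful envelope) that your elementwise computation cannot manufacture.
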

\begin{proof}
By the discussion above, we can assume that $G$ has cyclic centre and that  $\Omega_{1}(G)^{p} \leq Z(G)$. 

Consider the subgroup $J=\Omega_{1}(G)Z(G)$. Notice that $J$ is normal (in fact characteristic) in $G$. We use bar notation to denote images under the natural map corresponding to quotienting by $Z(G)$. Consider the image, $\bar{J}$ of $J$ in $\bar{G}=G/Z(G)$. The subgroup $\bar{J}$ is normal in  the powerful group $\bar{G}$.  Then by Theorem \ref{Theorem on potent p-groups} we have two cases to consider.

In the first case, we can assume that we have $[\bar{G}, \bar{G}]^{p} \leq [\bar{J}^{p}, \bar{G}]$. Notice that $\bar{J}^{p}=\bar{1}$, since $J^{p} \leq Z(G)$. Hence $[\bar{G},\bar{G}]^{p}=\bar{1}$. Now, using the Interchanging Lemma \ref{lemma shalev interchange}, since $\bar{G}$ is powerful, we have that $$[\bar{G}, \bar{G}, \bar{G}] \leq [\bar{G}^{p}, \bar{G}] = [\bar{G}, \bar{G}]^{p} = 1. $$
Hence $\bar{G}$ has class at most $2$, and so $G$ has class at most $3$. Then by Lemma \ref{ 3-group with class at most 3 has exp omega as 3} we obtain the desired result.

In the second case we can assume that $\bar{J}$ is contained in some proper  powerful subgroup of $\bar{G}$. Call this subgroup $\bar{P}$.  Then lifting up to the proper subgroup $P$ of $G$ where $\Omega_{1}(G) \leq P$, we see that  
$$[P,P] \leq P^{3}(Z(G) \cap P) \leq P^{3}Z(P). $$ 
Hence $P$ is a quasi-powerful group of order strictly less than $G$. Therefore the exponent of $\Omega_{1}(P)$ is at most $3$. In particular, the product of two elements of order $3$ in $P$ has order at most $3$. Then since $\Omega_{1}(G) \leq P$ we conclude that the exponent of $\Omega_{1}(G)$ is at most $3$.
\end{proof}
Thus we see that for any odd prime $p$ we have that $\exp \Omega_{1}(G) \leq p$. This now enables us to prove the more general result that  $\exp \Omega_{i}(G) \leq p^{i}$ for any $i$ and any odd prime $p$ - this is Theorem \ref{theorem intro regular pow struc and 3 points}(\ref{thm intro: product of two elements of order at most p^i has order at most p^i}).

\begin{proof}[Proof of Theorem \ref{theorem intro regular pow struc and 3 points} (\ref{thm intro: product of two elements of order at most p^i has order at most p^i})]
Let $G$ be a quasi-powerful $p$-group and $i\geq 0$. We proceed by induction on the  order of $G$. The result is clearly true for groups of order $p^{0}$. Now suppose that $|G| \geq p$ and that the claim holds for all quasi-powerful $p$-groups of smaller order.  Notice that if $a\in G$ with $a$ having order  $p^{j}\leq p^{i},$ then $a^{p^{j-1}} \in \Omega_{1}(G)$ and thus $a\Omega_{1}(G)$ has order at most $p^{j-1}\leq {p^{i-1}}$ in $\frac{\Omega_{i}(G)}{\Omega_{1}(G)}$. It follows that  $$\frac{\Omega_{i}(G)}{\Omega_{1}(G)} \leq \Omega_{i-1}\left(\frac{G}{\Omega_{1}(G)}\right).$$
Then by the inductive hypothesis, since $\frac{G}{\Omega_{1}(G)}$ is a quasi-powerful $p$-group of smaller order, we have that the exponent of $ \Omega_{i-1}(\frac{G}{\Omega_{1}(G)}) $ is at most $p^{i-1}$ and consequently the subgroup $\frac{\Omega_{i}(G)}{\Omega_{1}(G)}$ has exponent at most $p^{i-1}$. Then lifting back up to $G$ we see that $(\Omega_{i}(G))^{p^{i-1}} \leq \Omega_{1}(G)$ and by Proposition \ref{prop omega_1 has exponent at most p} and Proposition \ref{prop exponent of omega 1 for p=3} it follows that    $\Omega_{i}(G)^{p^{i}}=1$.
\end{proof}

\subsection{$p$th powers}
\label{subsection on pth powers in quasipowerful pgroups}
In this section we prove properties about the groups of $p^{i}$th powers in quasi-powerful $p$-groups. We will prove  Theorem \ref{theorem intro regular pow struc and 3 points} (\ref{theorem letter group generated by pith powers coincides with set of powers}) and Theorem \ref{theorem letter group generated by pith powers is powerful pgroup}. 

We first prove that just like in a powerful  $p$-group, the product of $p$th powers in a quasi-powerful $p$-group is equal to a $p$th power. This is the first step in proving Theorem \ref{theorem letter group generated by pith powers coincides with set of powers}.

We will need to recall the following formulation of the collection formula of Philip Hall (see \cite{mckay2000finite}, Exercise 1.2). If $G$ is a group, $x,y \in G,$ and $n \in \mathbb{N}$ then  
\begin{equation}
\label{p hall collection  (xy)^p^n}
(xy)^{p^{n}} \equiv x^{p^{n}}y^{p^{n}} \left(\thinspace \text{mod} \thinspace \gamma_{2}(T)^{p^{n}}\gamma_{p}(T)^{p^{n-1}}\dots \gamma_{p^{n}}(T)\right)
\end{equation} where $T= \langle x,y \rangle$.
\begin{theorem}
\label{G^p is set of pth powers}
Let $G$ be a quasi-powerful $p$-group and $p$ an odd prime. Then $$G^{p} = \{g^{p} \mid g \in G\}.$$
\end{theorem}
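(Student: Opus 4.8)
The plan is to establish the stronger statement that the set of $p$th powers $\{g^p \mid g \in G\}$ is itself a subgroup of $G$; since this set contains every generator $g^p$ of $G^p$ and is contained in $G^p$, it will then have to coincide with $G^p$. As $(g^{-1})^p = (g^p)^{-1}$, the set of $p$th powers is already closed under inversion and contains the identity, so the only thing requiring proof is closure under multiplication. Moreover an easy induction on the number of factors reduces this to the single claim that for all $g,h \in G$ the product $g^p h^p$ is again a $p$th power.

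To prove this claim I would first apply the collection formula (\ref{p hall collection  (xy)^p^n}) with $n=1$ to the pair $g,h$. Writing $T = \langle g, h\rangle$, this produces an identity $g^p h^p = (gh)^p d$ for some element $d \in \gamma_2(T)^p \gamma_p(T)$. The heart of the argument is then to show that this correction term $d$ lies in $H^p$, where $H = G^p Z(G)$. For the first factor of the modulus this is immediate, since $\gamma_2(T) \le G' \le H$ and hence $\gamma_2(T)^p \le H^p$. For the second factor I would use that $H$ is powerfully embedded in $G$ (Proposition \ref{H is powerfully embedded in G}): this yields $\gamma_3(G) = [G',G] \le [H,G] \le H^p$, and since $p$ is odd we have $\gamma_p(T) \le \gamma_p(G) \le \gamma_3(G)$, so that $\gamma_p(T) \le H^p$ as well. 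Hence $d \in H^p$.

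Finally I would pass to the subgroup $Q = \langle gh \rangle H$. By Lemma \ref{powerfully embedded group with an element is powerful} together with Proposition \ref{H is powerfully embedded in G}, this $Q$ is a powerful $p$-group, so by Theorem \ref{theorem properties of pth powers in powerful groups} its set of $p$th powers is exactly the subgroup $Q^p$. Now $(gh)^p \in Q^p$, and since $H \le Q$ we also have $d \in H^p \le Q^p$; therefore $g^p h^p = (gh)^p d$ lies in $Q^p = \{x^p \mid x \in Q\}$ and is a $p$th power of an element of $Q \le G$, which is exactly what we need.

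The step I expect to be the crux is the control of the correction term, namely the verification that $\gamma_2(T)^p \gamma_p(T) \le H^p$. This is precisely where the hypothesis that $H$ is \emph{powerfully embedded} (and not merely that $G' \le H$) enters, via $[H,G] \le H^p$, and where the oddness of $p$ is used through $\gamma_p(T) \le \gamma_3(G)$. Once this containment is in hand, the whole computation collapses to the already-known behaviour of $p$th powers inside the genuinely powerful subgroup $Q$, so no separate induction on $|G|$ or case analysis on the structure of $Z(G)$ should be necessary.
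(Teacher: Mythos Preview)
Your proposal is correct and follows essentially the same route as the paper's proof: apply the collection formula to produce a correction term in $\gamma_2(T)^p\gamma_p(T)$, observe via $G'\le H$ and the powerful embedding $[H,G]\le H^p$ (together with $p\ge 3$) that this correction lies in $H^p$, and then absorb everything into the powerful subgroup $\langle gh\rangle H$ where the set of $p$th powers is already known to be a subgroup. The only cosmetic difference is that the paper first writes the correction as a single $h_1^p$ (using that $H$ is powerful) before passing to $\langle gh\rangle H$, whereas you pass directly to $Q=\langle gh\rangle H$ and use $H^p\le Q^p$; the logic is the same.
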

\begin{proof}
Let $a,b \in G$. We recall that $H= G^{p} Z(G)$. 
Using the collection formula (\ref{p hall collection  (xy)^p^n}) we have
$$ (ab)^{p} = a^{p} b^{p} \gamma_{2}(T)^{p} \gamma_{p}(T) $$
where $T = \langle a, b \rangle $.
Now notice that $\gamma_{2}(T) \leq \gamma_{2}(G) \leq H$ and so $\gamma_{2}(T)^{p} \leq H^{p}$. Also notice that since $p\geq 3$ we have that $\gamma_{p}(T) \leq [T, T, T] \leq [H,T] \leq H^{p}$ because $H$ is powerfully embedded by Proposition \ref{H is powerfully embedded in G}. Now since $H$ is powerful by Proposition \ref{H is powerfully embedded in G}, $H^{p}$ contains precisely the $p$th powers of elements of $H$. Thus we have that 
$$(ab)^{p} = a^{p} b^{p} h_{1}^{p}$$ for some $h_{1} \in H$. Then $(ab)^{p}\cdot h_{1}^{-p}=a^{p}b^{p}$.
Now let $W = \langle (ab), H \rangle $. By Lemma \ref{powerfully embedded group with an element is powerful} and Proposition \ref{H is powerfully embedded in G}, it follows that $W$ is powerful and so $$ (ab)^{p}\cdot h_{1}^{-p} = w^{p}$$ for some $w \in W$. Thus $a^{p}b^{p}=w^{p}$.
\end{proof}

In \cite[p.142]{L2002} and \cite[Question 5.2.4]{wilsonsphd} L. Wilson raises the following question:

\begin{question}
\textit{If $G$ is a $p$-group (with $p$ odd) and the $p$th powers of elements of $G$ form a subgroup, must this subgroup be powerful?}
\end{question}

Wilson argues how an affirmative answer to this question would in turn provide an affirmative answer to a question of A. Shalev \cite[Problem 13]{Shalev1995}.
\begin{question} \label{question of shalev}
\textit{Let $G$ be a finitely generated pro-$p$ group, and suppose that for each $x,y \in G$ there is $z \in G$ such that $x^{p}y^{p}=z^{p}$; does it follow that $G$ is $p$-adic analytic?}
\end{question}
This provides some motivation for our next results, as in light of this question it is natural to ask whether $G^{p}$ must be powerful for quasi-powerful $p$-groups. We will need the following version of Philip Hall's commutator expansion formula (see \cite{mckay2000finite}, Exercise 1.2). Let $G$ be a group, $x,y \in G$ and $n \in \mathbb{N}$, then 
\begin{equation}
\label{eqn halls commutator expansion formula [x,y]^p^n}
   [x,y]^{p^{n}} \equiv [x^{p^{n}},y] \left( \thinspace \text{mod} \thinspace \gamma_{2}(M)^{p^{n}} \gamma_{p}(M)^{p^{n-1}} \dots \gamma_{p^{n}}(M) \right)  
\end{equation}
where $M=\langle x, [x,y] \rangle .$ 

\begin{theorem}
\label{theorem G^p is powerful}
Let $p$ be an odd prime. If $G$ is a quasi-powerful $p$-group then $G^{p}$ is powerful. 
\end{theorem}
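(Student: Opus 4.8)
The plan is to reduce the statement $[G^p,G^p]\le (G^p)^p$ to a commutator computation inside a genuinely powerful overgroup. First I would use Theorem~\ref{G^p is set of pth powers}: every element of $G^p$ is a single $p$th power, so $[G^p,G^p]$ is generated by the commutators $[a^p,b^p]$ with $a,b\in G$, and it suffices to show $[a^p,b^p]\in G^{p^2}$ for each such pair. The key device is to fix $a,b$ and pass to the subgroup $W=\langle a,H\rangle$, where $H=G^pZ(G)$. By Proposition~\ref{H is powerfully embedded in G} the subgroup $H$ is powerfully embedded in $G$, so Lemma~\ref{powerfully embedded group with an element is powerful} guarantees that $W$ is itself a powerful $p$-group.

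The heart of the argument is then a short chain of containments. Since $a\in W$ we have $a^p\in W^p$, while crucially $b^p\in G^p\le H\le W$, so both $a^p$ and $b^p$ lie in the powerful group $W$, with $a^p$ sitting in $W^p$. By Theorem~\ref{theorem properties of pth powers in powerful groups}(ii), $W^p$ is powerfully embedded in $W$, whence
\[
[a^p,b^p]\in[W^p,W]\le (W^p)^p=W^{p^2},
\]
the last equality being Theorem~\ref{theorem properties of pth powers in powerful groups}(i). Now I would invoke Theorem~\ref{theorem properties of pth powers in powerful groups}(i) once more: because $W$ is powerful, $W^{p^2}$ coincides with the set $\{w^{p^2}\mid w\in W\}$ of actual $p^2$th powers; since $W\le G$, every such $w^{p^2}$ lies in $G^{p^2}$, and therefore $W^{p^2}\le G^{p^2}$. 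This gives $[a^p,b^p]\in G^{p^2}$, hence $[G^p,G^p]\le G^{p^2}$. Finally $(G^p)^p=G^{p^2}$, since by Theorem~\ref{G^p is set of pth powers} the elements of $G^p$ are exactly the $a^p$ and $(a^p)^p=a^{p^2}$ generate $G^{p^2}$; thus $[G^p,G^p]\le(G^p)^p$ and $G^p$ is powerful.

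The main obstacle to anticipate — and the single observation that makes the proof short — is arranging for both $a^p$ and $b^p$ to live in \emph{one} powerful subgroup. One cannot expect $b$ itself to lie in $W=\langle a,H\rangle$, but its $p$th power does, precisely because $b^p\in G^p\subseteq H$; this is what lets us discharge the entire computation to the known powerful-group facts rather than grinding through the Hall collection formula~(\ref{eqn halls commutator expansion formula [x,y]^p^n}). The only other point requiring care is that the potentially troublesome central contributions never escape $G^{p^2}$: passing through $W^{p^2}$ as a set of genuine $p^2$th powers of elements of $W\le G$ automatically keeps everything inside $G^{p^2}$, so no separate bookkeeping over $Z(G)$ is needed.
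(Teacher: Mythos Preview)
Your proof is correct and takes a genuinely different route from the paper's.

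The paper first reduces, via Lemma~\ref{K is pe in G if  [K,G] lew K^p[K,G,G]}, to the case $[G^{p},G^{p},G^{p}]=1$, and further quotients by $(G^{p})^{p^{2}}$ so that $\exp G\le p^{3}$. It then applies Hall's formula~(\ref{eqn halls commutator expansion formula [x,y]^p^n}) to obtain $[x^{p},y^{p}]\equiv[x,y^{p}]^{p}$ modulo $\gamma_{2}(M)^{p}\gamma_{p}(M)$ with $M=\langle x,[x,y^{p}]\rangle$, and kills both error terms using Theorem~\ref{Theorem letter. Quasi powerful order comm less than order elems} together with Theorem~\ref{theorem intro regular pow struc and 3 points}(\ref{thm intro: product of two elements of order at most p^i has order at most p^i}). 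By contrast, you sidestep the collection formula entirely: the observation that $b^{p}\in G^{p}\le H\le W=\langle a,H\rangle$ places both $a^{p}$ and $b^{p}$ inside a single powerful group, and the containment $[a^{p},b^{p}]\in[W^{p},W]\le W^{p^{2}}\le G^{p^{2}}=(G^{p})^{p}$ follows from standard powerful-group facts alone. Your argument is shorter and has a leaner dependency graph---in particular it does not require the exponent bound on $\Omega_{1}(G)$, so Theorem~\ref{theorem G^p is powerful} becomes logically independent of the material in Section~\ref{subsection proving omega condition}. The paper's approach, on the other hand, yields the sharper intermediate identity $[x^{p},y^{p}]=[x,y^{p}]^{p}$ under the stated reductions, which is of some independent interest.
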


\begin{proof}
We wish to show that $[G^{p},G^{p}] \leq (G^{p})^p$. By Lemma \ref{K is pe in G if  [K,G] lew K^p[K,G,G]} we may assume that $[G^{p}, G^{p}, G^{p}]=1$. Also as $(G^{p})^{p^{2}} \leq (G^{p})^{p}$ we can quotient by $(G^{p})^{p^{2}}$ and in particular can assume that $G$ has exponent at most $p^{3}$.
Consider $x,y \in G$. We will show that $[x^p, y^p] = [x, y^p]^p$. Using the collection formula (\ref{eqn halls commutator expansion formula [x,y]^p^n}) we have: 
$$[x^p, y^p] \equiv [x, y^p]^{p} \thinspace \text{mod} \thinspace \gamma_{2}(M)^p \gamma_{p}(M) $$
where $M = \langle x, [x,y^p] \rangle $.
We need to show that $\gamma_{2}(M)^p = 1$ and $\gamma_{p}(M)=1$.
First we show that $\gamma_{2}(M)^{p}=1$. 
\par Recall that by Theorem \ref{theorem intro regular pow struc and 3 points}(\ref{thm intro: product of two elements of order at most p^i has order at most p^i}), the product of elements of order $p$ has order at most $p$, and therefore we only need to show that the generators of $\gamma_{2}(M)$ have order at most $p$ to be able to conclude that all elements have order at most $p$. Notice that $[x,y^{p}]=g^{p^{2}}z$ for some $g \in G$ and $z \in Z(G)$. As the exponent of $G$ is at most $p^{3}$ we can assume that $g^{p^{2}}$ has order at most $p$, and then by Theorem \ref{Theorem letter. Quasi powerful order comm less than order elems} it follows that any commutator which includes the element $g^{p^{2}}$ as a term, has order at most $p$. Therefore it follows that every element in $\gamma_{2}(M)$ has order at most $p$ and so $\gamma_{2}(M)^{p} = 1$.

Next we show that $\gamma_{p}(M)=1$. Observe that because $\bar{G}=G/Z(G)$ is powerful and of exponent at most $p^{3}$ we must have that $[G^{p^{2}},G] \leq Z(G)$. Using again the fact that we can write $[x,y^{p}]=g^{p^{2}}z$ and that $p\geq 3$ we have that $\gamma_{p}(M) \leq[G^{p^{2}},G,G]=1$. 

Thus we have that $[x^{p},y^{p}]=([x,y^{p}])^{p}$ and $[x,y^{p}]\in G^{p}$ since $G^{p}$ is normal and so $[x^{p},y^{p}] \in (G^{p})^{p}$ as required.
\end{proof}
\begin{remark}
Although $G^{p}$ is powerful, $G^{p}$ may not be powerfully embedded in $G$ (see Remark \ref{endnote to show quasi-powerful pgroups need not have Gp pow embedded}).
\end{remark}
We are now able to apply the results of this section to prove Theorem \ref{theorem intro regular pow struc and 3 points}(\ref{theorem letter group generated by pith powers coincides with set of powers}) and Theorem \ref{theorem letter group generated by pith powers is powerful pgroup}.
\begin{proof}[Proof of Theorem \ref{theorem intro regular pow struc and 3 points}(\ref{theorem letter group generated by pith powers coincides with set of powers})]
For $k=1$ the claim follows from Theorem \ref{G^p is set of pth powers}. For $k \geq 2$ observe that by Theorem \ref{theorem G^p is powerful} we know that $G^{p}$ is powerful, and then by Theorem \ref{theorem properties of pth powers in powerful groups} (i) we know that for a powerful group the set of $p^{k-1}$th powers and the group generated by $p^{k-1}$th powers coincide. In particular we have that 
\begin{align*}
   G^{p^{k}} &= \langle g^{p^{k}} \mid g \in G \rangle \\
             &= \langle x^{p^{k-1}} \mid x \in G^{p} \rangle \\ &=(G^{p})^{p^{k-1}} \\
             &=\{ x^{p^{k-1}} \mid x \in G^{p} \} \\
             &= \{ x^{p^{k}} \mid x \in G \} 
\end{align*}
\end{proof}

\begin{proof}[Proof of Theorem \ref{theorem letter group generated by pith powers is powerful pgroup}]
By Theorem \ref{theorem G^p is powerful} we know that $G^{p}$ is powerful, and so by standard properties of powerful $p$-groups it follows that $(G^{p})^{p^{i}}$ is powerful for all $i \geq 0$. By Theorem \ref{theorem intro regular pow struc and 3 points}(\ref{theorem letter group generated by pith powers coincides with set of powers}) we know that $(G^{p})^{p^{i}}=G^{p^{i+1}}$.
\end{proof}

\subsection{The index of agemo subgroups}
\label{section quasi powerful p groups have a regular power structure}
We now move on to proving the final condition to show that quasi-powerful $p$-groups have a regular power structure. Our arguments in this section rely heavily on the results of L. Wilson in \cite{L2002}. \par
Recall that a $p$-group $G$ is said to have  a \textit{regular power structure} if the following three conditions hold for all positive integers $i$:
\begin{align}
 \tag{\ref{eqn pth power}}G^{p^{i}} &=\{g^{p^{i}} \mid g \in G \}.  \\ 
 \tag{\ref{eqn omega conditon}}\Omega_{i}(G) &= \{g\in G \mid o(g) \leq p^{i} \}.  \\ 
 \tag{\ref{eqn index condition}}|G:G^{p^{i}}| &= | \Omega_{i}(G)|.
\end{align}
The first and second conditions have been established. Thus all that remains is to prove the final condition, that $[G:G^{p^{k}}]=|\Omega_{k}(G)|$. To begin we prove a base case, when $k=1$.

\begin{proposition}
\label{prop base case G G^p |omega 1 G|}
Let $G$ be a quasi-powerful $p$-group, then $[G:G^{p}]=|\Omega_{1}(G)|$.
\end{proposition}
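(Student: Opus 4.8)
The plan is to prove the equivalent statement $|G| = |G^{p}| \cdot |\Omega_{1}(G)|$ by induction on $|G|$, the case $|G|=1$ being trivial. For the inductive step I would pick a central subgroup $N \leq Z(G)$ of order $p$ and pass to the quotient $\tilde{G} = G/N$, which is again quasi-powerful and of smaller order, so by induction $|\tilde{G} : \tilde{G}^{p}| = |\Omega_{1}(\tilde{G})|$. Since $\exp \Omega_{1}(\tilde{G}) \leq p$ is already established (Propositions \ref{prop omega_1 has exponent at most p} and \ref{prop exponent of omega 1 for p=3}), the subgroup $\Omega_{1}(\tilde{G})$ is exactly the set of elements of order dividing $p$, and hence its full preimage in $G$ is the subgroup $K = \{g \in G \mid g^{p} \in N\}$, with $|\Omega_{1}(\tilde{G})| = |K|/p$. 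Note that $\Omega_{1}(G) \leq K$ in all cases. The entire argument is then controlled by how $N$ sits relative to $G^{p}$, and I would split into two cases accordingly.

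If $N \cap G^{p} = 1$, the case is essentially free: here $\tilde{G}^{p} = G^{p}N/N \cong G^{p}$, so $|\tilde{G} : \tilde{G}^{p}| = |G : G^{p}|/p$, while on the other side any $g \in K$ satisfies $g^{p} \in N \cap G^{p} = 1$, forcing $K = \Omega_{1}(G)$. Combining these with the inductive equality gives $|G : G^{p}| = p\,|\Omega_{1}(\tilde{G})| = |K| = |\Omega_{1}(G)|$, as wanted. The difficulty is concentrated in the complementary case $N \leq G^{p}$, where $\tilde{G}^{p} = G^{p}/N$ forces $|\tilde{G} : \tilde{G}^{p}| = |G : G^{p}|$, so the inductive hypothesis only yields $|G : G^{p}| = |K|/p$ and I am reduced to proving the single identity $|K : \Omega_{1}(G)| = p$.

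This last identity is the main obstacle, and it amounts to a counting statement for the power map $\psi \colon K \to N$, $g \mapsto g^{p}$: its fibre over $1$ is precisely $\Omega_{1}(G)$, and $\psi$ is surjective because every element of $N \leq G^{p}$ is a genuine $p$th power by Theorem \ref{G^p is set of pth powers}, so it suffices to show every fibre has size $|\Omega_{1}(G)|$. To control the fibre over a generator $n$ of $N$, I would write $n = x_{0}^{p}$ and work inside the powerful subgroup $W = \langle x_{0}, H \rangle$, powerful by Proposition \ref{H is powerfully embedded in G} and Lemma \ref{powerfully embedded group with an element is powerful}, where the count of solutions to $w^{p} = n$ is governed by Wilson's theorem for powerful $p$-groups (Theorem \ref{theorem |G^p^k|=|G: omega k G|}) together with the coset structure of power-map fibres in the powerful setting.

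The genuinely delicate point, and where I expect the Wilson-style analysis of \cite{L2002} to be indispensable, is establishing that the fibres of $\psi$ are uniform in size rather than merely that one fibre embeds into a powerful subgroup: because $(g\omega)^{p} \neq g^{p}$ in general for $\omega \in \Omega_{1}(G)$, the naive coset bijection that settles the abelian and powerful cases breaks down here, and one must argue that no solution of $g^{p} = n$ in $G$ is lost on restricting attention to $W$. Once the uniform-fibre property is secured, one obtains $|K| = |N| \cdot |\Omega_{1}(G)| = p\,|\Omega_{1}(G)|$, which closes the induction and completes the base case.
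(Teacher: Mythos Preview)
Your overall framework---induction on $|G|$, quotienting by a central order-$p$ subgroup $N$, and splitting on whether $N \leq G^{p}$---is sound, and your first case ($N \cap G^{p} = 1$) is handled correctly. The gap is exactly where you locate it: in the case $N \leq G^{p}$ you reduce to showing that all fibres of $\psi \colon K \to N$, $g \mapsto g^{p}$, have size $|\Omega_{1}(G)|$, but you do not actually establish this. Passing to the powerful subgroup $W = \langle x_{0}, H \rangle$ controls solutions of $w^{p}=n$ inside $W$, but you give no argument that every $g \in G$ with $g^{p}=n$ already lies in $W$, and in general there is no reason it should. Invoking a ``Wilson-style analysis'' here is a hope, not an argument.

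The paper sidesteps this difficulty entirely by choosing $N$ strategically rather than arbitrarily. It splits instead on $\exp Z(G)$. If $\exp Z(G)=p$, then either $Z(G) \leq G^{p}$, in which case $[G,G] \leq G^{p}Z(G) = G^{p}$ and $G$ is already powerful, or one can take $N \nleq G^{p}$, which is your easy case. If $\exp Z(G) > p$, one picks $z \in Z(G)$ of order $p^{2}$ and sets $N = \langle z^{p} \rangle$; the point is that now the $p$th root $x_{0}=z$ is \emph{central}. Hence for any $g$ with $g^{p} \in N$, say $g^{p}=z^{\lambda p}$, centrality gives $(gz^{-\lambda})^{p} = g^{p}z^{-\lambda p} = 1$, so $g \in \Omega_{1}(G)\langle z \rangle$. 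This yields $K = \Omega_{1}(G)\langle z \rangle$ and, since $z \notin \Omega_{1}(G)$ but $z^{p} \in \Omega_{1}(G)$, the identity $|K| = p\,|\Omega_{1}(G)|$ drops out immediately. The idea you are missing is not a sharper fibre-counting lemma but the observation that one can always arrange for the $p$th root of a generator of $N$ to be central, which makes the ``naive coset bijection'' work after all.
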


\begin{proof}
Suppose the result holds for all quasi-powerful $p$-groups of smaller order. We consider two cases depending on the exponent of $Z(G)$. For the first case, suppose that $Z(G)$ has exponent $p$. Here we can assume there must be an element $x \in Z(G)$ of order $p,$ with $x \notin G^{p},$ otherwise $G$ would be powerful and we are done. 
Let $N = \langle x \rangle $ and $\bar{G}=G/N$. Suppose $|G|=p^{n}$. Notice first that $|G^{p}|=|\bar{G}^{p}|,$ since $N \nleq G^{p}.$ We need to consider $ \Omega_{1}(\bar{G}).$ We have $\Omega_{1}(G)/N \leq \Omega_{1}(\bar{G})$. If $x \in \Omega_{1}(\bar{G})$ but $x \notin \Omega_{1}(G)$ it would mean that $x^{p} \in N \setminus \{1\}$ but then $N \leq G^{p}$, a contradiction. Thus we must have $\Omega_{1}(G)/N = \Omega_{1}(\bar{G})$. Then $|\bar{G}:\bar{G}^{p}|=|\Omega_{1}(\bar{G})|$, that is 
$$\frac{|G|}{|N||G^{p}|}=\frac{|\Omega_{1}(G)|}{|N|} $$ and the result follows. 

Now for the second case, suppose that the exponent of $Z(G)$ is greater than $p$. In this case we can find some element $z$ of order $p^{2}$ such that $z$ is central in $G$. Let $N=\langle z^{p}\rangle$.  Notice that $$|G:G^{p}|=|\bar{G}:\bar{G}^{p}|.$$ Thus if we can show that $|\Omega_{1}(G)|=|\Omega_{1}(\bar{G})|$ then we are done. Observe that $z^{p}$ is of order $p$ and so $z^{p} \in \Omega_{1}(G)$. Consider the cosets $a_{1}N, \dots, a_{t}N$ of $N$ in $\Omega_{1}(G)$, where $t=\frac{|\Omega_{1}(G)|}{|N|}$. Then we can assume that $\bar{a_{1}}, \dots, \bar{a_{t}} \in \Omega_{1}(\bar{G})$, and so $|\Omega_{1}(\bar{G})| \geq \frac{|\Omega_{1}(G)|}{|p|}$. Next observe that $z$ is of order $p^{2}$ in $G$ and so $z \notin \Omega_{1}(G)$ but $\bar{z}$, the image of $z$ in $G/N$, has order $p$ and so $\bar{z} \in  \Omega_{1}(\bar{G})$. Since  $\bar{z}$ is central, it is easy to see that $\langle a_{1}, \dots, a_{t}, \bar{z} \rangle $ has order $p \cdot \frac{|\Omega_{1}(G)|}{p}=|\Omega_{1}(G)|$. We now show that everything in $\Omega_{1}(\bar{G})$ is accounted for. Suppose that $\bar{x} \in \Omega_{1}(\bar{G})$ but $\bar{x} \notin \frac{\Omega_{1}(G)\langle z \rangle }{N}$. Then $x^{p} = z^{\lambda p}$ for some $0< \lambda < p$, but then $(xz^{-\lambda})^{p}=x^{p}z^{-\lambda p}=1$ and so $xz^{-\lambda} \in \Omega_{1}(G)$ but then $\bar{x} \in \frac{\Omega_{1}(G)\langle z \rangle }{N}$. Hence we can conclude that $|\Omega_{1}(G)|=|\Omega_{1}(\bar{G})|$ and the result follows.
\end{proof}

We now prove the more general result by induction. We remark that our proof below follows Wilson's proof of Theorem 3.1 in \cite{L2002} very closely. Moreover, we will call upon the following result of Wilson taken from \cite{L2002}. We will let $\mathcal{O}_{p}$ be the class of all $p$-groups for which $\Omega_{k}(G)$ is the set of elements of order dividing $p^{k}$ for all $k$.
\begin{lemma}[\cite{L2002}, Lemma 2.1]
\label{lemma If G is in O_k then we have a property about quotients by omega}
Let $G$ be in $\mathcal{O}_{p}$. Then for all $m$ and $k$ 
$$ \Omega_{k}(G/\Omega_{m}(G)) = \Omega_{m+k}(G)/\Omega_{m}(G).$$
\end{lemma}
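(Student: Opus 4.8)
The plan is to prove the stated equality by a direct double inclusion, exploiting the defining property of the class $\mathcal{O}_{p}$ to translate freely between membership in an Omega subgroup and a bound on the order of an element. Write $N = \Omega_{m}(G)$ and $\bar{G} = G/N$; recall that $N$ is characteristic, hence normal, so the quotient is legitimate. Since every element of order dividing $p^{m}$ also has order dividing $p^{m+k}$, we have $N = \Omega_{m}(G) \leq \Omega_{m+k}(G)$, so the right-hand side $\Omega_{m+k}(G)/N$ is a genuine subgroup of $\bar{G}$ and the statement is well-posed.

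For the inclusion $\Omega_{m+k}(G)/N \subseteq \Omega_{k}(\bar{G})$, I would take an arbitrary coset $\bar{g} = gN$ with $g \in \Omega_{m+k}(G)$. Because $G \in \mathcal{O}_{p}$, the subgroup $\Omega_{m+k}(G)$ consists precisely of the elements of order at most $p^{m+k}$, so $o(g) \leq p^{m+k}$ and hence $o(g^{p^{k}}) \leq p^{m}$. It follows that $g^{p^{k}} \in \Omega_{m}(G) = N$ (this step needs no hypothesis, as $\Omega_{m}(G)$ contains all elements of order dividing $p^{m}$ by definition), whence $\bar{g}^{p^{k}} = \bar{1}$ and $\bar{g} \in \Omega_{k}(\bar{G})$.

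For the reverse inclusion $\Omega_{k}(\bar{G}) \subseteq \Omega_{m+k}(G)/N$, I would argue on generators: $\Omega_{k}(\bar{G})$ is generated by the images $\bar{g}$ of elements $g \in G$ with $\bar{g}^{p^{k}} = \bar{1}$, that is, with $g^{p^{k}} \in N = \Omega_{m}(G)$. Here the $\mathcal{O}_{p}$ hypothesis at level $m$ yields $o(g^{p^{k}}) \leq p^{m}$, so $g^{p^{m+k}} = 1$ and therefore $g \in \Omega_{m+k}(G)$. Thus each such generator $\bar{g}$ lies in the subgroup $\Omega_{m+k}(G)/N$; since this is a subgroup containing every generator of $\Omega_{k}(\bar{G})$, it contains $\Omega_{k}(\bar{G})$, which completes the second inclusion and the proof.

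The argument is essentially bookkeeping, and the only point requiring care — the step I would single out as the crux — is ensuring the membership-to-order translation is applied at the correct level on each side. The hypothesis $G \in \mathcal{O}_{p}$ is genuinely invoked only once per inclusion, at level $m+k$ for the forward direction and at level $m$ for the reverse, while the opposite implications hold for every $p$-group directly from the definition of $\Omega_{i}$ as a subgroup generated by elements of bounded order.
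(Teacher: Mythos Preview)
Your proof is correct. The paper does not actually prove this lemma itself; it simply quotes it from Wilson \cite{L2002} and uses it as a black box, so there is no in-paper proof to compare against. Your double-inclusion argument is the standard one and is essentially how the result is established in the literature: the $\mathcal{O}_{p}$ hypothesis is used exactly where you indicate, once at level $m+k$ and once at level $m$, with the opposite implications being automatic from the definition of $\Omega_{i}$.
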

We are now in a position to establish Theorem \ref{theorem intro regular pow struc and 3 points}(\ref{theorem letter regular power structure last condition on order of G}).

\begin{proof}[Proof of Theorem \ref{theorem intro regular pow struc and 3 points}(\ref{theorem letter regular power structure last condition on order of G})]
We use induction on $k$. We have established the base case in Proposition \ref{prop base case G G^p |omega 1 G|}. Assume now that the result holds for $k$. We wish to find the order of $G^{p^{k+1}}$. By Theorem \ref{theorem intro regular pow struc and 3 points} (\ref{theorem letter group generated by pith powers coincides with set of powers}) we have that $G^{p^{k+1}}=(G^{p})^{p^{k}}.$ As $G^{p}$ is powerful,  we can apply Theorem \ref{theorem |G^p^k|=|G: omega k G|} and conclude that 
\begin{equation}
\label{eqn G^p^k+1 = G^p:...} 
    |G^{p^{k+1}}|=|G^{p}:\Omega_{k}(G^{p})|
\end{equation}

By Theorem \ref{theorem intro regular pow struc and 3 points}(\ref{thm intro: product of two elements of order at most p^i has order at most p^i}). we know that the exponent of $\Omega_{k}(G)$ is at most $p^{k}$ and so we have $\Omega_{k}(G^{p})=\Omega_{k}(G) \cap G^{p}.$ Then $$ G^{p}/\Omega_{k}(G^{p}) \cong G^{p}\Omega_{k}(G)/\Omega_{k}(G) = (G/\Omega_{k}(G))^{p}.$$
Thus 
\begin{equation}
\label{eqn G^p: omega...}
|G^{p} : \Omega_{k}(G^{p})| = |(G/\Omega_{k}(G))^{p}|.
\end{equation}

Now since quotients of quasi-powerful $p$-groups are quasi-powerful $p$-groups, we have that $G/\Omega_{k}(G)$ is a quasi-powerful $p$-group. We can apply the base case, Proposition \ref{prop base case G G^p |omega 1 G|}, to find that $$|(G/\Omega_{k}(G))^{p}|=|G/\Omega_{k}(G):\Omega_{1}(G/\Omega_{k}(G))|.$$ By Lemma \ref{lemma If G is in O_k then we have a property about quotients by omega} we have $\Omega_{1}(G/\Omega_{k}(G))=\Omega_{k+1}(G)/\Omega_{k}(G)$. Thus we conclude that $$ |(G/\Omega_{k}(G))^{p}|=|G/\Omega_{k}(G):\Omega_{k+1}(G)/\Omega_{k}(G)|=|G:\Omega_{k+1}(G)| $$ and hence $|G^{p^{k+1}}|=|G:\Omega_{k+1}(G)|$ by (\ref{eqn G^p^k+1 = G^p:...}) and (\ref{eqn G^p: omega...}).
\end{proof}
Thus we have proved Theorem \ref{theorem intro regular pow struc and 3 points} (\ref{theorem letter regular power structure last condition on order of G}). Hence we have now proved all three parts of Theorem \ref{theorem intro regular pow struc and 3 points} and thus have shown that quasi-powerful $p$-groups have a regular power structure for odd primes $p$. We now know that the families of $p$-groups (for odd primes $p$) with a regular power structure include regular $p$-groups, powerful $p$-groups, potent $p$-groups and quasi-powerful $p$-groups. 

\section{An Application}
\numberwithin{theorem}{section}

\label{subsection an application}
We now seek to further generalise the results from \cite{Fernandez-Alcober2007} on the orders of commutators in powerful $p$-groups.  We will see how  Theorem \ref{ theorem letter more detailed commutator order bound}  follows as a consequence of Theorem \ref{theorem intro regular pow struc and 3 points} (\ref{thm intro: product of two elements of order at most p^i has order at most p^i}).

\begin{proof}[Proof of Theorem \ref{ theorem letter more detailed commutator order bound}]
The proof is by induction on the order of $G$. The claim is clearly true for the trivial group. Now suppose that $|G|\geq p$ and that the claim holds for all quasi-powerful groups of smaller order. Let $x,y \in G$ with $o(x) \leq p^{i+1}$ and $o(y) \leq p^{i}$ Consider the images of $x$ and $y$ in $\frac{G}{\Omega_{1}(G)}$, which we shall denote  $\bar{x}$ and $\bar{y}$. Then $o(\bar{x}) \leq p^{i}$ and $o(\bar{y}) \leq p^{i-1}$. Then by the induction hypothesis we have that $o([\bar{x}^{p^{j}}, \bar{y}^{p^{k}}]) \leq p^{i-1-j-k}$. Then lifting up we see that $[x^{p^{j}}, y^{p^{k}}]^{p^{i-j-k-1}} \in \Omega_{1}(G)$. We know that $\Omega_{1}(G)$ has exponent $p$ by Theorem \ref{theorem intro regular pow struc and 3 points}(\ref{thm intro: product of two elements of order at most p^i has order at most p^i}). Thus $[x^{p^{j}},y^{p^{k}}]$ has order at most $p^{i-j-k}.$

\end{proof}

Thus we have established Theorem \ref{ theorem letter more detailed commutator order bound}. 

\begin{remark}
In fact, the argument in the proof of Theorem \ref{ theorem letter more detailed commutator order bound} will work for any family of $p$-groups where the property that $\exp \Omega_{1}(G) \leq p $ is maintained when taking quotients. In particular the argument works unchanged for potent $p$-groups for odd primes $p$.  Hence we can obtain Theorem \ref{theorem intro detailed comm bound for potent groups}.
\end{remark}
\section{Minimal generation of subgroups}
\numberwithin{theorem}{section}

\label{section generators for subgroups}
In this section we consider an $r$-generator quasi-powerful $p$-group $G$ and show that the number of generators of any subgroup of $G$ can be bounded by a quadratic function in terms of $r$ only. We begin by recalling one of the most abelian-like properties of powerful $p$-groups; that for a powerful $p$-group the minimal number of generators of a subgroup cannot exceed that of the group (Theorem \ref{thm pow p group subgroup rank bounded}).  It is natural to ask if this can be extended to all quasi-powerful $p$-groups. 

We now recall a well known family of groups of nilpotency class $2$, as an example to demonstrate that for  $r$-generator quasi-powerful $p$-groups the minimal number of generators of a subgroup can grow quadratically in $r$. 

\begin{example}
\label{Higmans example}
In \cite[Theorem 2.1]{higmanenumeratingpgroups1960}, G. Higman shows that for any given positive integer $r$ and prime $p$, there is an $r$-generator $p$-group $H$ of nilpotency class $2$ (and thus quasi-powerful) such that $\Phi(H)=Z(H)=H^{\prime}$. Furthermore $\Phi(H)$ is elementary abelian of order $p^{\frac{1}{2}r(r+1)}$. In particular $d(\Phi(H))=\frac{1}{2}r(r+1)$.   These groups are the $p$-covering groups of the elementary abelian $p$-groups.
\end{example}

We will establish an upper bound on the minimal number of generators for subgroups of quasi-powerful $p$-groups. We are able to reduce to the case where the group is of nilpotency class at most $2$. We will need to bound the number of generators of the derived subgroup of $G$. To do this we will use the following Theorem due to Witt (see \cite[Chapter 11, Theorem 11.2.2]{hall1999theory}).
\begin{theorem}[Witt]
\label{theorem of witt}
The number of commutators of weight $n$ in $r$ generators is given by $$ \frac{1}{n} \sum_{d \mid n} \mu(d)r^{n/d},$$
where $\mu(m)$ is the M\"{o}bius function.
\end{theorem}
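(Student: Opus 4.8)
The plan is to reduce the group-theoretic count to a dimension computation in the free Lie ring and then finish by a generating-function argument. Write $L$ for the free Lie ring on $r$ generators, graded by weight, and let $d_n = \dim L_n$ be the rank of its weight-$n$ homogeneous component. The essential structural input from the theory of basic commutators and the collection process is that the basic commutators of weight $n$ form a free basis of $L_n$; granting this, the quantity to be computed is exactly $d_n$, and the whole problem becomes the determination of $d_n$.

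First I would invoke the Poincar\'e--Birkhoff--Witt theorem. The universal enveloping algebra $U(L)$ is the free associative algebra $k\langle x_{1},\dots,x_{r}\rangle$, whose weight-$n$ component is spanned by the $r^{n}$ words of length $n$, so $\dim U(L)_{n}=r^{n}$. On the other hand, PBW supplies a basis of $U(L)$ consisting of ordered monomials in a homogeneous basis of $L$; since each of the $d_{k}$ basis elements of weight $k$ may occur with any multiplicity, counting these monomials by total weight gives the formal power series identity
$$\frac{1}{1-rt}=\prod_{k\ge 1}(1-t^{k})^{-d_{k}}.$$

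Next I would extract $d_{n}$ from this identity by taking logarithms and expanding both sides, which yields
$$\sum_{j\ge 1}\frac{r^{j}}{j}\,t^{j}=\sum_{k\ge 1}d_{k}\sum_{m\ge 1}\frac{t^{km}}{m}.$$
Comparing the coefficient of $t^{n}$ on each side gives $r^{n}=\sum_{k\mid n}k\,d_{k}$, and M\"obius inversion then produces $n\,d_{n}=\sum_{d\mid n}\mu(d)\,r^{n/d}$, which is precisely the claimed formula.

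The main obstacle is not the computation but securing its structural inputs: that basic commutators of weight $n$ form a basis of $L_{n}$, and the PBW identification of $U(L)$ with the free associative algebra. Both are standard but substantial, so for a genuinely self-contained argument I would instead take the combinatorial route, which sidesteps PBW entirely. One counts Lyndon words of length $n$ over an $r$-letter alphabet: every word is uniquely a power of a primitive word, so $r^{n}=\sum_{d\mid n}P(d)$, where $P(d)$ is the number of primitive words of length $d$; M\"obius inversion gives $P(n)=\sum_{d\mid n}\mu(d)\,r^{n/d}$, and since a primitive word has exactly $n$ distinct cyclic rotations and each rotation class contains a unique Lyndon representative, the number of Lyndon words of length $n$ is $P(n)/n$. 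The standard bijection between Lyndon words and basic commutators of the same weight then completes the argument, giving the same formula.
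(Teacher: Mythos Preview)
Your argument is correct; both the PBW/generating-function route and the Lyndon-word count are standard, valid derivations of Witt's formula, and you have carried out the key steps accurately (in particular the identity $r^{n}=\sum_{k\mid n}k\,d_{k}$ and its inversion).

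There is, however, nothing to compare against: the paper does not prove this statement at all. It is quoted as a classical result of Witt, with a reference to Hall's \emph{The Theory of Groups}, and is used only as a black box to bound $d([G,G])$ by $\tfrac{1}{2}r(r-1)$ in the proof of Theorem~6. So your proposal goes well beyond what the paper supplies. If anything, note that for the paper's purposes only the case $n=2$ is needed, where the formula reduces to $\tfrac{1}{2}(r^{2}-r)$ and can be seen directly: in a nilpotent group of class~$2$ the commutator map is bilinear and alternating on $G/[G,G]$, so $[G,G]$ is generated by the $\binom{r}{2}$ commutators $[x_{i},x_{j}]$ with $i<j$.
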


We can now prove the main result of this section.

\begin{proof}[Proof of Theorem \ref{theorem intro bound on rank subgroups}]
We begin by identifying the smallest subgroup $K$ of $Z(G)$ such that $G/K$ is powerful. We know that $G^{\prime}\leq G^{p}Z(G)$. Let $K$ be the smallest subgroup of $Z(G)$ such that $G^{\prime} \leq G^{p}K$.  By the minimality of $K$, we have that $G^{p} \cap K \leq \Phi(K)$. That is, any element of $G^{p} \cap K$ would be redundant as a generator of $K$.  

Consider the quotient group $\bar{G}=G/K$. The group $\bar{G}$ is a powerful $p$-group, and so $d(\bar{H}) \leq d(\bar{G})=r$. We also know that $K$ is abelian and so the number of generators of any subgroup of $K$ cannot exceed $d(K)$. Thus we obtain 
\begin{align}
    \label{eqn d(h) leq r + d(Z(G) cap phi(G)}
    d(H) \leq r + d(K).
\end{align}

We now seek to bound the minimal number of generators of $K$. We observed previously that any element of $G^{p} \cap K$ would be redundant as a generator of $K$, and thus we may assume that $G^{p}=1$. We have that $[G,G]\leq G^{p}Z(G) = Z(G)$ and so we can assume that $G$ is of nilpotency class at most $2$. 

Consider the series 

$$ K \geq K \cap [G,G] \geq 1.$$

We see that
$$\frac{K}{K \cap [G,G]}= \frac{K \cap G}{(K \cap G) \cap [G,G]} \cong \frac{(K \cap G)[G,G]}{[G,G]} \leq \frac{G}{[G,G]}.$$
It is clear that $G/[G,G]$ is an $r$-generator abelian group, and so $d\left(\frac{K}{K\cap[G,G]}\right) \leq r$. Similarly $K \cap [G,G] \leq [G,G]$. The group $[G,G]$ is central, and hence abelian. Furthermore it is generated by the commutators of weight $2$ in the generators of $G$; by Theorem \ref{theorem of witt} there are $\frac{1}{2}r(r-1)$ of these. Hence $d(K \cap [G,G]) \leq \frac{1}{2}r(r-1)$. It follows that $$d(K) \leq r+ \frac{1}{2}r(r-1) = \frac{1}{2}r(r+1).$$  
Then using (\ref{eqn d(h) leq r + d(Z(G) cap phi(G)}) we see that:
$$ d(H) \leq r +\frac{1}{2}r(r+1) = \frac{1}{2}r(r+3). $$
\end{proof}
The infinite family of groups given in Example \ref{Higmans example} demonstrate that this bound is close to best possible.

We recall Theorem \ref{theorem pow p groups prod of d cyclic groups}, that for a powerful $p$-group $G$ with $d(G)=d$, we can write $G$ as a product of $d$ cyclic subgroups. As an immediate consequence of Theorem \ref{theorem intro bound on rank subgroups} we can obtain the following generalisation. 
\begin{corollary}
Let $G$ be a quasi-powerful $p$-group with $d(G)=d$. Then $G$ is a product of at most $d + \frac{1}{2}d(d+3)$ cyclic groups. 
\end{corollary}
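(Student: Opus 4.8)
The plan is to combine the decomposition of powerful $p$-groups into cyclic factors (Theorem \ref{theorem pow p groups prod of d cyclic groups}) with the generator bound for subgroups (Theorem \ref{theorem intro bound on rank subgroups}), exploiting the defining feature of a quasi-powerful group that $G/Z(G)$ is powerful. The overall strategy is to split $G$ into a part coming from the powerful quotient $G/Z(G)$ and a central part, and to count cyclic factors in each.

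First I would pass to the quotient $\bar G = G/Z(G)$, which is powerful by hypothesis and satisfies $d(\bar G) \leq d(G) = d$, since the minimal number of generators cannot increase under taking quotients. By Theorem \ref{theorem pow p groups prod of d cyclic groups}, $\bar G$ can be written as a product of at most $d$ cyclic subgroups, say $\bar G = \langle \bar g_1 \rangle \cdots \langle \bar g_d \rangle$. I would then lift this decomposition back to $G$: choosing preimages $g_i \in G$ of the $\bar g_i$, any $g \in G$ has image $\bar g = \bar a_1 \cdots \bar a_d$ with $\bar a_i \in \langle \bar g_i \rangle$, whence $g \in \langle g_1 \rangle \cdots \langle g_d \rangle Z(G)$. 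Thus $G = \langle g_1 \rangle \cdots \langle g_d \rangle Z(G)$ is a product of the $d$ cyclic groups $\langle g_i \rangle$ together with the central subgroup $Z(G)$.

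Finally I would decompose $Z(G)$. Being abelian, $Z(G)$ is a direct product of exactly $d(Z(G))$ cyclic groups. Since $Z(G) \leq G$, Theorem \ref{theorem intro bound on rank subgroups} gives $d(Z(G)) \leq \tfrac{1}{2}d(d+3)$, so $Z(G)$ is a product of at most $\tfrac{1}{2}d(d+3)$ cyclic groups. Substituting this into the decomposition above expresses $G$ as a product of at most $d + \tfrac{1}{2}d(d+3)$ cyclic groups, as required.

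The only step requiring any care is the lifting argument, and it is entirely routine: a decomposition of a quotient as a product of subgroups pulls back to a decomposition of the ambient group as a product of the chosen preimage subgroups together with the kernel. No genuine obstacle arises here, which is precisely why the result is stated as an immediate corollary of Theorem \ref{theorem intro bound on rank subgroups}.
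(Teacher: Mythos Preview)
Your proposal is correct and follows essentially the same approach as the paper: write $G$ as a product of lifts of cyclic factors of the powerful quotient $G/Z(G)$ together with $Z(G)$, and then bound $d(Z(G))$ using Theorem~\ref{theorem intro bound on rank subgroups}. The paper's proof is a terse one-sentence version of exactly this argument (its citation to Theorem~\ref{theorem intro regular pow struc and 3 points}(\ref{thm intro: product of two elements of order at most p^i has order at most p^i}) for the rank bound on $Z(G)$ appears to be a misprint for Theorem~\ref{theorem intro bound on rank subgroups}).
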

\begin{proof}
The result follows from the facts that  $\bar{G}=G/Z(G)$ is a powerful $p$-group with $d(\bar{G}) \leq r$, and that $Z(G)$ is an abelian group of rank at most $\frac{1}{2}d(d+3)$ by Theorem \ref{theorem intro regular pow struc and 3 points}(\ref{thm intro: product of two elements of order at most p^i has order at most p^i}).
\end{proof}

\section{A Remark on the Even Prime}
\numberwithin{theorem}{section}

\label{section on the even prime}
It often happens that the prime $p=2$ behaves differently to the odd primes. For example for powerful $2$-groups L. Wilson showed in his thesis \cite{wilsonsphd} that $\exp \Omega_{i}(G) \leq 2^{i+1}$, which is best possible (an alternate proof of this bound is given in \cite{Fernandez-Alcober2007}, Corollary 2). If we took the same definition for quasi-powerful $2$-groups as for odd primes, we see that condition (\ref{eqn omega conditon}) need not hold for quasi-powerful $2$-groups. The following example gives a $2$-group $G$ such that $G/Z(G)$ is powerful. This example shows how spectacularly the regular power structure properties (\ref{eqn pth power}), (\ref{eqn omega conditon}) and (\ref{eqn index condition}) can fail for quasi-powerful $2$-groups if we take the definition to be the same as in the case of odd primes.

\begin{example}
Let $G$ be the $2$-group given by the presentation
\begin{equation*}
\begin{split}
\langle a,b,c,d,e \mid& a^2, b^8, c^2, d^4, e^2, [a,b], [a,c], [a,d], [a,e]=b^4, \\
& [b,c], [b,d]=b^2c, [b,e]=b^4,  [c,d]=b^4, [c,e], [d,e] \rangle 
\end{split}
\end{equation*}
The structure of the group can be described as $$ (\mathbb{Z}_{2} \times ((\mathbb{Z}_{8} \times \mathbb{Z}_{2}) \rtimes \mathbb{Z}_{4})) \rtimes \mathbb{Z}_{2}. $$

All three of the conditions: (\ref{eqn pth power}), (\ref{eqn omega conditon}), (\ref{eqn index condition}) fail to hold in this group: 
\begin{enumerate}[(a)]
  
      \item In this group there are $8$ distinct squares, however $|G^{2}|=16$. For instance $c$ is not the square of any element, but $c \in G^{2}$.
    \item We have that $\exp \Omega_{1}(G)=4$. Note that  $b^{2} \in \Omega_{1}(G).$ 
    \item We have  $|G:G^{2}|=16$ but $|\Omega_{1}(G)|=64$. 
\end{enumerate}

These computations can be readily checked. This particular group can be constructed in GAP \cite{GAP4.8.4} as \texttt{SmallGroup(256, 13326)}.

\end{example}

Hence we see that the `nice' results which hold for odd primes do not hold for $2$-groups $G$ such that $G/Z(G)$ is powerful. It would be interesting to see how much of the theory could be salvaged with a modified definition. 

\section{Further Remarks}
\makeatletter
\def\@secnumfont{\bfseries}
\makeatletter
\label{section further remarks}
\numberwithin{theorem}{section}

\subsection{}
\label{end notes further references}
We discuss here a few interesting applications of powerful $p$-groups, beyond what was mentioned in the introduction. 
In the introduction we alluded to the fact that powerful $p$-groups are in some sense close to abelian groups. For an excellent example of a case when the abelian condition can be dropped, and instead the theory of powerful $p$-groups can be deployed, see the paper \cite{Fernandez-Alcober2019}.
For applications of powerful $p$-groups to the study of automorphisms of finite $p$-groups, see the excellent book \cite{khukhro_1998}.
For an application of powerful $p$-groups to the study of Engel groups, see \cite{traustason2002engel}.
Powerful $p$-groups have applications beyond finite $p$-groups, for instance they also have found uses in the study of $p$-adic analytic groups (see \cite{LUBOTZKY1987506padic} and \cite{ddms1999}).

\subsection{}
In this paper we find a new family of groups which has a regular power structure. We now know of several families of groups with regular power structure, and so it is natural to ask, can all groups with a regular power structure be classified into a finite number of families, and how does this classification depend on the prime $p$?

\subsection{}
\label{endnote to show quasi-powerful pgroups need not have Gp pow embedded}
The following example is of a quasi-powerful $3$-group $G$ such that $G^{3}$  is not powerfully embedded in $G$. 
\begin{example}
Let $G$ be the quasi-powerful $3$-group given by the following presentation $$\langle a,b,c \mid a^9, b^9, c^9, [a,c]=b, [b,c]=b^{6}, [b,c]=1 \rangle. $$ This group can be described as a semidirect product of the form $(C_{9} \times C_{9}) \rtimes C_{9}$. Notice in this group that $G^{9}=1$, however $[a^3,c]=b^3$ and so $[G^{3},G] \neq 1$ and so $G^{3}$ is not powerfully embedded in $G$. This group can be constructed in GAP as \texttt{SmallGroup(729,30)}. 
\end{example}

Thus we see that $G^{3}$ must be powerful by Theorem \ref{theorem G^p is powerful} but need not be powerfully embedded.
\subsection{}
\label{further remark on powerfully nilpotent groups}
\label{subsection powerfully nilpotent groups}
We briefly recall some notions from the theory of powerfully nilpotent groups, introduced in \cite{TRAUSTASON201980}. The theory of powerfully nilpotent groups is not fundamental to this paper, we will just observe in Remark \ref{remark H is powerfull nilpotent} that a certain characteristic subgroup is powerfully nilpotent. \par

In \cite{TRAUSTASON201980} the notions of a \textit{strongly powerful} and a \textit{powerfully nilpotent} group are introduced. Loosely speaking, a powerfully nilpotent group is a powerful $p$-group admitting a special kind of central series.\par

The reason why we may want to know that a group is powerfully nilpotent, is because the structure of these groups is very rich. For example associated to every powerfully nilpotent group is a quantity known as the \textit{powerful coclass}, and it turns out that the rank and exponent of a powerfully nilpotent group can both be bounded by this quantity (for details see \cite{TRAUSTASON201980}).

\begin{definition}
We say that a finite $p$-group $G$ is \textit{strongly powerful} if $[G,G]\leq G^{p^{2}}$.
\end{definition}

In \cite[pp. 81--82]{TRAUSTASON201980} we show that if a group is strongly powerful then it follows that it is powerfully nilpotent. 

\subsection{}
\label{further properties of H}
In Section \ref{subsection basic properties of quasi-powerful $p$-groups} we defined the subgroup $H=G^{p}Z(G)$. In fact the subgroup $H$ is strongly powerful and thus is powerfully nilpotent. As $H$ is powerfully embedded in $G$, it follows that $[H,G,G] \leq H^{p^{2}}$. Hence when we show that $H$ is strongly powerful, we may assume that $[H,G,G]=1$.

\begin{proposition}
$H$ is strongly powerful.
\label{prop H is strongly powerful}
\end{proposition}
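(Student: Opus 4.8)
The goal is to show that $H = G^p Z(G)$ is strongly powerful, i.e.\ that $[H,H] \leq H^{p^2}$. By the remark preceding the proposition, since $H$ is powerfully embedded in $G$ (Proposition \ref{H is powerfully embedded in G}), we have $[H,G,G] \leq H^{p^2}$, and so in proving $[H,H] \leq H^{p^2}$ we may pass to the quotient by $H^{p^2}$ and thereby assume $[H,G,G]=1$. This is the key simplification: it makes $[H,G]$ central, so all the commutator identities collapse into linear (additive) relations.

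\textbf{Plan.} The plan is to take two arbitrary elements of $H$ and show their commutator lies in $H^{p^2}$. Using Remark \ref{formofelementsinH}, any element of $H$ can be written as $x^p z$ with $x \in G$ and $z \in Z(G)$, so it suffices to compute $[x^p z, y^p w]$ for $x,y \in G$ and $z,w \in Z(G)$. The central factors drop out immediately, reducing us to $[x^p, y^p]$. First I would expand $[x^p, y^p]$ using standard commutator collection. Working under the assumption $[H,G,G]=1$, the relevant commutators $[x,y] \in G' \leq H$ satisfy $[x,y,g]$ central for all $g$, which linearises the expansion: one expands $[x^p, y^p]$ by peeling off one power at a time, and each correction term is a commutator of $[x,y]$ with a power of $x$ or $y$, all of which are central. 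The upshot should be a clean expression of the form $[x,y]^{p^2}$ multiplied by central commutators raised to binomial-coefficient exponents.

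\textbf{Key steps in order.} The steps I would carry out are: (1) reduce to $[H,G,G]=1$ via Lemma \ref{K is pe in G if  [K,G] lew K^p[K,G,G]} and the powerful embedding of $H$; (2) reduce $[H,H]$ to computing $[x^p,y^p]$ by stripping central factors; (3) expand $[x^p, y^p]$ using the fact that $[x,y,-]$ is central, obtaining a main term $[x,y]^{p^2}$ together with correction terms of the shape $[x,y,x]^{\alpha}[x,y,y]^{\beta}$ where the exponents $\alpha,\beta$ are sums of binomial coefficients divisible by $p^2$ (this is where the oddness of $p$, and in fact the stronger divisibility needed for $p^2$, must be checked carefully); (4) argue that the main term $[x,y]^{p^2}$ lies in $H^{p^2}$ since $[x,y] \in H$ and $H$ is powerful, so $H^{p^2}$ equals the set of $p^2$th powers of elements of $H$ by Theorem \ref{theorem properties of pth powers in powerful groups}(i); and (5) absorb the central correction terms into $H^{p^2}$, using that those weight-three commutators lie in $[H,G,G]$, which we have assumed trivial after the reduction.

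\textbf{Main obstacle.} The hard part will be the exponent bookkeeping in step (3): verifying that the accumulated exponents on the central correction terms $[x,y,x]$ and $[x,y,y]$ are divisible by $p^2$, not merely by $p$. Getting $[H,H] \leq H^p$ (powerfulness) only needs divisibility by $p$, which is why the triangular-number factor $\tfrac12 p(p-1)$ appearing in the proof of Proposition \ref{H is powerfully embedded in G} sufficed there; pushing to $H^{p^2}$ requires tracking a second power of $p$. I expect this to come either from a sum of the form $\sum_{i} \binom{p}{2}$-type coefficients across the $p$ layers of the expansion, or more directly from the observation that after the reduction $[H,G,G]=1$ the correction commutators themselves already lie in $[H,G] = [H,G]$ with $[H,G,G]=1$ forcing the relevant terms into $H^{p^2}$; the precise mechanism is exactly what the detailed calculation must pin down. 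Once that divisibility is established, the remaining steps are routine.
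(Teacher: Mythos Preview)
Your reduction and the plan to compute $[x^p,y^p]$ are sound, but step~(5) contains a genuine error: the weight-three commutators $[x,y,x]$ and $[x,y,y]$ lie in $[H,G]$ (since $[x,y]\in G'\leq H$), \emph{not} in $[H,G,G]$. Under the reduction they are central, but not trivial, so you cannot simply discard them. You therefore must go through the divisibility bookkeeping you flagged in step~(3). That does work: in the class-$3$ situation forced by $[H,G,G]=1$ one gets
\[
[x^p,y^p]=[x,y]^{p^2}\,[x,y,x]^{\,p\binom{p}{2}}\,[x,y,y]^{\,p\binom{p}{2}},
\]
and $p\binom{p}{2}=\tfrac{p^2(p-1)}{2}$ is divisible by $p^2$ for odd $p$; since $[x,y,x],[x,y,y]\in[H,G]\leq H^p$ and $H$ is powerful, each factor lands in $H^{p^2}$.

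The paper's proof avoids this computation entirely by exploiting the asymmetry you are throwing away: it keeps $x^p$ intact rather than expanding both powers. Since $x^p\in H$, the element $[x^p,y]$ lies in $[H,G]$, and $[H,G,G]=1$ makes it central; hence $[x^p,y^p]=[x^p,y]^p$ with no correction terms at all. Then $[x^p,y]\in[H,G]\leq H^p$ immediately gives $[x^p,y]^p\in(H^p)^p=H^{p^2}$. This is exactly the trick that let Proposition~\ref{H is powerfully embedded in G} avoid higher binomial divisibility, applied once more: by treating one of the two $p$th powers as an element of $H$ from the outset, the ``main obstacle'' you identified never arises.
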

\begin{proof}
Let $h_{1}, h_{2} \in H$. By Remark \ref{formofelementsinH} we can write $h_{1}=g_{1}^{p}z_{1}$ and $h_{2}=g_{2}^{p}z_{2}$.
Then 
\begin{align*} 
[h_{1},h_{2}] &=  [g_{1}^{p} z_{1}, g_{2}^{p} z_{2}] \\ 
 &=  [g_{1}^{p}, g_{2}^{p}] \\
 &=  [g_{1}^{p}, \underbrace{g_{2} \dots g_{2}}_{p}]
\end{align*}
Then expanding and using that $g_{1}^{p} \in H$ and  $[H,G,G]=1$ yields
\begin{align*}
    [h_{1}, h_{2}] &= [g_{1}^{p},g_{2}]^{p}. 
\end{align*}
Now since $H$ is powerfully embedded in $G$ we have that $[g_{1}^{p},g_{2}]\in H^{p}$. Thus $[g_{1}^{p},g_{2}]^{p} \in (H^{p})^{p}$ and the result follows.\end{proof}

\label{remark H is powerfull nilpotent}

\bibliographystyle{amsplain} 
\bibliography{bibliography}

\providecommand{\bysame}{\leavevmode\hbox to3em{\hrulefill}\thinspace}
\providecommand{\MR}{\relax\ifhmode\unskip\space\fi MR }
\providecommand{\MRhref}[2]{%
  \href{http://www.ams.org/mathscinet-getitem?mr=#1}{#2}
}
\providecommand{\href}[2]{#2}
\begin{thebibliography}{10}

\bibitem{traustason2002engel}
A.~Abdollahi and G.~Traustason, \emph{On locally finite {p}-groups satisfying
  an {E}ngel condition}, Proc. Amer. Math. Soc. \textbf{130} (2002),
  2827--2836.

\bibitem{Fernandez-Alcober2019}
G.~A. Fern{\'a}ndez-Alcober~I. de~las Heras, \emph{Commutators in finite
  {p}-groups with 2-generator derived subgroup}, Israel J. Math. \textbf{232}
  (2019), 109--124.

\bibitem{ddms1999}
J.~D. Dixon, M.~P.~F. Du~Sautoy, A.~Mann, and D.~Segal, \emph{Analytic pro-p
  groups}, 2 ed., Cambridge Studies in Advanced Mathematics, Cambridge
  University Press, 1999.

\bibitem{Fernandez-Alcober2007}
G.~A. Fern{\'a}ndez-Alcober, \emph{Omega subgroups of powerful {p}-groups},
  Israel J. Math. \textbf{162} (2007), 75--79.

\bibitem{GAP4.8.4}
\emph{{GAP} {\textendash} {G}roups, {A}lgorithms, and {P}rogramming, {V}ersion
  4.8.4}, \href {http://www.gap-system.org}
  {\texttt{http://www.gap-system.org}}, Jun 2016.

\bibitem{GONZALEZSANCHEZ2004193}
J.~Gonz{\'a}lez-S{\'a}nchez and A.~Jaikin-Zapirain, \emph{On the structure of
  normal subgroups of potent {p}-groups}, J. Algebra \textbf{276} (2004),
  193--209.

\bibitem{hall1999theory}
M.~Hall, \emph{The theory of groups}, AMS Chelsea Pub., 1976.

\bibitem{phallcontribtothetheoryofgroupsofprimepowerorder}
P.~Hall, \emph{{A contribution to the theory of groups of prime-power order}},
  Proc. London Math. Soc. \textbf{36} (1933), 29--95.

\bibitem{higmanenumeratingpgroups1960}
G.~Higman, \emph{Enumerating {p}-groups. {I}: {I}nequalities}, Proc. London
  Math. Soc. \textbf{10} (1960), 24--30.

\bibitem{HETHELYI20031}
L.~Héthelyi and L.~Lévai, \emph{On elements of order {p} in powerful
  {p}-groups}, J. Algebra \textbf{270} (2003), 1--6.

\bibitem{khukhro_1998}
E.~I. Khukhro, \emph{{p}-{A}utomorphisms of finite p-groups}, Cambridge
  University Press, 1998.

\bibitem{LUBOTZKY1987484}
A.~Lubotzky and A.~Mann, \emph{Powerful {p}-groups. {I.} {F}inite groups}, J.
  Algebra \textbf{105} (1987), 484--505.

\bibitem{LUBOTZKY1987506padic}
\bysame, \emph{Powerful p-groups. {II.} {p}-adic analytic groups}, J. Algebra
  \textbf{105} (1987), 506--515.

\bibitem{Mann2003}
A.~Mann and F.~Posnick-Fradkin, \emph{Subgroups of powerful groups}, Israel J.
  Math. \textbf{138} (2003), 19--28.

\bibitem{Mazur2007}
M.~Mazur, \emph{On powers in powerful {p}-groups}, J. Group Theory \textbf{5}
  (2007), 431--433.

\bibitem{mckay2000finite}
S.~McKay, \emph{Finite {p}-groups}, QMW maths notes, School of Mathematical
  Sciences, Queen Mary, University of London, 2000.

\bibitem{SHALEV1993271}
A.~Shalev, \emph{On almost fixed point free automorphisms}, J. Algebra
  \textbf{157} (1993), 271--282.

\bibitem{Shalev1994}
\bysame, \emph{The structure of finite {p}-groups: {E}ffective proof of the
  coclass conjectures}, Invent. Math. \textbf{115} (1994), 315--345.

\bibitem{Shalev1995}
\bysame, \emph{Finite {p}-groups}, Finite and Locally Finite Groups (Istanbul,
  1994) (B.~Hartley, G.~M. Seitz, A.~V. Borovik, and R.~M. Bryant, eds.),
  Springer Netherlands, 1995, pp.~401--450.

\bibitem{TRAUSTASON201980}
G.~Traustason and J.~Williams, \emph{Powerfully nilpotent groups}, J. Algebra
  \textbf{522} (2019), 80--100.

\bibitem{SglPPow1.1}
M.~Vaughan-Lee and B.~Eick, \emph{{SglPPow}, database of groups of prime-power
  order for some prime-powers, {V}ersion 1.1}, \href
  {http://www.icm.tu-bs.de/~beick/soft/sglppow/}
  {\texttt{http://www.icm.tu-bs.de/}\discretionary
  {}{}{}\texttt{\texttt{\symbol{126}}beick/}\discretionary
  {}{}{}\texttt{soft/}\discretionary {}{}{}\texttt{sglppow/}}, 2014, GAP
  package.

\bibitem{Williams2019}
J.~Williams, \emph{Normal subgroups of powerful $p$-groups}, Israel J. Math.
  (to appear), \href{https://arxiv.org/abs/1908.07030}{ar{X}iv:1908.07030}.

\bibitem{Williams2018}
\bysame, \emph{Omegas of agemos in powerful groups}, Int. J. Group Theory (to
  appear), \href{https://arxiv.org/abs/1811.00977}{ar{X}iv:1811.00977 }.

\bibitem{L2002}
L.~Wilson, \emph{On the power structure of powerful {p}-groups}, J. Group
  Theory \textbf{5} (2002), 129--144.

\bibitem{wilsonsphd}
\bysame, \emph{Powerful groups of prime power order}, Ph.D. thesis, University
  of Chicago, 2002.

\end{thebibliography}

\end{document}